\documentclass{gtart}


\usepackage{psfrag, graphicx, subfigure, epsfig,color}

\usepackage{amsmath, amssymb, latexsym, euscript}

\usepackage{mathptmx, wrapfig}


\def\lst{\mathcal{S}}
\def\tri{\mathcal{T}}
\def\lay{\mathcal{L}}
\def\even{\mathfrak{e}}
\def\odd{\mathfrak{o}}

\def\bkR{{\rm I\kern-.17em R}}
\def\R{\bkR}
\def\bkZ{{\rm Z\kern-.28em Z}}
\def\Z{\bkZ}

\DeclareMathOperator{\e}{e}
\DeclareMathOperator{\T}{T}

\newcommand{\abs}[1]{\lvert#1\rvert}



\theoremstyle{plain}
\newtheorem{theorem}{Theorem}
\newtheorem*{theorem*}{Theorem}
\newtheorem{lemma}[theorem]{Lemma}
\newtheorem{proposition}[theorem]{Proposition}
\newtheorem{corollary}[theorem]{Corollary}

\theoremstyle{definition}
\newtheorem{definition}[theorem]{Definition}
\newtheorem*{definition*}{Definition}

\newtheorem{conjecture}[theorem]{Conjecture}

\theoremstyle{remark}
\newtheorem{remark}[theorem]{Remark}

\numberwithin{equation}{section}


\begin{document}

\title{Minimal triangulations for an infinite family of lens spaces}
\author{William Jaco, Hyam Rubinstein and Stephan Tillmann}

\begin{abstract} 
The notion of a layered triangulation of a lens space was defined by Jaco and Rubinstein in \cite{JR:LT}, and, unless the lens space is $L(3,1),$ a layered triangulation with the minimal number of tetrahedra was shown to be unique and termed its \emph{minimal layered triangulation}. This paper proves that for each $n\ge2,$ the minimal layered triangulation of the lens space $L(2n,1)$ is its unique minimal triangulation. More generally, the minimal triangulations (and hence the complexity) are determined for an infinite family of lens spaces containing the lens spaces $L(2n,1).$
\end{abstract}

\primaryclass{57M25, 57N10}
\keywords{3--manifold, minimal triangulation, layered triangulation, efficient triangulation, complexity, lens space}
\makeshorttitle


\section{Introduction}

Given a closed, irreducible 3--manifold, its complexity is the minimum number of tetrahedra in a (pseudo--simplicial) triangulation of the manifold. This number agrees with the complexity defined by Matveev~\cite{Mat1990} unless the manifold is $S^3,$ $\R P^3$ or $L(3,1).$ Matveev's complexity of these three manifolds is zero, but the respective minimum numbers of tetrahedra are one, two and two. It follows from the definition that the complexity is known for all closed manifolds which appear in certain computer generated censuses. Moreover, the question of determining the complexity of a given closed 3-manifold has an algorithmic solution which uses the solution to the \emph{Homeomorphism Problem for 3--Manifolds} and, in general, is impractical. It has been an open problem to determine the complexity for an infinite family of closed manifolds. Two sided asymptopic bounds are given for two families of closed hyperbolic manifolds by Matveev, Petronio and Vesnin \cite{MPV} using hyperbolic volume. Other two-sided bounds using homology groups have been obtained by Matveev \cite{Mat1990}, \cite{Mat2005}. This paper determines minimal triangulations (and hence the complexity) for an infinite family of closed, irreducible 3--manifolds, which includes the following three families:

\begin{theorem}\label{thm:main}
The minimal layered triangulation of the lens space $L(2n,1),$ $n\ge 2,$ is its unique minimal triangulation. The complexity of $L(2n,1)$ is therefore $2n-3.$
\end{theorem}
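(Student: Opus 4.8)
The plan is to treat the upper and lower bounds for the complexity separately and then to promote the lower bound to the uniqueness statement. The upper bound is immediate: the minimal layered triangulation of $L(2n,1)$ produced in \cite{JR:LT} has exactly $2n-3$ tetrahedra, so the complexity is at most $2n-3$, and this triangulation is the candidate minimiser. Everything else reduces to showing that no triangulation of $L(2n,1)$ has fewer than $2n-3$ tetrahedra and that equality forces the layered triangulation. First I would restrict the triangulations under consideration: since for $n\ge2$ the manifold $L(2n,1)$ is closed, orientable and irreducible and is none of $S^3$, $\R P^3$ or $L(3,1)$, any minimal triangulation $\tri$ may be taken to be $0$--efficient with a single vertex. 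A one--vertex triangulation with $t$ tetrahedra then has exactly $t+1$ edges, each of which is a loop in the $1$--skeleton; this rigid skeleton is what the lower bound exploits.

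The geometric input for the lower bound is that $2n$ is even, so $L(2n,1)$ carries a canonical one--sided incompressible surface $S$ whose non--orientable genus grows linearly in $n$, with $\chi(S)$ of order $-n$; equivalently, the nontrivial class in $H_2(L(2n,1);\Z/2)\cong\Z/2$ has large least weight. I would place $S$ in least--weight normal form with respect to $\tri$. Since $\tri$ is $0$--efficient and $S$ is incompressible, $S$ meets no tetrahedron in a compressible or trivial pattern, and the numbers of its normal triangles and quadrilaterals are constrained by the edge weights and by the quadrilateral matching relations modulo $2$ carrying the homology class. The engine of the argument is then a counting inequality: each tetrahedron contributes only a bounded amount to $\abs{\chi(S)}$ and can house only one quadrilateral type, so a surface with $\abs{\chi(S)}$ of order $n$ forces $t$ to be of order $n$. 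Performing this count with the correct constants is what should yield the sharp bound $t\ge2n-3$.

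For uniqueness I would study the triangulated manifold obtained by cutting $\tri$ along the least--weight surface $S$. When $t=2n-3$ the surface must realise its minimal possible weight, which should pin down its combinatorial type inside each tetrahedron; the cut--open manifold together with the prescribed re--gluing then carries just enough structure to reconstruct a core piece and a uniquely determined sequence of layered tetrahedra. Matching this against the construction and uniqueness of the minimal layered triangulation in \cite{JR:LT} identifies $\tri$ with it. The same scheme should handle the larger family of lens spaces: the one--sided surface and its associated continued--fraction (Farey) data vary in a controlled way, and the linear lower bound together with the reconstruction argument should persist.

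I expect the principal obstacle to be the sharpness of the counting step. Obtaining a lower bound linear in $n$ is relatively soft, but extracting the exact constant $2n-3$ and, simultaneously, forcing $S$ to have a unique normal type in every tetrahedron --- so that the cut--open manifold is genuinely rigid --- will demand a careful analysis of the admissible triangle and quadrilateral patterns and of the interaction between $S$ and the loop edges of the one--vertex triangulation. That case analysis is where the bulk of the technical effort will lie.
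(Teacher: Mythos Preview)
Your proposal correctly identifies the key geometric input --- the unique one--sided incompressible surface $S_0$ dual to the nontrivial $\Z_2$--class --- and the reduction to one--vertex $0$--efficient triangulations. But the mechanism you suggest for the sharp lower bound does not work as stated, and the gap is exactly where you fear it is. A naive count of the form ``each tetrahedron contributes a bounded amount to $|\chi(S)|$'' gives at best $t\ge c\cdot n$ with $c<2$; in fact the canonical normal surface $S_\varphi$ in the paper's sense meets each tetrahedron in at most one normal disc, so $|\chi(S_\varphi)|\le t$ trivially, and since $\chi(S_0)=2-n$ this yields only $t\ge n-2$. Passing to a least--weight normal representative does not help: you lose control of the triangle count, and the weight along the loop edges is not tied to $\chi$ in a way that recovers the missing factor of two. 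So the ``counting inequality'' you describe, without further structure, cannot produce $t\ge 2n-3$.

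What the paper actually does is rather different and considerably more combinatorial. It does not use a least--weight representative at all, but the canonical surface $S_\varphi$ built from the $\varphi$--even/$\varphi$--odd colouring of edges; its Euler characteristic is computed via the complementary complex of $\varphi$--even edges, and this yields not a bound on $t$ directly but an inequality on the \emph{degree spectrum of the $\varphi$--even edges}: under the hypothesis $\even(\lay)\ge\odd(\lay)$ one gets $\even_3\ge 2+\sum_{j\ge 5}(j-4)\even_j$. The contradiction then comes from a detailed classification (Section~\ref{sec:low degree edges}) of how edges of degree $3$, $4$, $5$ can sit in a minimal $0$--efficient triangulation --- each degree--$3$ edge is the base of an $\lst_2$ subcomplex --- together with an analysis (Section~\ref{sec:Intersections of maximal layered solid tori}) of how maximal layered solid tori can intersect, a deficit/gain bookkeeping pairing degree--$3$ edges against higher--degree $\varphi$--even edges, and finally an edge--flip move on degree--$4$ octahedra to eliminate the residual cases. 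Uniqueness falls out of this contradiction argument rather than from any cut--and--reconstruct scheme along $S$. None of the layered--solid--torus structure or the low--degree edge classification is visible from your outline, and these are the ideas that convert the soft linear bound into the sharp one.
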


\begin{theorem}\label{thm:main2}
The minimal layered triangulation of $L( (s+2)(t+1)+1 , t+1),$ where $t>s>1,$ $s$ odd and $t$ even, is its unique minimal triangulation. The complexity of $L( (s+2)(t+1)+1 , t+1)$ is therefore $s+t.$
\end{theorem}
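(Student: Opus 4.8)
The plan is to prove matching upper and lower bounds on the complexity and then to pin down the equality case. Write $p=(s+2)(t+1)+1$ and $q=t+1$, and note that the parity hypotheses ($s$ odd, $t$ even) force $p$ to be even, so $L(p,q)$ is none of the exceptional manifolds $S^3,$ $\R P^3,$ $L(3,1)$. For the upper bound I would identify the minimal layered triangulation $\lay$ and count its tetrahedra. Here the continued fraction expansion is $p/q=[\,s+2,\,t+1\,]$, and running the Farey-path description of layered triangulations from \cite{JR:LT} along this expansion produces a layered triangulation with $(s+2)+(t+1)-3=s+t$ tetrahedra, giving $c(L(p,q))\le s+t$ at once.

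For the lower bound I would take an arbitrary minimal triangulation $\tri$ with $T$ tetrahedra and first normalise its combinatorics. Since $\tri$ is minimal it is $0$-efficient, and because $L(p,q)$ is irreducible and distinct from $S^3,\R P^3,L(3,1)$, this forces $\tri$ to have a single vertex; the relation $\chi(M)=0$ then gives exactly $T+1$ edges, each a loop at that vertex. I would also use the finer efficiency of minimal triangulations to exclude non-vertex-linking normal spheres and low-weight normal tori, so that normal surface theory on $\tri$ is tightly controlled.

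The heart of the argument, and the step I expect to be the main obstacle, is to convert the even order of $H_1$ into the numerical bound $T\ge s+t$. Since $p$ is even, $H_2(M;\Z/2)=\Z/2$ is nontrivial and is carried by an embedded one-sided normal surface $S$ in $\tri$. I would analyse the normal coordinates of $S$---in particular how its quadrilateral pieces and edge weights are distributed across the tetrahedra---and push through a counting argument relating $\abs{S}$ to $T$ and to the arithmetic of $p$ and $q$. Making this estimate sharp, and uniform over the whole family rather than case by case, is where the real difficulty lies; this is presumably where the detailed structure of the Farey path and the even/odd bookkeeping must re-enter.

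Finally, for uniqueness I would study the equality case $T=s+t$ directly. The aim is to show that a one-vertex $0$-efficient triangulation meeting the lower bound has its edges, faces and tetrahedra assembled in exactly the layered pattern, so that the extremal surface $S$ together with the constrained edge structure forces the gluings to reconstruct $\lay$ combinatorially. Extracting a unique gluing from equality in the counting bound is the other delicate point, and I expect this rigidity analysis to absorb the bulk of the technical work.
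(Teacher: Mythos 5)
Your overall architecture (upper bound from the layered triangulation read off the continued fraction $[s+2,t+1]$, lower bound and rigidity from a one-sided normal surface dual to the nontrivial class in $H^1(M;\Z_2)$) does match the paper's strategy in outline, and the upper-bound count $s+t$ is correct. But the two steps you yourself flag as ``the main obstacle'' and ``where the real difficulty lies'' are precisely the content of the theorem, and your sketch of them does not describe an argument that would work. In particular, the paper does not obtain the lower bound by estimating the weight or normal coordinates of the dual surface $S$ against $T$ and the arithmetic of $p,q$. Instead it colours the edges $\varphi$--even/$\varphi$--odd, classifies each tetrahedron into three types, and derives the identity $2C+B=2\even-2+2\chi(S_\varphi)$, which together with the Bredon--Wood/Rubinstein uniqueness of the incompressible one-sided surface (giving $\chi(S_\varphi(\tri))\le\chi(S_\varphi(\lay))=1-\even(\lay)$) and the hypothesis $\even(\lay)\ge\odd(\lay)$ yields the degree constraint $\even_3\ge 2+\sum_{j\ge5}(j-4)\even_j$. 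This is a statement about the degree distribution of $\varphi$--even edges, not a direct inequality $T\ge s+t$; the bound and the uniqueness are then proved simultaneously by contradiction, using the classification of neighbourhoods of edges of degree at most five, the analysis of how maximal layered solid tori can intersect, a deficit/gain count over the degree-three edges, and an edge-flip induction. None of this machinery is present or replaced in your proposal.

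A second concrete gap: you use the hypotheses ``$s$ odd, $t$ even'' only to conclude that $p$ is even. In this family that equivalence happens to hold, but the parity conditions do real work elsewhere: the whole method only applies when the minimal layered triangulation has at least as many $\varphi$--even edges as $\varphi$--odd edges, and verifying $\even(\lay)=\odd(\lay)$ for $L((s+2)(t+1)+1,t+1)$ requires the explicit bookkeeping of how the gluing $(s+2,s+1,1)\leftrightarrow(1,t+2,t+1)$ identifies even and odd edges of the two layered solid tori. Without establishing that count, the key inequality $\even_3\ge2+\sum(j-4)\even_j$ is not available, and the lower bound cannot be started. So the proposal identifies the right dual-surface idea but is missing the essential mechanism that converts it into a sharp bound and a rigidity statement.
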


\begin{theorem}\label{thm:main3}
The minimal layered triangulation of $L( (s+1)(t+2)+1 , t+2),$ where $t>s>1,$ $s$ even and $t$ odd, is its unique minimal triangulation. The complexity of $L( (s+1)(t+2)+1 , t+2)$ is therefore $s+t.$
\end{theorem}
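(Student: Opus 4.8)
The plan is to prove the theorem in three stages: an upper bound $c(L(p,q))\le s+t$ coming from the minimal layered triangulation, a matching lower bound $c(L(p,q))\ge s+t$, and a rigidity statement showing that the only triangulation realising $s+t$ tetrahedra is the minimal layered one. Throughout write $p=(s+1)(t+2)+1$ and $q=t+2$, and observe that the hypotheses force $p$ even and $q$ odd. The parity conditions ($s$ even, $t$ odd) are exactly what is needed to make the continued-fraction data below fall into the pattern treated here; the complementary parities give Theorem~\ref{thm:main2}, and the degenerate case $q=1$ gives Theorem~\ref{thm:main}. Since $p=(s+1)q+1$, one has $p/q=(s+1)+1/(t+2)$, a two-term continued fraction whose partial quotients record the two blocks of the layered construction.

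First I would make the upper bound explicit. Using the Farey-path description of layered triangulations from \cite{JR:LT}, I would build the minimal layered triangulation of $L(p,q)$ from the geodesic in the Farey tessellation determined by $p/q$: an initial layered solid torus whose core layerings are governed by $t+2$, capped off by the folding (or loop) identification governed by $s+1$. A direct count of the layering moves, one tetrahedron per move together with the initial block and the final identification, yields exactly $s+t$ tetrahedra. By \cite{JR:LT} this layered triangulation is unique and minimal among layered triangulations, so $c(L(p,q))\le s+t$.

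For the lower bound I would first reduce to a convenient class of competitors. Any minimal triangulation of $L(p,q)$ is $0$-efficient, hence (as $L(p,q)$ is none of $S^3,\ \R P^3,\ L(3,1)$) has a single vertex; with $V=1$ and $F=2T$ the Euler-characteristic relation $1-E+2T-T=0$ gives $E=T+1$. On such a triangulation I would run a normal-surface and edge-valence analysis. Because $p$ is even, $L(p,q)$ carries a one-sided incompressible Heegaard surface, and together with the homological constraint $H_1(L(p,q))=\Z/p$ the behaviour of the minimal-weight normal surfaces carrying this class forces a lower bound on $T$ in terms of the continued-fraction length and partial quotients of $p/q$, namely $T\ge s+t$. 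Combining the two bounds gives $c(L(p,q))=s+t$.

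The main obstacle is the lower bound together with the uniqueness claim, and I expect to spend most of the effort there. The difficulty is that one must rule out every triangulation with fewer than $s+t$ tetrahedra and then show that each triangulation attaining $s+t$ is combinatorially the layered model: this calls for a rigidity argument proving that the constraints extracted from $0$-efficiency, the normal-surface weights and the order of $H_1$ leave no freedom in how the tetrahedra are glued, so that the Farey path of the layered triangulation is recovered. I would carry this out by the same scheme as for Theorem~\ref{thm:main2}, adapting the parity-dependent bookkeeping to the case $s$ even, $t$ odd; the parity swap changes which block carries the loop identification but not the overall count $s+t$.
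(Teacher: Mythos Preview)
Your upper bound is fine: the continued fraction $p/q=[s+1,t+2]$ gives $E(p,q)=s+t+3$, hence the minimal layered triangulation has $s+t$ tetrahedra, and this matches the paper's description of the lens space as the union of $\lst_s$ and $\lst_t$ along their boundaries via $(s+2,s+1,1)\leftrightarrow(t+1,1,t+2)$.

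The lower bound and uniqueness, however, are not proved in your proposal; the sentence ``the behaviour of the minimal-weight normal surfaces carrying this class forces a lower bound on $T$ in terms of the continued-fraction length and partial quotients of $p/q$'' is precisely the assertion to be established, and no mechanism is given. The paper's argument is not a weight bound for a normal surface. What actually happens is this: the $\Z_2$--colouring of edges into $\varphi$--even and $\varphi$--odd partitions the tetrahedra into three types, and computing the Euler characteristic of the subcomplex spanned by the $\varphi$--even edges yields the identity $2C+B=2\even-2+2\chi(S_\varphi)$. Combining this with $\chi(S_\varphi)\le\chi(S_0)=1-\even(\lay)$ (the incompressible non-orientable surface is realised normally in the layered model) and with the hypothesis $\even(\lay)\ge\odd(\lay)$ --- which is exactly what the parity condition on $(s,t)$ buys you --- gives the sharp inequality
\[
\even_3 \;\ge\; 2 + \sum_{j\ge 5}(j-4)\,\even_j
\]
on the degrees of $\varphi$--even edges in \emph{any} minimal triangulation. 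This is the engine; nothing like it appears in your outline.

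From this inequality one does not immediately read off $T\ge s+t$. Instead one argues by contradiction that any minimal triangulation must be layered: every degree-three edge is $\varphi$--even and is the base-edge of an embedded $\lst_2$, hence sits in a maximal layered solid torus; a deficit/gain bookkeeping over these solid tori (using the classification of how maximal layered solid tori can intersect in a $0$--efficient triangulation, Lemmata~\ref{lem:intersection of maximal layered solid tori}--\ref{lem:type one meet in even}) shows the inequality fails unless the triangulation is layered, with a supplementary edge-flip argument to eliminate the residual cases. Your proposal has none of this structure: the classification of low-degree edges, the taxonomy of maximal layered solid tori, and the deficit/gain count are the actual content of the proof, and ``adapting the parity-dependent bookkeeping'' does not substitute for them.
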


Layered triangulations of lens spaces and minimal layered triangulations of lens spaces are defined and studied in \cite{JR:LT}. A brief review of layered triangulations of the solid torus and lens spaces is given in Section~\ref{sec:triangulations}. The above results imply that the following conjecture holds for infinite families of lens spaces:

\begin{conjecture}\cite{JR:LT}\label{conj:layered lens is minimal}
A minimal layered triangulation of a lens space is a minimal triangulation; moreover, it is the unique minimal triangulation except for $S^3,$ $\R P^3$ and $L(3,1)$. 
\end{conjecture}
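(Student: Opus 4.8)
The plan is to prove the full conjecture by establishing a sharp lower bound on the number of tetrahedra in an \emph{arbitrary} triangulation of a lens space, matching the upper bound supplied by the minimal layered triangulation of \cite{JR:LT}, and simultaneously forcing every minimiser to be layered. The upper bound and the uniqueness of the minimal layered triangulation are already available from \cite{JR:LT}, so everything reduces to the lower bound together with a rigidity statement saying that a triangulation attaining it must in fact be the minimal layered one. The exceptional spaces $S^3$, $\R P^3$ and $L(3,1)$ would be set aside and treated by hand, since for them minimality and/or uniqueness genuinely fail.

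First I would reduce to the simplest class of candidates. A minimal triangulation of a closed irreducible manifold other than $S^3$ is $0$-efficient and $1$-efficient, and such a triangulation of a lens space has a single vertex. From $\chi(L(p,q))=0$ and the fact that the number of $2$--faces is twice the number of tetrahedra, a one-vertex triangulation with $n$ tetrahedra has exactly $n+1$ edges. Since the total edge degree equals $6n$, the average edge degree is $6n/(n+1)$, just below $6$, so low-degree edges must occur and their number is controlled. The engine of the proof is then a local analysis around edges of small degree: degree-$1$ and degree-$2$ edges are excluded by efficiency, while an edge of low degree, together with the tetrahedra surrounding it, is forced into one of a short list of local models, each of which either simplifies the triangulation or exhibits an outermost \emph{layer} that can be stripped off.

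The heart of the argument is an induction that peels layers. Having isolated such a layer, I would show that removing it yields a triangulation of either a smaller lens space or of a solid torus, and that the tetrahedron count drops by exactly the amount predicted by one step of the continued-fraction / Stern--Brocot description of $p/q$ underlying the minimal layered triangulation. Running the induction down to the base cases (the core of a layered solid torus, and the small exceptional spaces above) would then give the exact lower bound, and the observation that equality forces each stripping step to be a genuine layering would yield the rigidity statement, hence uniqueness via \cite{JR:LT}. The two normal forms governed by the parities $\even$ and $\odd$ must be carried in parallel throughout, since both the optimal continued fraction and the boundary fold realising the lens space depend on these parities.

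The main obstacle is making the layer-peeling step work for arbitrary $q$, rather than for the structured families of Theorems~\ref{thm:main}--\ref{thm:main3}. For those families the continued fraction of $p/q$ is short and highly constrained, so the list of local edge models that can appear is small and the induction closes by direct inspection. For general $L(p,q)$ the continued fraction can be arbitrarily long with arbitrary partial quotients, so one must prove that the edge-degree analysis \emph{always} produces a strippable layer and that no non-layered configuration can ever undercut the layered count --- a uniform statement over an unbounded combinatorial search. Controlling this case explosion, most plausibly by extracting an efficient normal Heegaard torus or Klein bottle that splits the triangulation into two layered solid tori and then arguing about each side separately, is the step I expect to resist a purely combinatorial attack and to demand genuinely new input; this is precisely why the present paper settles only the infinite subfamilies and leaves the conjecture open in full.
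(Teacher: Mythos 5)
You were asked to prove a statement that the paper itself records as an open conjecture: the paper proves it only for the infinite families of Theorems~\ref{thm:main}, \ref{thm:main2} and \ref{thm:main3} (via Theorem~\ref{thm:really main}), so there is no ``paper's own proof'' to match, and your proposal does not close the gap either --- your final paragraph concedes that the decisive step ``demands genuinely new input.'' That concession is the gap, and it is worth naming precisely. Your induction needs two things: (i) that in an arbitrary minimal, $0$--efficient, one-vertex triangulation of $L(p,q)$ every low-degree edge either permits a simplification or exposes a strippable outermost layer, and (ii) that stripping it yields a triangulation of a smaller lens space or solid torus with the tetrahedron count dropping by exactly one continued-fraction step. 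Neither is established, and (i) is false as stated: the paper's classification of degree-four edges (Proposition~\ref{pro:degree four edges}) includes the model $X_{4;4}$, in which the edge is surrounded by an octahedron of four distinct tetrahedra; this configuration neither simplifies the triangulation nor sits as a layer on anything, and the paper must deal with exactly this case by an edge-flip re-triangulation argument inside the proof of Theorem~\ref{thm:really main}, not by peeling. Likewise (ii) is not an operation you get for free: a degree-three edge only guarantees an embedded copy of $\lst_2$ inside some maximal layered solid torus (Proposition~\ref{pro:degree three edges}, case (3c)), and the complement of such a subcomplex in an arbitrary minimal triangulation need not be a triangulated solid torus or smaller lens space, so ``removing a layer'' is not well defined at the level of generality your induction requires.

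It is also worth flagging that, even on the families where the paper succeeds, its mechanism is different from your sketch: there is no layer-peeling induction on the continued fraction. Instead the paper colours edges via the unique nontrivial $\varphi\co\pi_1(M)\to\Z_2$, builds the canonical dual normal surface $S_\varphi$, compares its Euler characteristic with the one-sided incompressible surface of Bredon--Wood/Rubinstein to obtain the inequality $\even_3 \ge 2 + \sum_{j\ge 5}(j-4)\even_j$ under the hypothesis $\even(\lay)\ge\odd(\lay)$, and then runs a global deficit/gain count over maximal layered solid tori (Section~\ref{sec:Intersections of maximal layered solid tori}), finishing with edge flips --- a counting-plus-rigidity argument rather than a descent. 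Your idea of extracting a normal Heegaard torus or Klein bottle splitting $M$ into two layered solid tori is appealing but appears in the paper only locally (e.g., the one-sided Klein bottle used to identify $X_{4;2}^1$ as $S^3/Q_8$), precisely because there is no known way to force such a splitting surface to be normal and to cut the given triangulation into \emph{sub}complexes. Two smaller inaccuracies: minimality implies $0$--efficiency except for $\R P^3$ and $L(3,1)$ (not ``other than $S^3$''), and $1$--efficiency plays no role here. As it stands, your text is an honest and reasonably well-aimed research plan, but it contains no proof of the conjecture.
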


In particular, given $L(p,q),$ where $(p,q)=1,$ $p>q>0$ and $p>3,$ the conjectured complexity is $E(p,q)-3,$ where $E(p,q)$ is the number of steps needed in the Euclidean algorithm (viewed as a subtraction algorithm rather than a division algorithm) to transform the unordered tuple $(p,q)$ to the unordered tuple $(1,0).$ Equivalently, $E(p,q)$ is the sum, $\sum n_i,$ of the ``partial denominators" in the continued fractions expansion of $p/q:$
$$\frac{p}{q} = n_0 + \cfrac{1}{n_1+\cfrac{1}{n_2+\cfrac{1}{n_3+ \cfrac{1}{\ddots}}}}=[n_0, n_1, n_2, n_3, \dots].$$
There is an analogous conjecture in terms of special spines due to Matveev~\cite{Mat1990} giving the same conjectured complexity. The minimal known special spine of $L(p,q)$ is dual to the minimal layered triangulation. Anisov~\cite{Ani, Ani01} has recently found a geometric characterisation of the former as the cut locus of a generic point when $L(p,q),$ $q\neq 1,$ is imbued with its standard spherical structure. In the case $q=1,$ the cut locus is not a special spine, but perturbing either the metric or the cut locus slightly gives the minimal known special spine. 

The above conjectures were made in the 1990s, and this paper gives the first infinite family of examples which satisfy them. The new ingredients used to address the conjectures are the following.

First, we study minimal and 0--efficient triangulations of closed, orientable 3--manifolds via the edges of lowest degree. Any such triangulation contains at least two edges of degree at most five, and the neighbourhood of an edge of degree no more than five is described in Section \ref{sec:low degree edges}.

Second, assume that $M$ is a closed, orientable 3--manifold which admits a non-trivial homomorphism $\varphi\co \pi_1(M) \to \Z_2.$ Each (oriented) edge, $e,$ in a one-vertex triangulation of $M$ represents an element, $[e],$ of $\pi_1(M)$ and is termed \emph{$\varphi$--even} if $\varphi [e]=0$ and \emph{$\varphi$--odd} otherwise. We exhibit a relationship between the number of $\varphi$--even edges, the number of tetrahedra and the Euler characteristic of a canonical normal surface dual to $\varphi$ in Section \ref{sec:co-homology classes}. 

Third, we analyse in Section~\ref{sec:Intersections of maximal layered solid tori} how layered triangulations of solid tori which sit as subcomplexes of minimal and 0--efficient triangulations can intersect. 

As an application of these results, we prove the following in Section~\ref{sec:minimal of L(2n,1)}:
\begin{theorem}\label{thm:really main} 
A lens space with even fundamental group satisfies Conjecture \ref{conj:layered lens is minimal} if it has a minimal layered triangulation such that there are no more $\varphi$--odd edges than there are $\varphi$--even edges. 
\end{theorem}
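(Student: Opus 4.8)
The overall strategy is to reduce the theorem to the single assertion that \emph{every minimal triangulation of $M=L(p,q)$ is a layered triangulation}. Granting this, the theorem follows at once from \cite{JR:LT}: among layered triangulations of $L(p,q)$ the minimal number of tetrahedra is $E(p,q)-3$ and is realised by a \emph{unique} layered triangulation (here $M\neq L(3,1),$ since $\pi_1(M)$ is even). Indeed, the minimal layered triangulation already exhibits a triangulation with $E(p,q)-3$ tetrahedra, so any minimal triangulation $T$ has $t\le E(p,q)-3$ tetrahedra; once $T$ is known to be layered, the reverse inequality holds and $T$ is forced to be the minimal layered triangulation. Thus all the content lies in showing that a minimal triangulation is layered.

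Let $T$ be a minimal triangulation of $M.$ Away from the small cases, which I would dispatch directly, $T$ is a one-vertex, $0$--efficient triangulation; it then has $t$ tetrahedra and $t+1$ edges, and the unique non-trivial $\varphi\co\pi_1(M)\to\Z_2$ splits these edges into $\even$ that are $\varphi$--even and $\odd$ that are $\varphi$--odd, with $\even+\odd=t+1.$ The first task is to understand the canonical normal surface $S$ dual to $\varphi,$ which carries the non-trivial class of $H_2(M;\Z_2)\cong\Z_2$ and meets each $\varphi$--odd edge once. Using the relationship of Section~\ref{sec:co-homology classes} between $\even,$ $t$ and $\chi(S),$ together with the bound $\chi(S)\le\chi_{\max}(M)$ valid for any one-vertex triangulation (with $\chi_{\max}(M)$ the maximal Euler characteristic of a surface in this class), I would translate the hypothesis into usable form. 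The point of assuming $\odd\le\even$ for the minimal layered triangulation is that this forces its canonical surface to be a most efficient representative of the class; feeding this through the Euler--characteristic relationship constrains the even/odd counts of $T$ and, decisively, bounds the number of quadrilateral normal disks that $S$ can contain.

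With the quadrilateral disks of $S$ under control, I would invoke Section~\ref{sec:low degree edges}: $T$ contains at least two edges of degree at most five, and the neighbourhood of such an edge is explicitly described. The crux is to use a lowest-degree edge $e$ and the $\varphi$--parities of $e$ and its neighbours to exhibit a tetrahedron that may be \emph{delaminated}, that is, to identify $e$ as a boundary edge of a layered solid torus sitting inside $T$ and to peel off the last layer. Removing this tetrahedron should yield a $0$--efficient, one-vertex triangulation of a smaller lens space (or of a solid torus), whose edge-parities and canonical surface are inherited from those of $T.$ The bound on quadrilateral disks is exactly what guarantees, at each stage, that such a delaminating tetrahedron exists rather than some obstruction, so that $S$ threads through the triangulation meeting each tetrahedron in a single normal disk and the whole of $T$ is uncovered as being built by layering.

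Finally, a lens space is two solid tori glued along a torus, so I must check that the layerings produced by this peeling fit together in the standard way, namely that $T$ is two layered solid tori amalgamated along their common boundary. This is where Section~\ref{sec:Intersections of maximal layered solid tori} enters: it controls how two maximal layered solid tori inside a minimal, $0$--efficient triangulation can intersect, ruling out the configurations that would prevent $T$ from being globally layered. I expect the principal obstacle to be precisely this delamination-and-reassembly step: \emph{a priori} a minimal triangulation carries no layered structure, and converting the purely local data of a low-degree edge together with its parity into the global conclusion that $T$ is layered is the heart of the matter. Controlling the quadrilateral disks of $S$ through the $\varphi$--even/$\varphi$--odd count is what makes each peeling step legitimate, so the argument genuinely requires the three ingredients of Sections~\ref{sec:low degree edges}, \ref{sec:co-homology classes} and \ref{sec:Intersections of maximal layered solid tori} to work in concert.
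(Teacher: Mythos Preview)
Your reduction to ``every minimal triangulation is layered'' is correct, and you are right that the hypothesis $\odd(\lay)\le\even(\lay)$ feeds through the Euler--characteristic identity of Section~\ref{sec:co-homology classes} to give a usable inequality for any minimal triangulation $\tri$. However, the output of that computation is not a bound on quadrilateral discs of $S_\varphi$; it is the lower bound
\[
\even_3(\tri)\ \ge\ 2+\sum_{j\ge 5}(j-4)\,\even_j(\tri)
\]
on the number of $\varphi$--even edges of degree three. This is what the paper actually exploits, and it does \emph{not} proceed by delamination.

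The delamination step is the genuine gap. Removing a tetrahedron from a minimal, $0$--efficient triangulation of a closed manifold does not in general produce a minimal (or even $0$--efficient) triangulation of anything to which an inductive hypothesis applies; in particular you lose the hypothesis $\odd(\lay)\le\even(\lay)$, which is a statement about the minimal layered triangulation of the \emph{original} lens space, not of whatever smaller object you land in. Nothing in Sections~\ref{sec:low degree edges}--\ref{sec:Intersections of maximal layered solid tori} supplies a peelable tetrahedron in a non-layered minimal triangulation, and there is no mechanism in your sketch that converts a low-degree edge into one.

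What the paper does instead is argue by contradiction. Assuming $\tri$ is not the minimal layered triangulation, every degree--three edge is the base-edge of an $\lst_2$ subcomplex sitting in a unique maximal layered solid torus (Proposition~\ref{pro:degree three edges} plus Lemma~\ref{lem:intersection of maximal layered solid tori}). One then runs a bookkeeping argument: partition the set of degree--three edges according to these maximal layered solid tori, assign to each part a specific $\varphi$--even edge of degree $\ge 5$, and compare the ``gain'' on the left of the displayed inequality with the ``deficit'' on the right. Type--three tori and most type--one tori produce a net deficit; the residual type--one tori have $\varphi$--even univalent boundary edge of degree four in four distinct tetrahedra (Lemma~\ref{lem:layered lens char 2}), and these are eliminated by a $4\!\to\!4$ \emph{edge flip} that keeps $|\tri|$ fixed but strictly shrinks the residual set (or the number of type--three tetrahedra). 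Iterating yields a minimal non-layered triangulation violating the inequality, the desired contradiction. The edge flip, not any peeling, is the device that replaces your proposed induction.
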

It is then shown that this implies that a lens space never has more $\varphi$--even edges than $\varphi$--odd edges, and that a lens space satisfies this condition if it is contained in one of the families given in Theorems \ref{thm:main}, \ref{thm:main2} or \ref{thm:main3}. Moreover, a complete description of the set of all lens spaces to which Theorem \ref{thm:really main} applies is given in Figure \ref{fig:L-graph}.

The first author is partially supported by NSF Grant DMS-0505609 and the Grayce B. Kerr Foundation. The second and third authors are partially supported under the Australian Research Council's Discovery funding scheme (project number DP0664276).


\section{Layered triangulations}
\label{sec:triangulations}

This section establishes notation and basic definitions, and describes a family of layered triangulations of the solid torus, denoted $\lst_k = \{1, k+1, k+2\},$ as well as the minimal layered triangulation of the lens space $L(k+3,1),$ $k \ge 1.$ A consequence of Theorem~\ref{thm:main} concerning minimal triangulations of the solid torus subject to a specified triangulation of the boundary is also given (see Corollary \ref{cor:main thm}).


\subsection{Triangulations}

The notation of \cite{JR, JR:LT} will be used in this paper. A triangulation, $\tri,$ of a 3--manifold $M$ consists of a union of pairwise disjoint 3--simplices, $\widetilde{\Delta},$ a set of face pairings, $\Phi,$ and a natural quotient map $p\co \widetilde{\Delta} \to \widetilde{\Delta} / \Phi = M.$ Since the quotient map is injective on the interior of each 3--simplex, we will refer to the image of a 3--simplex in $M$ as a \emph{tetrahedron} and to its faces, edges and vertices with respect to the pre-image. Similarly for images of 2-- and 1--simplices, which will be referred to as \emph{faces} and \emph{edges} in $M.$ For edge $e,$ the number of pairwise distinct 1--simplices in $p^{-1}(e)$ is termed its \emph{degree}, denoted $d(e).$ If an edge is contained in $\partial M,$ then it is termed a \emph{boundary edge}; otherwise it is an \emph{interior edge}.


\subsection{Minimal and 0--efficient triangulations}

A triangulation of the closed, orientable, connected 3--manifold $M$ is termed \emph{minimal} if $M$ has no other triangulation with fewer tetrahedra. A triangulation of $M$ is termed \emph{0--efficient} if the only embedded, normal 2--spheres are vertex linking. It is shown by the first two authors in \cite{JR} that (1) the existence of a 0--efficient triangulation implies that $M$ is irreducible and $M\neq \R P^3,$ (2) a minimal triangulation is 0--efficient unless $M=\R P^3$ or $L(3,1),$ and (3) a 0--efficient triangulation has a single vertex unless $M=S^3$ and the triangulation has precisely two vertices. These facts will be used implicitly.


\subsection{Layered triangulations of the solid torus and lens spaces}
\label{sec:Layered triangulations of the solid torus and lens spaces}

Layering along a boundary edge is defined in \cite{JR:LT} and illustrated in Figure \ref{fig:layering}. Namely, suppose $M$ is a 3--manifold, $\tri_\partial$ is a triangulation of $\partial M$, and $e$ is an edge in $\tri_\partial$ which is incident to two distinct faces. We say the 3--simplex $\sigma$ is \emph{layered along the (boundary) edge} $e$ if two faces of $\sigma$ are paired, ``without a twist," with the two faces of $\tri_\partial$ incident with $e$. The resulting 3--manifold is homeomorphic with $M$. If $\tri_\partial$ is the restriction of a triangulation $\tri$ of $M$ to $\partial M$, then we get a new triangulation of $M$ and denote it $\tri\cup_e \sigma$.

\begin{figure}[t]
\psfrag{M}{{\small $M$}}
\psfrag{S}{{\small $\Delta$}}
\psfrag{e}{{\small $e_1$}}
\psfrag{f}{{\small $e_2$}}
\psfrag{g}{{\small $e_3$}}
\begin{center}
    \subfigure[Layering on a boundary edge]{\label{fig:layering}
      \includegraphics[height=3.6cm]{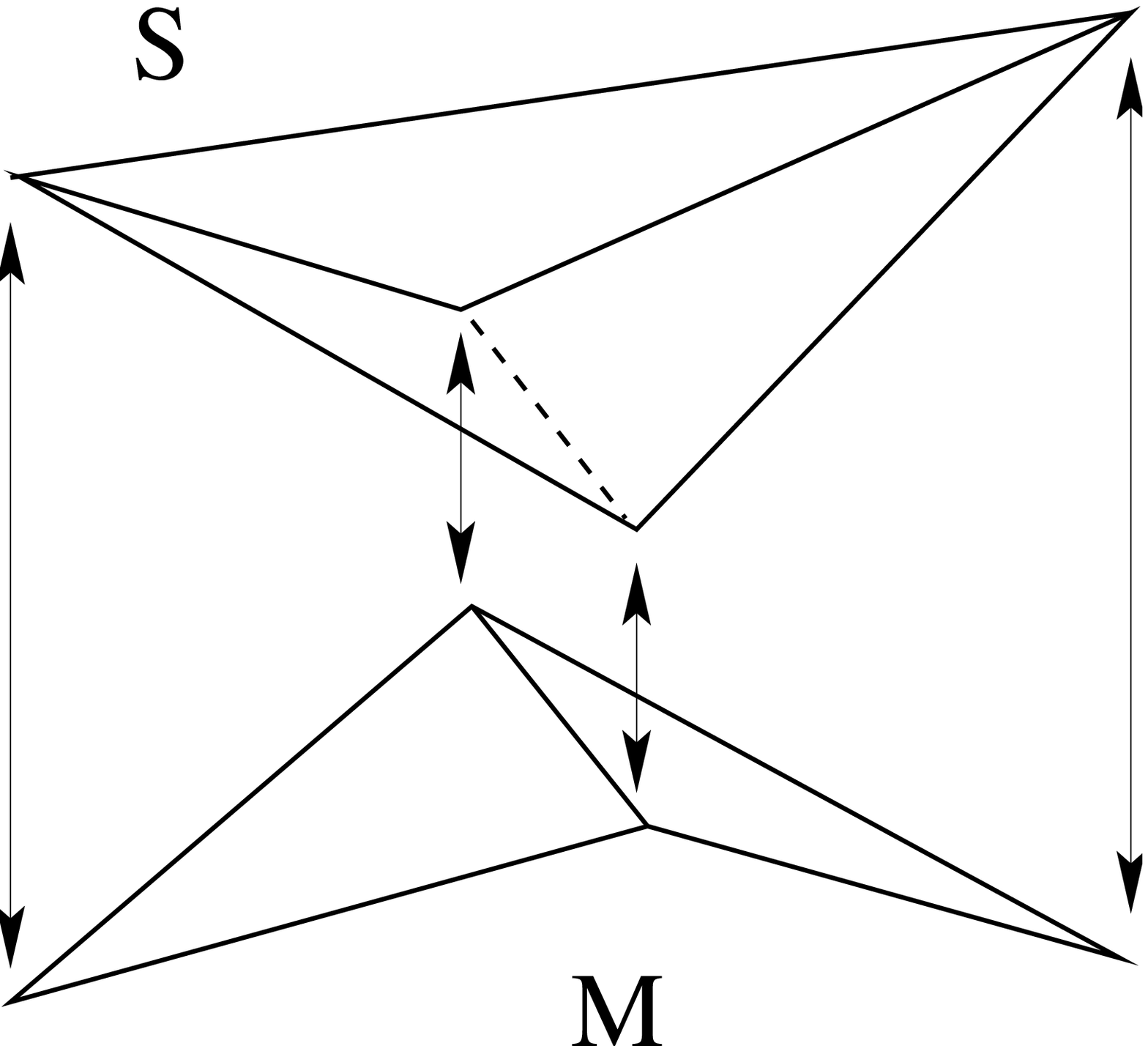}
    } 
    \qquad\qquad
    \subfigure[The solid torus $\lst_1$]{\label{fig:solid_torus}
      \includegraphics[height=3.6cm]{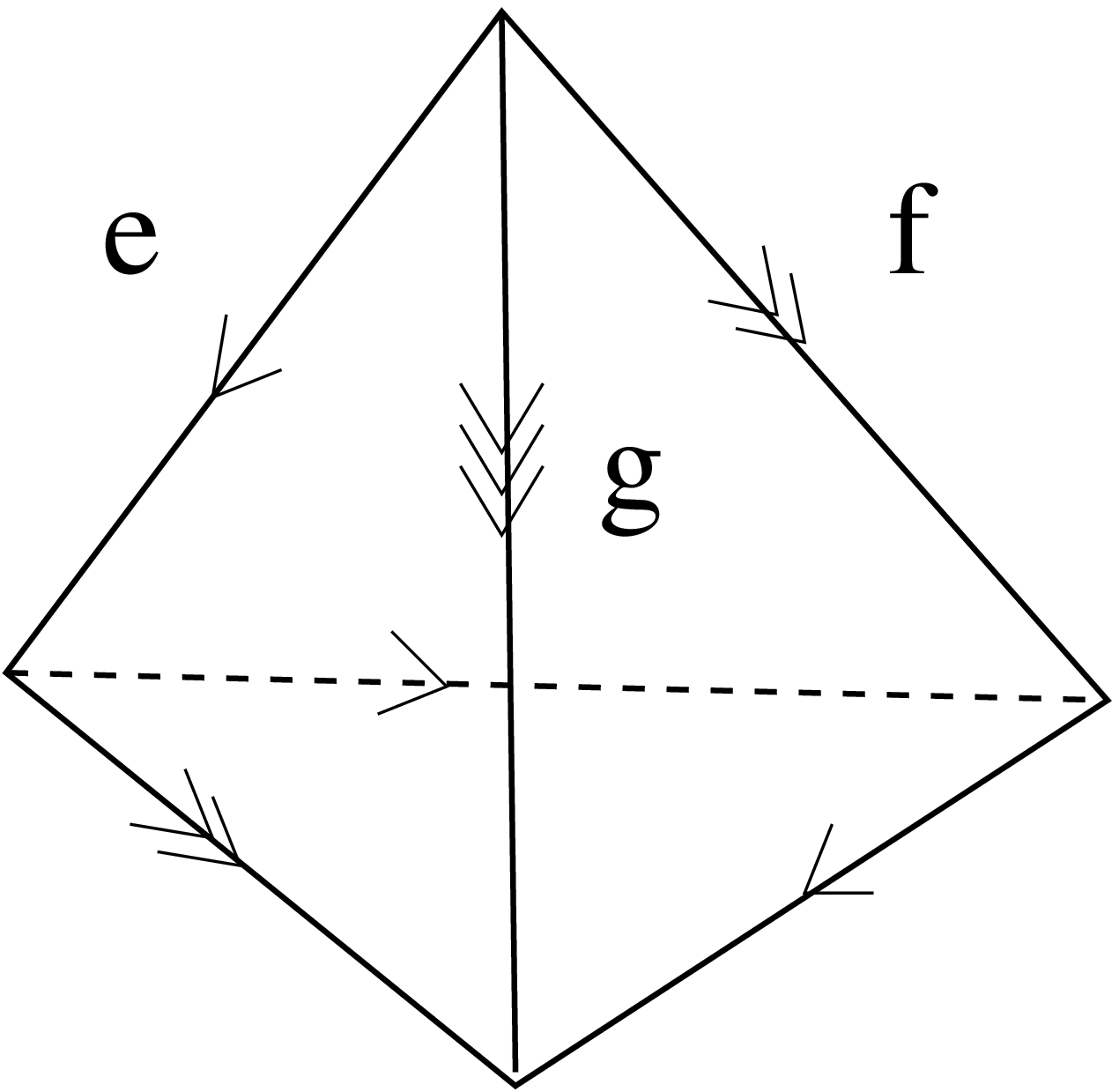}
    } 
\end{center}
    \caption{Layered triangulation of the solid torus}
     \label{fig:regular exchange is associative}
\end{figure}

Starting point for a layered triangulation of a solid torus is the one-tetrahedron triangulation of the solid torus shown in Figure \ref{fig:solid_torus}, where the two back faces are identified as indicated by the edge labels. One can then layer on any of the three boundary edges (see \cite{JR:LT}), giving a layered triangulation of the solid torus with two tetrahedra. Inductively, a layered triangulation of a solid torus with $k$ tetrahedra is obtained by layering on a boundary edge of a layered triangulation with $k-1$ tetrahedra. A layered triangulation of a solid torus with $k$ tetrahedra has one vertex, $2k+1$ faces, and $k+2$ edges; the vertex, three edges, and two faces are contained in the boundary. There is a special edge in the boundary having degree one; it is called the \emph{univalent edge}.

Given a solid torus with a one-vertex triangulation on its boundary, it follows from an Euler characteristic argument that this triangulation has two faces meeting along three edges. There is a unique set of unordered nonnegative integers, $\{p,q,p+q\}$, associated with the triangulation on the boundary and determined from the geometric intersection of the meridional slope of the solid torus with the three edges of the triangulation. Two such triangulations with numbers $\{p,q,p+q\}$ and $\{p',q',p'+q'\}$ can be carried to each other via a homeomorphism of the solid torus if and only if the two sets of numbers are identical. There are
three possible ways to identify the two faces in the boundary, each giving a lens space. These identifications can be thought of as ``folding along an edge in the boundary", which is also referred to as ``closing the book" with an edge as the binding. Folding along the edge $p$ gives the lens space $L(2q+p,q)$; along the edge $q$ the lens space $L(2p+q,p)$; and along the edge $p+q$ the lens space $L(\abs{p-q},p)$. If the solid torus is triangulated, then after identification one obtains a triangulation of the lens space. If the triangulation of the solid torus is a layered triangulation of the solid torus, then the induced triangulation of the lens space is termed a \emph{layered triangulation} of the lens space. A layered triangulation of a lens space having the the minimal number of tetrahedra is called a \emph{minimal layered triangulation} of the lens space. It is shown in \cite{JR:LT}, that ---except for the lens space $L(3,1)$--- a lens space has a unique minimal layered triangulation. It is conjectured, see Conjecture~\ref{conj:layered lens is minimal}, that a minimal layered triangulation is minimal.

There is a companion conjecture for triangulations of the solid torus. If $\tri_{\partial}$ is a triangulation on the boundary of a compact $3$--manifold $M$, then a triangulation $\tri$ of $M$ having all its vertices in ${\partial} M$ and agreeing with $\tri_{\partial}$ on $\partial M$ is called an \emph{extension of $\tri_\partial$} to $M.$ A one-vertex triangulation on the boundary of a solid torus can be extended to a layered triangulation of the solid torus. There are infinitely many ways to extend the triangulation on the boundary; however, it is shown in \cite{JR:LT} that there is a unique layered extension having the minimal number of tetrahedra; termed the minimal layered extension. The companion conjecture to that given above is that the minimal layered extension is the minimal extension; a first infinite family satisfying this companion conjecture is given in Corollary~\ref{cor:main thm}.


\subsection{The minimal layered extension of $\mathbf{\{k+2,k+1,1\}}$}
\label{sec:minimal layered extension}

We now describe the combinatorics of a layered triangulation of the solid torus extending the $\{k+2,k+1,1\}$ triangulation on the boundary of the solid torus. It is shown below that this is the minimal layered extension. This family of layered triangulations can be characterized by the fact that there is a unique interior edge of degree three and all other interior edges have degree four. Denote by $\lst_1$ the triangulation of the solid torus with one tetrahedron, $\Delta_1,$ with edges labelled and oriented as shown in Figure \ref{fig:solid_torus}. A 3--simplex, $\Delta_2,$ is layered on edge $e_2;$ there is a unique edge, labelled $e_4,$ which is not identified with any of $e_1, e_2, e_3.$ The edge $e_4$ is oriented such that its incident faces give the relation $e_4 = e_1 + e_3.$ Denote the resulting triangulated solid torus by $\lst_2.$ Inductively, $\Delta_k$ is layered on $e_k;$ the new edge is labelled $e_{k+2}$ with relation $e_{k+2} = e_1 + e_{k+1}.$ The resulting triangulated solid torus is denoted $\lst_k.$

Identify the fundamental group of the solid torus with $\Z.$ Analysing the meridian disc in the triangulation consisting of just $\Delta_1,$ and then taking into account the relations given from faces, one obtains:
$$[e_k] = k \in \Z,$$
with the given orientation conventions for edges. The edges are accordingly termed \emph{odd} or \emph{even}. When $k=2,$ we have $d(e_1)=5, d(e_2)=d(e_3)=3$ and $d(e_4)=1.$ After layering $k$ tetrahedra, $k \ge 3,$ the degrees of edges are as follows:
\begin{table}[h]
\begin{center}
\begin{tabular}{l | l | l | l | l | l}
edge		& $e_1$	&$e_2$	&$e_3$ to $e_k$	& $e_{k+1}$	& $e_{k+2}$ \\
\hline
degree	& $2k+1$	&3		&4				& 3			&1\\
\end{tabular}
\end{center}
\end{table}


\subsection{The minimal layered triangulation of the lens space $\mathbf{L(k+3, 1)}$}

In the above construction, at stage $k,$ one can choose not to add $\Delta_{k+1},$ but instead to fold along the edge $e_{k+1}$, identifying the two faces and identifying the edge $e_{k+2}$ with $-e_1$. As noted above, one obtains a layered triangulation of the lens space $L(k+3,1)$ with fundamental group $\Z_{k+3}.$ This is in fact the minimal layered triangulation of $L(k+3,1)$ as can be deduced from the continued fraction expansion. This also implies that the above extension of $\{k+2,k+1,1\}$ is the minimal layered extension. The previous discussion shows that there are $k+1$ edges, and their degrees are as follows if $k\ge3$:
\begin{table}[h]
\begin{center}
\begin{tabular}{l | l | l | l | l}
edge		& $e_1$	&$e_2$	&$e_3$ to $e_k$	& $e_{k+1}$ \\
\hline
degree	& $2k+2$	&3		&4				& 3\\
\end{tabular}
\end{center}
\end{table}

When $k=2,$ there are two edges of degree three and one of degree six, when $k=1,$ there is one of degree four and one of degree two.

This triangulation of $L(k+3, 1)$ is denoted $\lay=\lay_k.$ If $k$ is odd, then having $e_{k+2}$ identified with $-e_1$, each of the $k+1$ edges in $\lay_k$ can be termed \emph{odd} or \emph{even} without ambiguity and with consistency with their designation as ``odd" or ``even" in $\lst_k.$ Letting $n=\frac{k+3}{2},$ we have $L(k+3, 1)=L(2n,1),$ where $n\ge 2.$ In the sequel, we write $L(k+3, 1)=L(2n,1)$ to indicate that $n\ge 2$ and $k$ is an odd, positive integer.

Theorem~\ref{thm:main} states that $\lay_k$ is the unique minimal triangulation of $L(k+3, 1),$ $k\ge 1$ odd. This has the following consequence:

\begin{corollary}\label{cor:main thm}
The minimal layered extension of the triangulations $\{k+2,k+1,1\},$ $k\ge 1,$ on the boundary of the solid torus is the unique minimal extension.
\end{corollary}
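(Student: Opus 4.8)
The plan is to trade the extension problem on the solid torus for the triangulation problem on the lens space by ``closing the book''. Let $\tri'$ be any extension of $\{k+2,k+1,1\}$ to the solid torus, with $t$ tetrahedra. Folding the two boundary faces together along the degree-three boundary edge $e_{k+1}$ (the binding) neither creates nor destroys tetrahedra; it identifies the two remaining boundary edges $e_1$ and $e_{k+2}$ with one another and yields a one-vertex triangulation $\widehat{\tri'}$ of $L(k+3,1)$ with exactly $t$ tetrahedra. When $k$ is odd this lens space is $L(2n,1)$ with $n=\tfrac{k+3}{2}\ge 2$, and by the construction in Section~\ref{sec:minimal layered extension} the fold of $\lst_k$ along $e_{k+1}$ is precisely $\lay_k$.

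For the lower bound, Theorem~\ref{thm:main} gives that the complexity of $L(2n,1)$ is $2n-3=k$. Hence every extension $\tri'$ of $\{k+2,k+1,1\}$ satisfies $t\ge k$, while $\lst_k$ realises $t=k$. Thus $\lst_k$ is a minimal extension and every minimal extension has exactly $k$ tetrahedra. For uniqueness, let $\tri'$ be a minimal extension, so $t=k$ and $\widehat{\tri'}$ is a minimal triangulation of $L(2n,1)$; by Theorem~\ref{thm:main} this forces $\widehat{\tri'}=\lay_k$. It then remains to reverse the folding: $\tri'$ is recovered from $\lay_k$ by locating the face pair that is identified under the fold, re-separating it, and checking that the induced boundary triangulation is $\{k+2,k+1,1\}$ and that the reconstructed triangulated solid torus is $\lst_k$.

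The main obstacle is exactly this unfolding step. A priori the rigid object $\lay_k$ could carry several ``book'' structures---several choices of binding edge together with a face pair identified without a twist---whose unfoldings are solid tori with possibly different boundary triangulations, and one must exclude every unfolding except the one returning $\{k+2,k+1,1\}$ and $\lst_k$. I expect this to follow from the combinatorial rigidity recorded in the degree table of Section~\ref{sec:minimal layered extension}: the binding of a fold must be an edge of degree three along which two distinct faces are glued without a twist, and the edge and vertex identifications of $\lay_k$ leave essentially no freedom, so the admissible unfolding is forced. The argument thereby reduces to tracking the face- and edge-identifications of $\lay_k$ and verifying there is a single compatible un-identification.

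Finally, the folding reduction shows that the corollary for a given $k$ is equivalent to the statement that the minimal triangulation of $L(k+3,1)$ is unique, which Theorem~\ref{thm:main} supplies for odd $k$. For even $k$ the lens space $L(k+3,1)$ has odd order and is not covered directly; I would handle these cases by the same folding mechanism once the complexity and uniqueness for the relevant odd-order lens space are known, obtained from the paper's further determinations of minimal triangulations where they apply. I regard the unfolding rigidity as the genuinely delicate point, with this coverage issue a secondary matter of bookkeeping.
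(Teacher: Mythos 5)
Your reduction for odd $k$ is essentially the paper's own argument: fold along the degree-three boundary edge to get a triangulation of $L(k+3,1)$ with the same number of tetrahedra, invoke Theorem~\ref{thm:main} for the lower bound and for the identification of the folded triangulation with $\lay_k$, then unfold. The unfolding step you flag as the delicate point is settled in the paper just as you anticipate: a minimal extension $E$ with $k$ tetrahedra folds to $\lay_k$, so $E$ must be $\lay_k$ split open along a face; exactly two faces of $\lay_k$ split open to yield the minimal layered extension of $\{k+2,k+1,1\}$, and splitting open along any other face produces a boundary which is a pinched $2$--sphere rather than a torus, so it cannot be an extension at all. So for odd $k$ your outline is correct, with that one combinatorial check left asserted rather than carried out.

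The genuine gap is the even case. You propose to run the same folding argument once ``the complexity and uniqueness for the relevant odd-order lens space are known, obtained from the paper's further determinations \dots where they apply.'' They do not apply: for $k$ even, $L(k+3,1)$ has odd fundamental group, the $\Z_2$--machinery of the paper is unavailable, and the uniqueness of its minimal triangulation is exactly the still-open Conjecture~\ref{conj:layered lens is minimal}. So this is not bookkeeping --- your argument as written only covers odd $k$. The paper closes the even case with a different idea that stays on the solid-torus side: if there were an extension of $\{k+2,k+1,1\}$ with at most $k$ tetrahedra other than the minimal layered extension, layering one further tetrahedron on the appropriate boundary edge produces an extension of $\{k+3,k+2,1\}$ with at most $k+1$ tetrahedra which is again not the minimal layered extension (removing the unique tetrahedron of $\lst_{k+1}$ meeting both boundary faces returns $\lst_k$, so the layered object could only be $\lst_{k+1}$ if one started from $\lst_k$), contradicting the already-established odd case $k+1$. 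Without some such step your proof does not establish the corollary for even $k$.
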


\begin{proof}
Let $k \ge 1$ be odd. Suppose there is a minimal extension, $E,$ of $\{k+2,k+1,1\}$ which is not the minimal layered extension. Folding along the edge $k+1$ gives a triangulation of $L(k+3, 1)$ with $k$ or fewer tetrahedra. Since $\lay_k$ is the unique minimal triangulation of $L(k+3, 1),$ it follows that $E$ contains precisely $k$ tetrahedra and that it is obtained by splitting $\lay_k$ open along a face. Now in $\lay_k$ there are precisely two faces along which one can split open to obtain the minimal layered extension of $\{k+2, k+1, 1\},$      
hence not $E.$ Splitting open along any other face gives boundary a pinched 2--sphere, hence again not $E.$ We thus arrive at a contradiction.

If $k$ is even, assume that there is an extension of $\{k+2,k+1,1\}$ with $k$ or fewer tetrahedra which is not the minimal layered extension. One may layer another tetrahedron on this to get an extension of $\{(k+1)+2,(k+1)+1,1\}$ with $k+1$ or fewer tetrahedra and which  is not the minimal layered extension. This contradicts the above.
\end{proof}


\section{Edges of low degree in minimal triangulations}
\label{sec:low degree edges}

Given any 1--vertex triangulation of a closed 3--manifold, let $E$ denote the number of edges, $E_i$ denote the number of edges of degree $i,$ and $T$ denote the number of tetrahedra. Then $E = T+1,$ and hence $6 = \sum (6-i)E_i.$ It follows that there are at least two edges of degree at most five. The new contribution in this section is an analysis of edges of degree four and five in minimal and 0--efficient triangulations. For edges of degree at most three, this has been done previously by the first two authors. For convenience, Proposition~6.3 of \cite{JR} is stated (with minor corrections) as Propositions~\ref{pro:degree one and two edges} and \ref{pro:degree three edges} in Subsection~\ref{subsec:lowest degree}. A new proof is given for the case of degree three edges which generalises to degrees four and five. 

As a simple application of below characterisation of low degree edges, the reader may find pleasure in determining all closed, orientable, connected and irreducible 3--manifolds having a minimal and 0--efficient triangulation with at most three tetrahedra.


\subsection{Lowest degree edges}
\label{subsec:lowest degree}

\begin{lemma}\label{lem:dunce gives s3}
Suppose the closed, orientable, connected 3--manifold $M$ has a 0--efficient triangulation. If there is a face in $M$ which is a dunce hat, then $M=S^3.$
\end{lemma}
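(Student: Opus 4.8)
The plan is to study a regular neighborhood of the dunce hat, extract an embedded $2$--sphere, and then force $M=S^3$ using $0$--efficiency and irreducibility.

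First I would fix the combinatorial picture. Since a $0$--efficient triangulation of a closed manifold other than $S^3$ has a single vertex, I may assume there is one vertex $v$ (the two--vertex $S^3$ case being the desired conclusion). Let $\sigma$ be the face that is a dunce hat; its three edges are all identified to a single edge $e$ and its three corners to $v,$ and the attaching word of $\sigma$ along $e$ is $e\,e\,e^{-1}.$ Writing $K=\{v\}\cup e\cup\sigma,$ this says exactly that $K$ is the dunce hat: it is contractible, with $\pi_1(K)=1$ and $\chi(K)=1-1+1=1.$

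Next I would pass to the frontier $S=\partial N$ of a regular neighborhood $N=N(K).$ As $N$ collapses onto $K$ it is simply connected, so each component of $\partial N$ is a $2$--sphere (a component of positive genus would survive in $H_1(N)=0$ by the half--lives--half--dies principle); and since $\chi(S)=2\chi(N)=2\chi(K)=2,$ the surface $S$ is a single $2$--sphere. I would then normalize $S$ in the triangulation. Because $S$ is an embedded $2$--sphere and the triangulation is $0$--efficient, the outcome is a vertex--linking normal sphere, so $S$ is isotopic to the vertex link $S_v.$ In particular $N,$ the side of $S$ containing $v,$ is a ball.

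The final step is to deduce $M=S^3,$ and this is where I expect the real difficulty. Irreducibility (which holds since the triangulation is $0$--efficient) guarantees that $S$ bounds a ball in $M.$ If that ball is the complementary region $M'=\overline{M\setminus N},$ then $M=N\cup_S M'$ is a union of two simply connected pieces glued along a $2$--sphere, hence simply connected, so $M=S^3.$ The obstacle is to rule out the alternative, in which $S$ bounds a ball only on the side $N$ and $M'$ carries nontrivial topology; merely knowing that $N$ is a ball and that $S$ is vertex--linking does not by itself settle this, since capping off $M'$ with a ball need not recognize its homeomorphism type. To close this gap I would argue that the dunce hat is in fact a spine of the once--punctured manifold, equivalently that $M'$ is a collar $S\times I.$ The natural way to establish this is combinatorial: analyze the two tetrahedra incident to $\sigma$ and the cyclic arrangement of faces around $e$ (the face $\sigma,$ having all three edges equal to $e,$ occupies three slots in the link cycle of $e$), and show that the identifications forced by the dunce--hat pattern, together with $0$--efficiency, leave no room for essential topology outside $N.$ I expect this local analysis, rather than the regular--neighborhood bookkeeping, to be the crux of the proof.
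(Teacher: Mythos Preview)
Your approach via the regular neighbourhood of the dunce hat is different from the paper's, and it contains a genuine misdirection in the normalisation step that is worth correcting, because once corrected it actually dissolves the ``obstacle'' you identify.

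First, the step ``normalize $S$ \ldots\ the outcome is a vertex--linking normal sphere, so $S$ is isotopic to $S_v$'' is not valid as stated: normalisation of a $2$--sphere may involve surgeries, not just isotopies, so $0$--efficiency tells you only that any \emph{normal} $2$--sphere is vertex linking, not that your $S$ is isotopic to one. Even granting an isotopy $S\simeq S_v$, the conclusion ``the side of $S$ containing $v$ is a ball'' does not follow, since an ambient isotopy need not fix $v$.

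Second, and more to the point, the barrier argument runs the other way. The dunce hat $K$ is a subcomplex containing the unique vertex $v$, so $\partial N(K)=S$ is a barrier surface and one shrinks $S$ into the \emph{complement} $M'=\overline{M\setminus N}$. Since $v\notin M'$, no vertex--linking sphere lies in $M'$; by $0$--efficiency $S$ therefore bounds a $3$--ball in $M'$. (This is exactly the mechanism the paper uses elsewhere, e.g.\ case~(5) in the proof of Proposition~\ref{pro:degree three edges}.) Thus $M'$ is a ball --- precisely the conclusion you flagged as the crux and proposed to attack by an unspecified local analysis of the tetrahedra around $e$. That local analysis is unnecessary. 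With $M'$ a ball and $N$ contractible, $\pi_1(M)=1$, and one finishes with the Poincar\'e conjecture (or the classical fact that a $3$--dimensional regular neighbourhood of the dunce hat is a ball).

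The paper's own proof is shorter and avoids this machinery: since the dunce hat is contractible, the single edge $e$ bounding it is null--homotopic, hence bounds an embedded disc by Dehn's lemma; then \cite{JR}, Proposition~5.3, gives $M=S^3$ directly from the existence of an edge bounding an embedded disc in a $0$--efficient triangulation. Your route, once repaired, is a legitimate alternative, but it trades a one--line appeal to Dehn's lemma and a cited result for the barrier/shrinking package plus Poincar\'e.
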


\begin{proof}
The face which is a dunce hat is bounded by a single edge. Since a dunce hat is contractible, the edge bounds an immersed disc in $M.$ Dehn's lemma (as stated in \cite{J}, I.6) implies that the edge bounds an embedded disc. Now \cite{JR}, Proposition~5.3, implies that $M=S^3.$
\end{proof}

\begin{proposition}[Edges of degree one or two]\cite{JR}\label{pro:degree one and two edges}
A minimal and 0--efficient triangulation $\tri$ of the closed, orientable, connected and irreducible 3--manifold $M$ has
\begin{enumerate}
\item[(1)] no edge of order one unless $M=S^3,$ and
\item[(2)] no edge of order two unless $M=L(3,1)$ or $L(4,1).$
\end{enumerate}
\end{proposition}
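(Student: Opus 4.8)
The plan is to analyse the small block of tetrahedra surrounding a low--degree edge and to feed the resulting $2$--spheres and discs into the irreducibility of $M$ (which holds because $M$ carries a $0$--efficient triangulation), together with Dehn's lemma and \cite{JR}, Proposition~5.3, exactly as in the proof of Lemma~\ref{lem:dunce gives s3}. For an edge $e$ I would write $N(e)$ for the union of the tetrahedra incident with $e,$ so that $e$ lies in the interior of $N(e);$ the faces of these tetrahedra that contain $e$ are paired across $e,$ while the faces that do not contain $e$ assemble, after the induced identifications, into the boundary $2$--complex of $N(e).$ The whole argument then amounts to understanding how these outer faces are identified.

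Consider first $d(e)=1.$ Then $e$ lies in a single tetrahedron $\sigma,$ and its two incident faces must be folded onto one another along $e,$ since any other pairing would place a second $1$--simplex in $p^{-1}(e).$ There are two such folds, and I would treat each. In one, the identifications induced on the two faces of $\sigma$ not containing $e$ fold each of them into a cone, and these cones meet along the image of the edge of $\sigma$ opposite $e;$ this opposite edge therefore bounds an immersed disc, hence an embedded disc by Dehn's lemma, and \cite{JR}, Proposition~5.3, gives $M=S^3.$ In the other fold the two outer faces of $\sigma$ are instead glued to each other along all three of their edges, producing an embedded $2$--sphere; since $M$ is irreducible this sphere bounds a ball, and as $N(e)=\sigma$ is itself a ball we again obtain $M=S^3.$

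Now suppose $d(e)=2,$ so that $N(e)$ consists of two dihedral wedges glued cyclically about $e$ --- either two distinct tetrahedra or a single tetrahedron identified to itself --- with boundary built from the four faces not containing $e.$ If $N(e)\neq M,$ I would extract from its frontier an embedded $2$--sphere separating $N(e)$ from a non--empty remainder; irreducibility makes that remainder a ball, so $M=S^3,$ and since such a configuration uses more than one tetrahedron this contradicts minimality. Hence the outer faces must be identified only among themselves, so that $N(e)$ already closes up to all of $M,$ and it remains to run through the finitely many orientation--compatible pairings of the four outer faces. The closed, orientable manifolds that arise are $S^3,$ $L(3,1)$ and $L(4,1);$ a short check under the minimality and $0$--efficiency hypotheses removes $S^3,$ leaving $M=L(3,1)$ or $L(4,1).$

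I expect the degree--two case to be the main obstacle. The fold in part~(1) is essentially forced, but for $d(e)=2$ one must carefully enumerate the self--identifications of the four outer faces of $N(e)$ in both the two--tetrahedron and the self--glued subcases, keeping track of orientations and of the coincidences they force among the remaining edges and vertices. The delicate points are to confirm that a genuinely embedded separating sphere really appears whenever $N(e)\neq M$ (so that minimality is contradicted), and that the only closed manifolds obtained when $N(e)=M$ are the three listed; normalising the spheres produced so that irreducibility and $0$--efficiency apply cleanly is where most of the care is needed.
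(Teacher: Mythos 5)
First, a point of comparison: the paper does not actually prove this proposition --- it is quoted (with minor corrections) from \cite{JR}, Proposition~6.3, and the only proof supplied here is for the degree-three analogue, Proposition~\ref{pro:degree three edges}. Your strategy --- sort by the number of distinct tetrahedra meeting the edge, enumerate the forced face pairings, and kill the residual cases with cones, dunce hats and separating $2$--spheres --- is exactly the method of that proof, and your degree-one argument is essentially correct in this spirit. (One remark there: your ``other fold'' identifies $e$ with itself reversing orientation, which is not a legal face pairing --- compare the observation in the proof of Lemma~\ref{lem:meet in 2 faces} that an edge cannot fold back onto itself --- so that branch is vacuous rather than a second source of $S^3$.)

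The degree-two case has a genuine gap. Your dichotomy is: either $N(e)=M$, or the frontier of $N(e)$ is an embedded separating $2$--sphere. The second alternative is false. If only some of the four outer faces of $N(e)$ are identified with one another, $N(e)$ need not have spherical frontier: for example, folding two outer faces of one wedge onto each other turns that wedge into $\lst_1$ and $N(e)$ into a two-tetrahedron layered solid torus with $e$ as its interior, degree-two edge (this is $\lst_1$ with a tetrahedron layered on its univalent edge); its frontier is then a torus, and neither irreducibility nor minimality is contradicted by your argument. These partially identified configurations are precisely where the work lies --- they are the analogues of cases (3c), (4) and (5) in the proof of Proposition~\ref{pro:degree three edges} --- and for degree two they must be ruled out separately (e.g.\ by showing the pillow formed by the two wedges can be collapsed, or by a barrier argument). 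Two further repairs are needed even when the frontier is a sphere: the inference ``irreducibility makes the remainder a ball, so $M=S^3$'' is backwards, since irreducibility only provides a ball on \emph{some} side, possibly the $N(e)$ side; and the sphere built from free faces of a one-vertex triangulation is pinched at the vertex, hence not embedded. The correct route, as in case (5) of the degree-three proof, is to pass to the boundary of a regular neighbourhood, use it as a barrier surface, conclude from $0$--efficiency that it bounds a ball in the complement, and contradict minimality rather than irreducibility.
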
                                                                    

\begin{proposition}[Edges of degree three]\cite{JR}\label{pro:degree three edges}
A minimal and 0--efficient triangulation $\tri$ of the closed, orientable, connected and irreducible 3--manifold $M$ has no edge of order three unless either
\begin{enumerate}
\item[(3a)] $\tri$ contains a single tetrahedron and $M=L(5,2);$ or
\item[(3b)] $\tri$ contains two tetrahedra and $M=L(5,1)$ or $L(7,2);$ or
\item[(3c)] $\tri$ contains, as an embedded subcomplex, the two tetrahedron, layered triangulation $\lst_2 =\{4,3,1\}$ of the solid torus. Moreover, $\tri$ contains at least three tetrahedra and every edge of degree three is contained in such a subcomplex.
\end{enumerate}
Moreover, the triangulations in (3b) are obtained by identifying the boundary faces of $\lst_2$ appropriately.
\end{proposition}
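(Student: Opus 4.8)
The plan is to work entirely inside the closed star of a degree-three edge $e$ and to reconstruct all possible local pictures from the face identifications --- precisely the kind of argument that should later be pushed to degrees four and five. First I would record the combinatorial model: the three $1$--simplices of $p^{-1}(e)$ are the common axis $[u,v]$ of three tetrahedra $t_1,t_2,t_3$ arranged cyclically about $e$, so that $t_i=[u,v,w_i,w_{i+1}]$ with equatorial vertices $w_1,w_2,w_3$ (indices modulo $3$). The two faces of each $t_i$ not containing $e$ are $[u,w_i,w_{i+1}]$ and $[v,w_i,w_{i+1}]$, so cutting the star out of $M$ leaves a ball whose boundary $2$--sphere is the triangular bipyramid with apexes $u,v$ and equator $w_1w_2w_3$, consisting of these six faces. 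By the properties of $0$--efficient triangulations recalled above I may assume a single vertex $v_0$ (the two--vertex case being $S^3$), so $u,v,w_1,w_2,w_3$ are all identified; this already forces many lateral and equatorial edges together and severely limits how the six boundary faces may be paired.

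Next I would run the case analysis on the pairings of the six bipyramid faces, using orientability to make each pairing orientation--reversing, Lemma~\ref{lem:dunce gives s3} to discard any identification that folds a face to itself and produces a dunce hat (forcing the excluded $M=S^3$), and Proposition~\ref{pro:degree one and two edges} to fall back on whenever an identification creates an edge of degree one or two (yielding only $L(3,1)$ or $L(4,1)$, again outside the conclusion). The structural point I expect to extract is that the surviving configurations are governed by whether two consecutive slots about $e$ are filled by the \emph{same} tetrahedron. In the layered model this is exactly what happens: the degree-three interior edge of $\lst_2=\{4,3,1\}$ is surrounded by the two tetrahedra of $\lst_2$ with one of them wrapping (appearing twice, self--glued along a face through $e$). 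So whenever such a wrap occurs, that self--glued tetrahedron together with the remaining one is a copy of $\lst_2$ embedded in $\tri$ with $e$ as its interior degree-three edge. Since a solid torus is not closed, if $\tri$ uses more than these two tetrahedra there must be still further tetrahedra, which is case~(3c); running the same argument at every degree-three edge then shows each lies in such a copy of $\lst_2$.

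It then remains to dispose of the closed-up cases, where no further tetrahedra are needed. If a single tetrahedron suffices, its six edges split into two classes, necessarily both of degree three, and the unique orientable $0$--efficient gluing is recognised as $L(5,2)$, giving (3a). If exactly two tetrahedra are used they form $\lst_2$, whose two boundary faces are then folded together; the admissible foldings are the three ways of ``closing the book'' along the boundary edges $\{4,3,1\}=\{p,q,p+q\}$ with $(p,q)=(1,3)$, and the formulas $L(2q+p,q)$, $L(2p+q,p)$, $L(\abs{p-q},p)$ yield $L(7,3)\cong L(7,2)$, $L(5,1)$ and $L(2,1)=\R P^3$ respectively. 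Discarding the excluded $\R P^3$ leaves exactly the two spaces of (3b) and shows they arise by identifying the boundary faces of $\lst_2$, as claimed.

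The hard part, and the place where nearly all the care resides, is the face--pairing bookkeeping of the second step: one must organise the pairings of the bipyramid boundary so that orientability and the one--vertex condition eliminate every pattern except the layered (wrapping) one, in particular ruling out a genuine three--distinct--tetrahedra star that fails to contain $\lst_2$. I expect this combinatorial elimination --- rather than the lens--space recognition --- to be the main obstacle, and it is exactly the feature that must be made systematic enough to survive the passage to degrees four and five.
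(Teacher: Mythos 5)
Your overall framework (the bipyramidal star of $e$, case analysis on how its six free faces are paired, the observation that a ``wrap'' produces $\lst_2$) matches the paper's strategy, but two essential steps are missing, and in both cases the mechanism you propose to use is the wrong one. First, you expect the ``genuine three--distinct--tetrahedra star'' to be eliminated by face--pairing bookkeeping using orientability and the one--vertex condition. It cannot be: three pairwise distinct tetrahedra arranged around a degree--three edge is a perfectly consistent local configuration in an orientable one--vertex triangulation. What rules it out is \emph{minimality}: when $e$ lies in three distinct tetrahedra one performs a $3\to 2$ Pachner move, replacing the bipyramid by two tetrahedra and contradicting minimality of $\tri$. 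No amount of combinatorial elimination of face pairings will substitute for this, so as written your ``hard part'' is aimed at an obstacle that is not actually combinatorial.

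Second, in the two--tetrahedron (wrapping) case you assert that the self--glued tetrahedron together with the remaining one is a copy of $\lst_2$ \emph{embedded} in $\tri$, which is exactly the content of (3c) --- but embeddedness does not come for free. The quotient map may identify boundary edges (or faces) of the abstract $\lst_2$ complex without adding new tetrahedra to the star of $e$. The face identifications give the closed manifolds of (3b), but the purely edge identifications must be excluded separately: the paper does this by showing they either create a cone face or dunce hat (forcing $M=S^3$, contradicting minimality) or produce a pinched $2$--sphere boundary for a regular neighbourhood of the two tetrahedra, which is a barrier surface; $0$--efficiency then forces it to bound a ball and a homotopy collapses the configuration to a triangulation with fewer tetrahedra, again contradicting minimality. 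This barrier/normal--surface argument uses both hypotheses (minimal \emph{and} $0$--efficient) in an essential way and is invisible in your outline; without it you cannot conclude that the $\lst_2$ subcomplex of (3c) is embedded. The closed--up cases (3a) and (3b) in your third paragraph are handled correctly.
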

\begin{proof}
Assume that $e$ is an edge of degree three in $\tri.$ Let $\widetilde{\Delta}_e \subset \widetilde{\Delta}$ be the set of all 3--simplices containing a pre-image of $e.$ Then $1 \le |\widetilde{\Delta}_e| \le 3.$

If $|\widetilde{\Delta}_e| = 1,$ then the preimage of a small loop around $e$ must meet all four faces of the single tetrahedron in $\widetilde{\Delta}_e.$ It follows that $\widetilde{\Delta}=\widetilde{\Delta}_e.$ Analysing the possibilities gives $M=L(5,2)$ and $\tri$ is the a one-tetrahedron triangulation thereof. We are thus in case (3a).

If $|\widetilde{\Delta}_e| = 2,$ then one of the tetrahedra in $\widetilde{\Delta}_e,$ denoted $\widetilde{\sigma}_1,$ contains exactly one pre-image of $e,$ and the other, denoted $\widetilde{\sigma}_2,$ contains exactly two. The faces $f^1_0, f^1_1$ of $\widetilde{\sigma}_1$ meeting in the pre-image of $e$ have to be identified with faces $f^2_0, f^2_1$ respectively of $\widetilde{\sigma}_2.$ First assume that $f^2_0$ and $f^2_1$ meet in a pre-image of $e.$ Since $d(e) \neq 2,$ one of $f^2_0, f^2_1$ must contain another pre-image of $e.$ But this implies that $d(e) > 3$ since $M$ is closed and $|\widetilde{\Delta}_e| = 2.$ It follows that $f^2_0$ and $f^2_1$ cannot meet in a pre-image of $e.$ Similarly, if the pre-images of $e$ in the two faces meet in a vertex, then we get a contradiction to $d(e) = 3.$ Hence the pre-images form a pair of opposite edges and the remaining faces, $f^2_2, f^2_3,$ of $\widetilde{\sigma}_2$ must be identified since $|\widetilde{\Delta}_e| = 2.$ The face pairings $f^1_0\leftrightarrow f^2_0,$ $f^1_1\leftrightarrow f^2_1,$ $f^2_2 \leftrightarrow f^2_3$ are uniquely determined by this information and the fact that $M$ is orientable. The resulting identification space, $X,$ is equivalent to the layered triangulation $\lst_2$ of the solid torus. The projection $p\co \widetilde{\Delta} \to M$ gives rise to a natural map $p'\co X \to M$ such that the restriction of $p$ to $\widetilde{\sigma}_1 \cup \widetilde{\sigma}_2$ factors through $p'.$

If $p'$ is an embedding, then we are in case (3c). Hence assume that there are further identifications between $\widetilde{\sigma}_1$ and $\widetilde{\sigma}_2.$ 

If there is a pairing between the remaining faces of $\widetilde{\sigma}_1,$ then $\widetilde{\Delta}=\widetilde{\Delta}_e.$ Analysing the three possibilities (recalling that $M$ is orientable) gives either of the following cases:
\begin{enumerate}
\item[(1)] Folding along the univalent edge gives $\R P^3.$ However, the triangulation is not 0--efficient.
\item[(2)] Folding along the edge of degree five gives $L(7,2).$
\item[(3)] Folding along the edge of degree three gives $L(5,1).$
\end{enumerate}
These are the possibilities stated in case (3b).

Now assume there is no further identification of faces but of edges between $\widetilde{\sigma}_1$ and $\widetilde{\sigma}_2.$ Note that there are three distinct edges in the boundary of $X.$ If precisely two of them are identified, then either of the following cases applies:
\begin{enumerate}
\item[(4)] The triangulation $\tri$ contains a face which is a cone. Then \cite{JR}, Corollary 5.4, implies that $M=S^3.$ But a minimal triangulation of $S^3$ contains a single tetrahedron, contradicting $|\widetilde{\Delta}| > 2.$
\item[(5)] The boundary, $F,$ of a regular neighbourhood of $p(\widetilde{\sigma}_1 \cup \widetilde{\sigma}_2)$ in $M$ is an embedded 2--sphere and a barrier surface (see \cite{JR}). Hence $F$ shrinks to a stable surface in the complement of $p(\widetilde{\sigma}_1 \cup \widetilde{\sigma}_2).$ Since every normal 2--sphere in a 0--efficient triangulation is vertex linking and the complement of $p(\widetilde{\sigma}_1 \cup \widetilde{\sigma}_2)$ cannot contain a vertex linking surface, $F$ bounds a 3--ball in the complement of $p(\widetilde{\sigma}_1 \cup \widetilde{\sigma}_2).$
There is a homotopy of $M$ taking this ball to a disc which extends to a homotopy identifying the two free faces of $p(\widetilde{\sigma}_1 \cup \widetilde{\sigma}_2),$ hence giving rise to a triangulation of $M$ with fewer tetrahedra than $\tri.$ This contradicts minimality of $\tri.$
\end{enumerate}
Now assume that all three edges in the boundary of $X$ are identified. Then either a face is a dunce hat and Lemma \ref{lem:dunce gives s3} yields a contradiction as in (4), or the argument in (5) applies.

If $|\widetilde{\Delta}_e| = 3,$ then $e$ is contained in three distinct tetrahedra and a $3 \rightarrow 2$ Pachner move can be applied. This reduces the number of tetrahedra, contradicting the assumption that $\tri$ is minimal. 
\end{proof}


\subsection{Edges of degree equal to four}

The analysis of degree four and five edges is done more coarsely in order to reduce the number of cases to be considered. Some of the notions in the preceding proof are first formalised. Given an edge, $\overline{e},$ of degree $n$ in a minimal and 0--efficient triangulation $\tri$ of the closed, orientable, connected and irreducible 3--manifold $M,$ assume that it is contained in precisely $k$ pairwise distinct tetrahedra, $\sigma_1,...,\sigma_k.$ Then $1\le k \le n$ and there is a triangulated complex, $X=X_{n;k},$ having a triangulation with $k$ tetrahedra containing an interior edge of degree $n,$ denoted $e,$ and a map $p_e\co X \to M$ taking $e \to \overline{e}$ with the property that 
$p|_{\{\widetilde{\sigma}_1, ..., \widetilde{\sigma}_k\}}\co \widetilde{\Delta} \to M$ factors through $p_e.$ A complex $X$ as above is \emph{maximal} if there is no other such complex $X'$ with the property that 
$$p\co \{\widetilde{\sigma}_1, ..., \widetilde{\sigma}_k\} \to X \to M\quad\text{factors as}\quad
p\co \{\widetilde{\sigma}_1, ..., \widetilde{\sigma}_k\} \to X' \to X \to M.$$
If $X_{n;k}$ is maximal, then $(X_{n;k}, e)$ is said to be a \emph{model for $\overline{e}.$}

{
\begin{figure}[t]
\psfrag{M}{{\small $\lst_1$}}
\psfrag{N}{{\small $\lst_1$}}
\psfrag{e}{{\small $e$}}
\begin{center}
    \subfigure[$X^0_{4;1} \cong S^3$]{\label{fig:X10}
      \includegraphics[height=2.1cm]{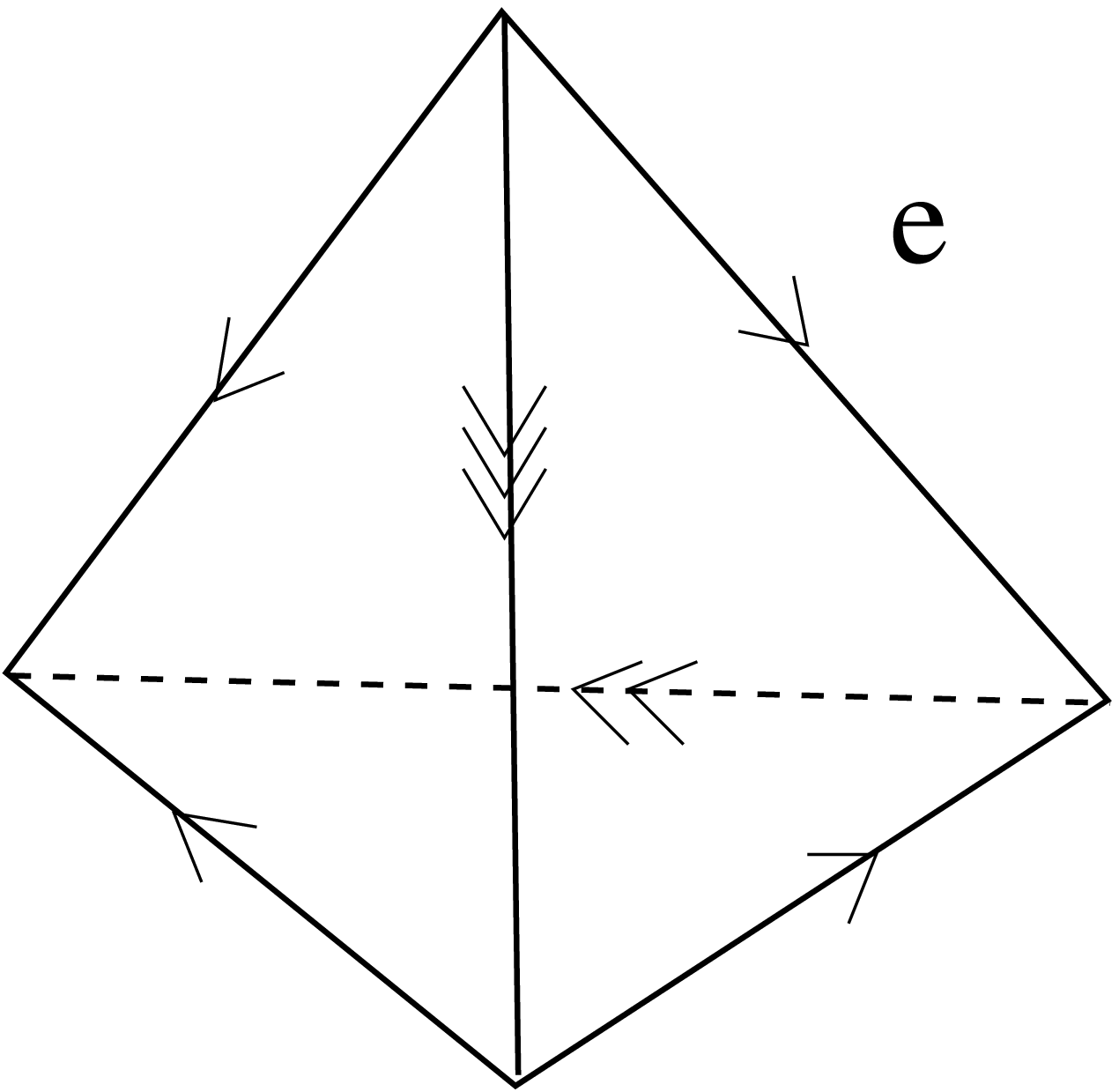}
    } 
    \qquad
    \subfigure[$X^1_{4;1} \cong L(4,1)$]{\label{fig:X11}
      \includegraphics[height=2.1cm]{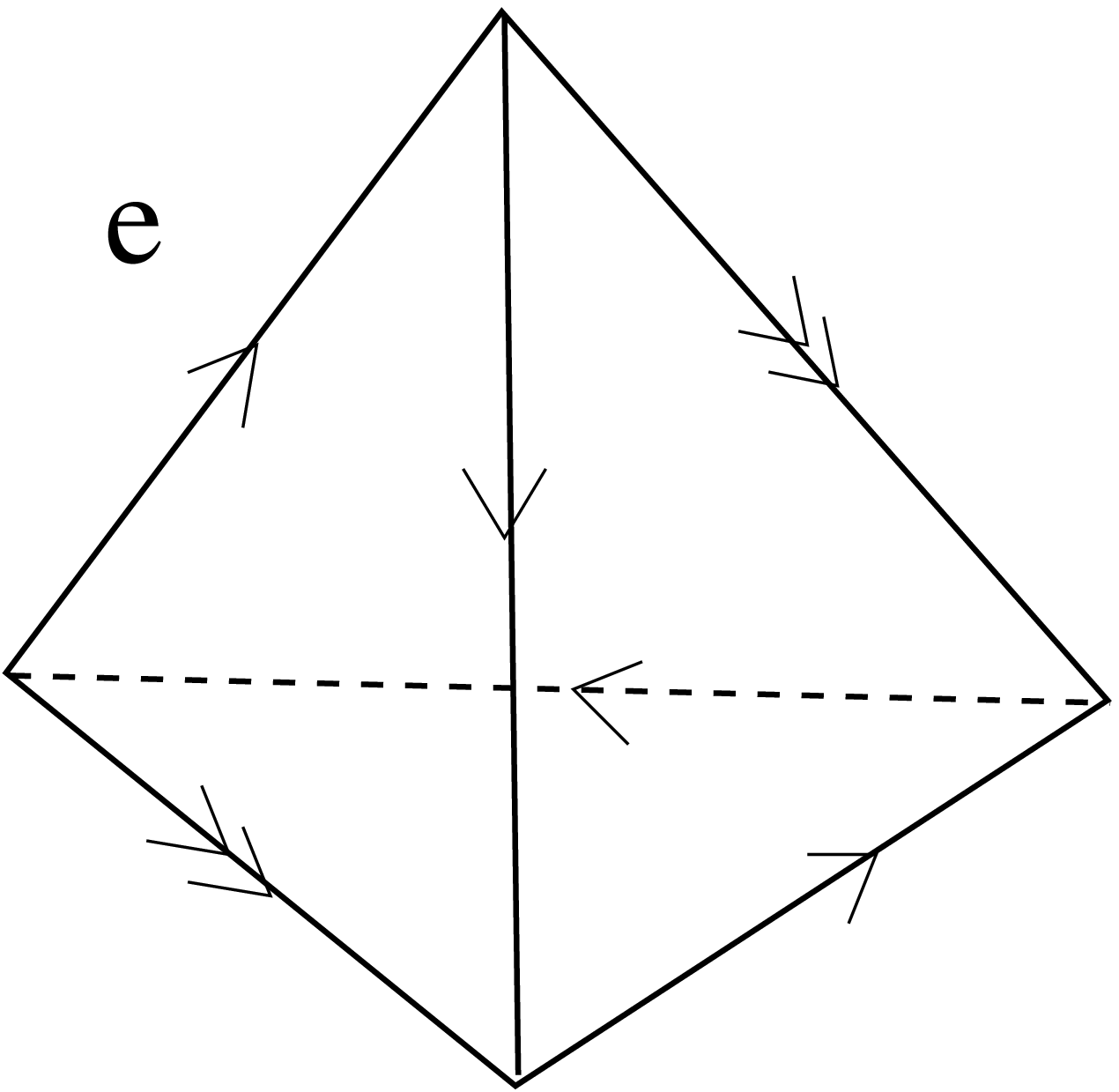}
    } 
   \qquad
           \subfigure[$X_{4;2}^0\cong L(8,3)$]{\label{fig:X21}
      \includegraphics[height=2.1cm]{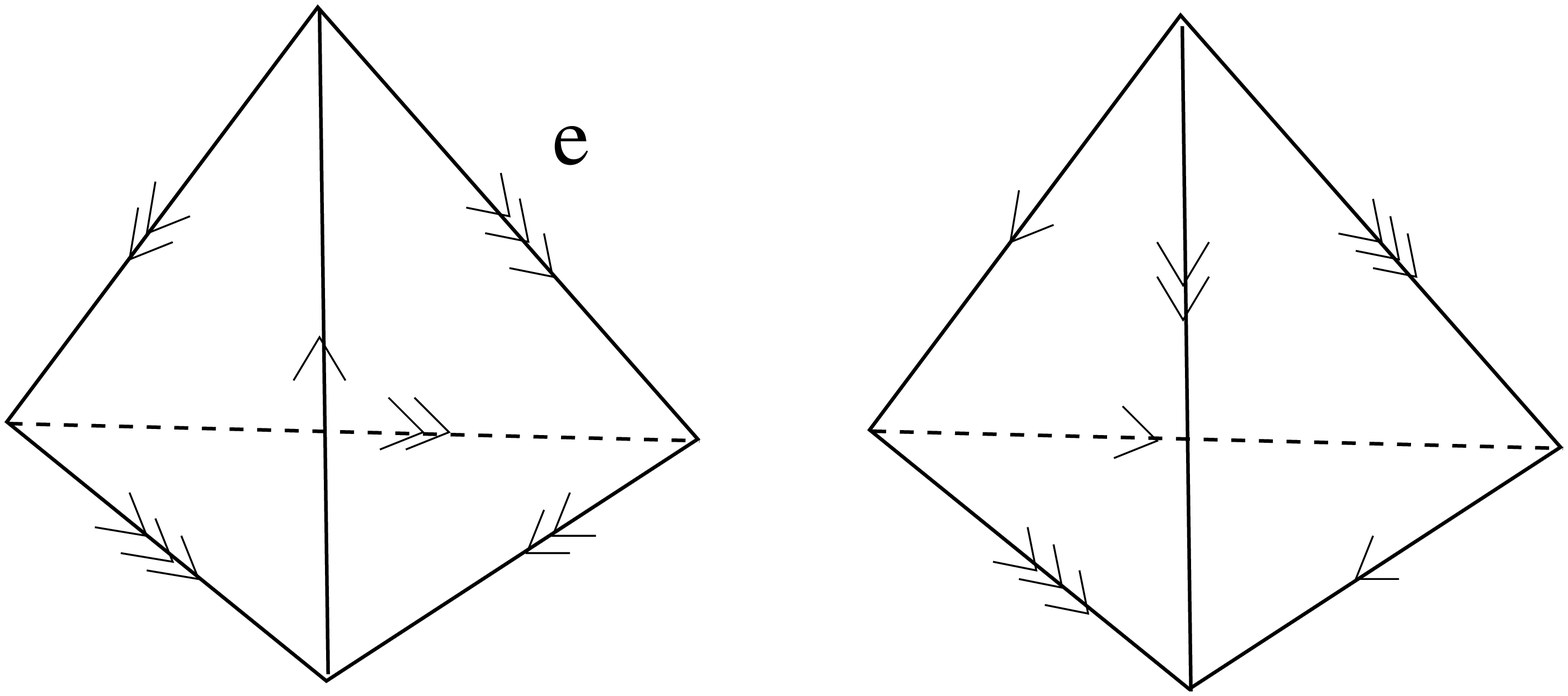}
    } 
   \\
           \subfigure[$X_{4;2}^1\cong S^3/Q_8$]{\label{fig:X22}
      \includegraphics[height=2.2cm]{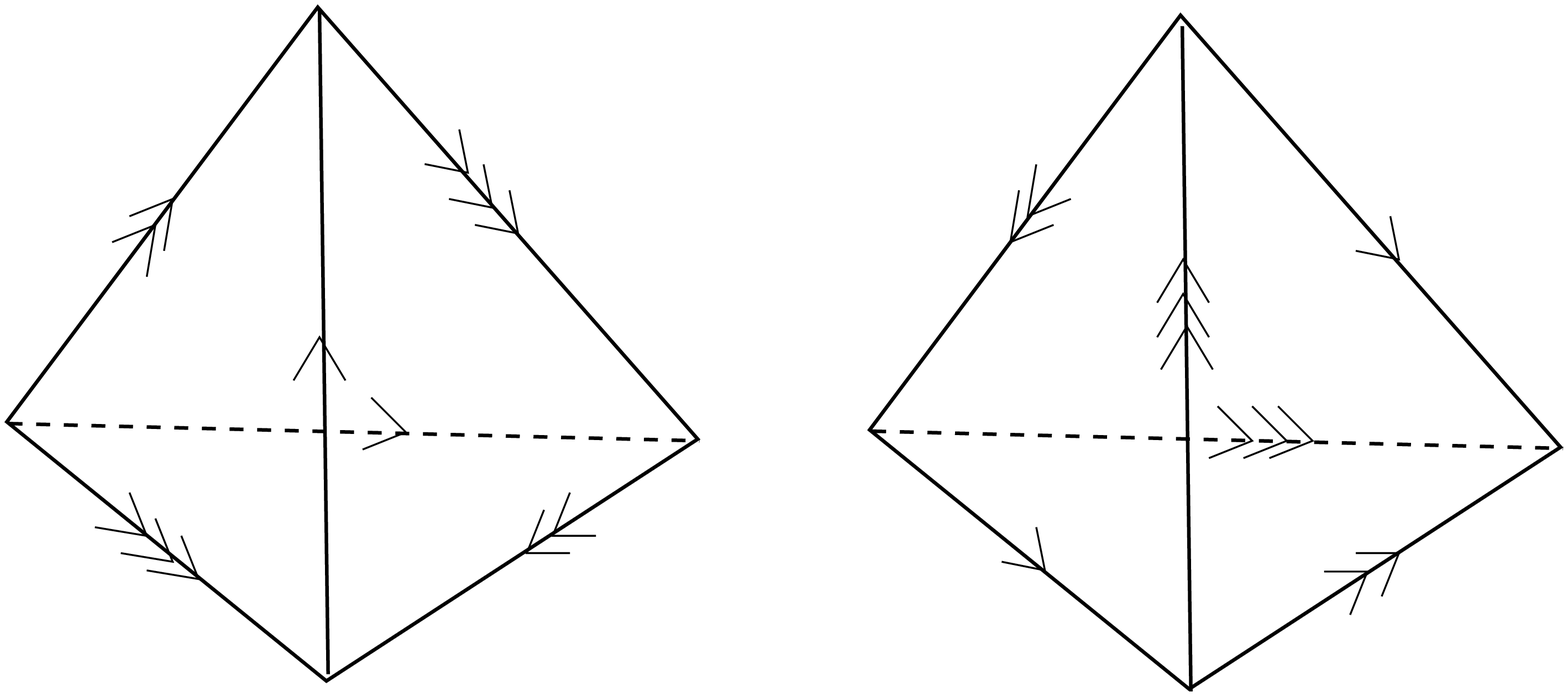}
    } 
    \qquad
           \subfigure[$X_{4;2}^2\cong$ solid torus]{\label{fig:X2}
      \includegraphics[height=2.3cm]{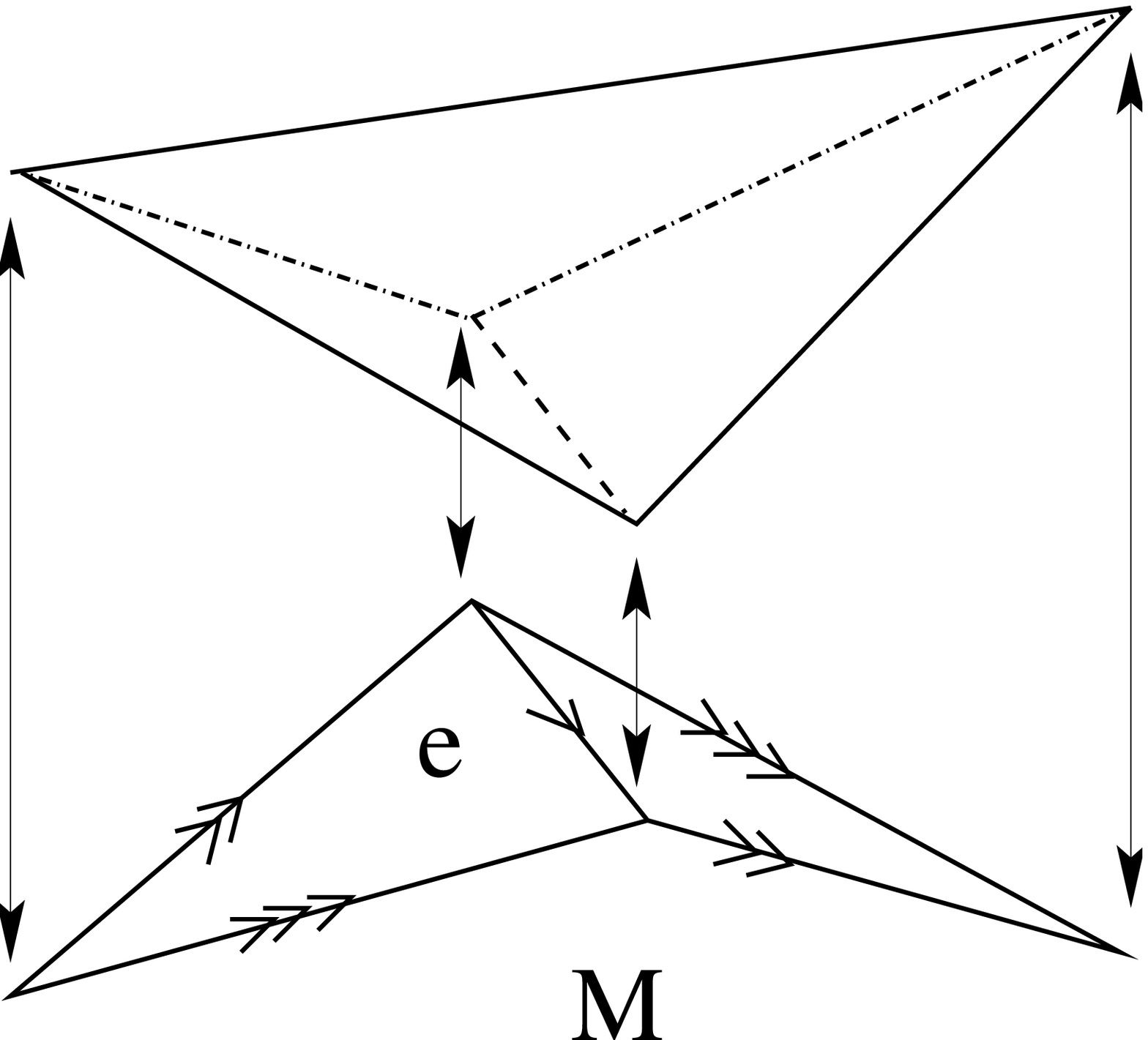}
    } 
    \qquad
    \subfigure[$X^0_{4;3}\cong$ solid torus]{\label{fig:X30}
      \includegraphics[height=2.3cm]{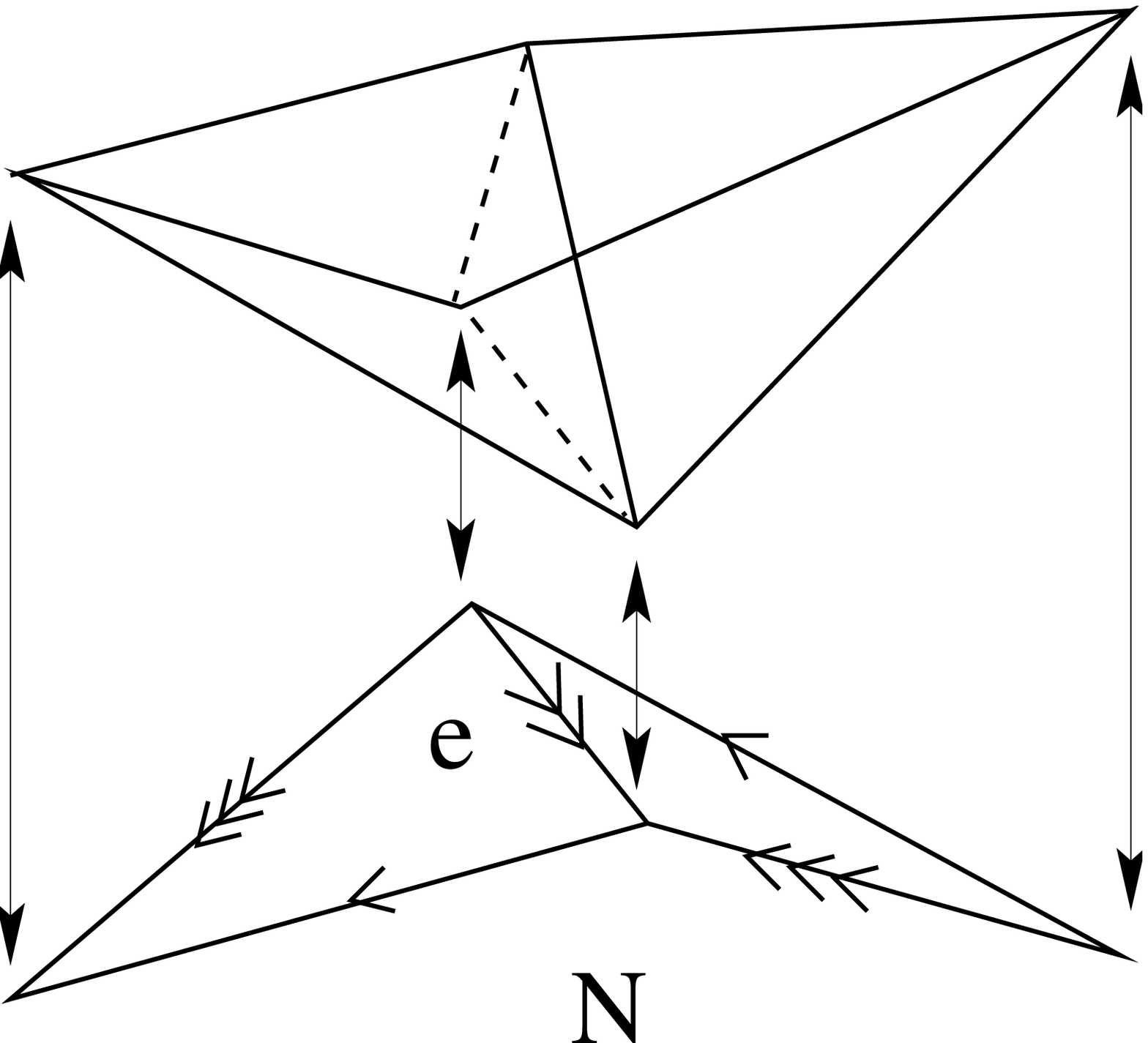}
    } 
    \\
    \subfigure[$X^1_{4;3}\cong$ solid torus]{\label{fig:X31}
      \includegraphics[height=2.2cm]{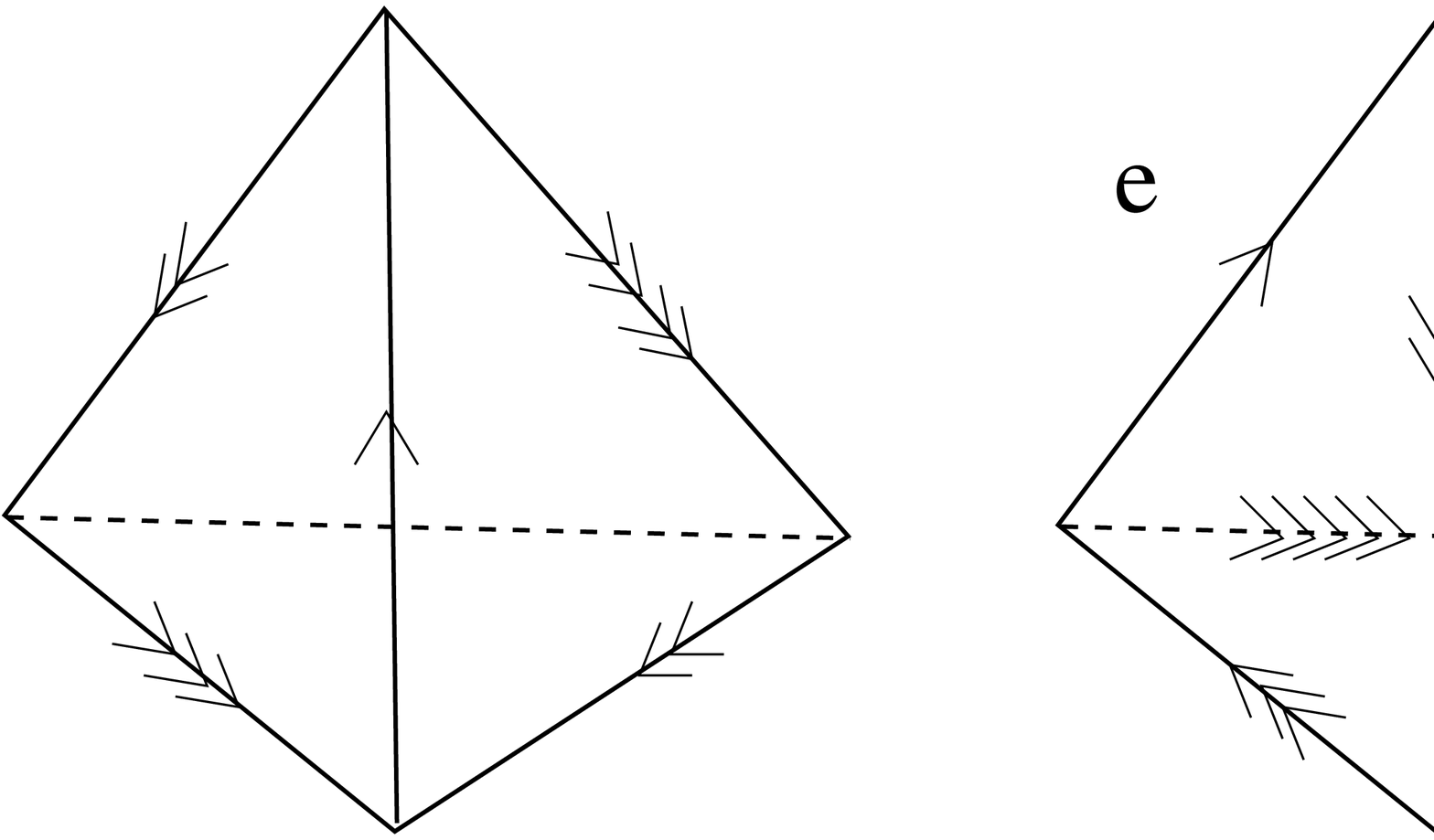}
    } 
     \qquad
         \subfigure[$X_{4;4}\cong$ 3--ball]{\label{fig:X4}
      \includegraphics[height=2.2cm, width=3cm]{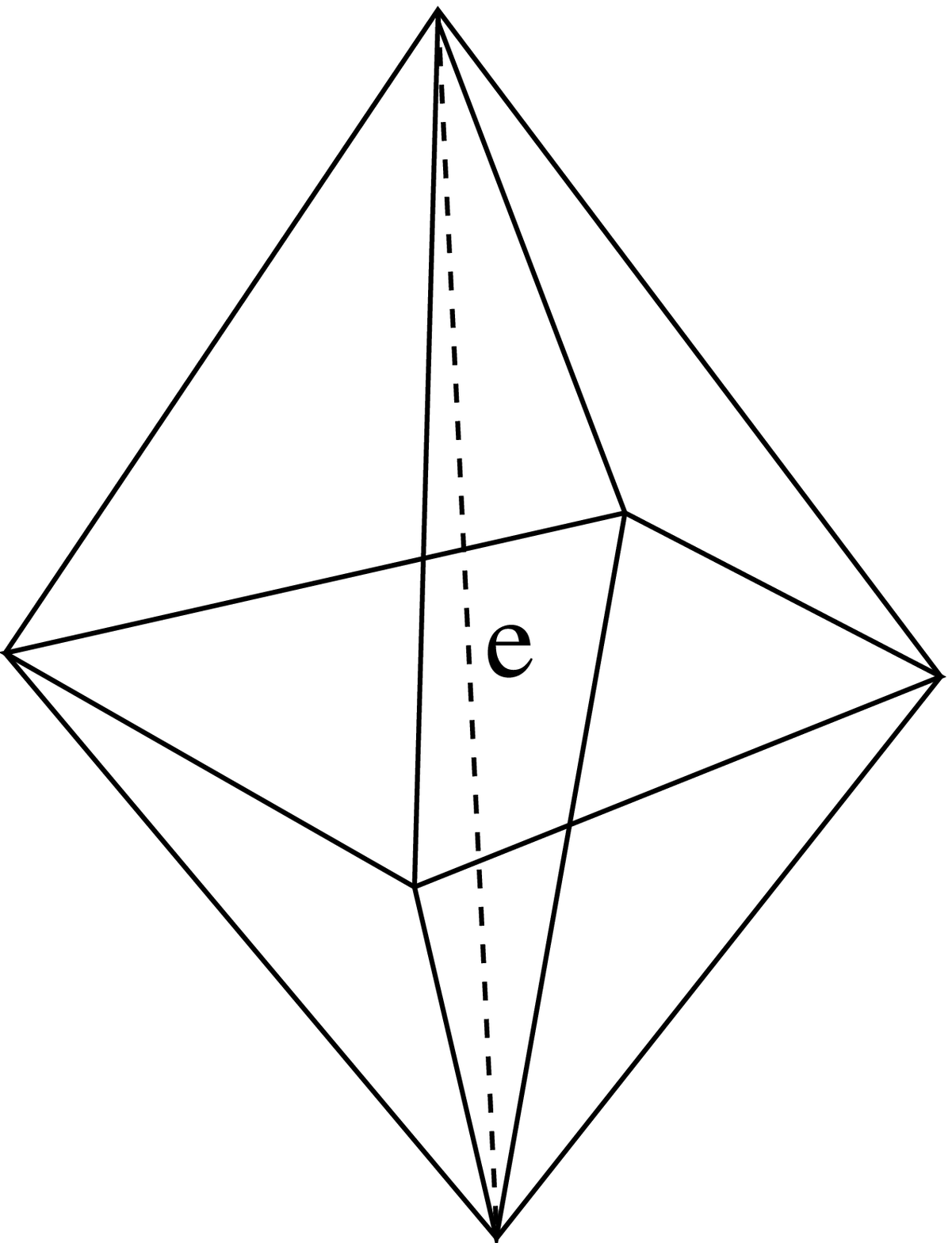}
    } 
\end{center}
    \caption{Degree four edges in minimal and 0--efficient triangulations}
     \label{fig:degree four edges}
\end{figure}
}

\begin{proposition}[Edges of degree four]\label{pro:degree four edges}
Assume the minimal and 0--efficient triangulation $\tri$ of the closed, orientable, connected and irreducible 3--manifold $M$ has an edge of degree four, denoted $\overline{e}.$ Then the model, $(X_{4;k}, e),$ for $\overline{e}$ is one of the following:
\begin{enumerate}
\item[(4a)] If $k=1,$ then $X_{4;1}=X_{4;1}^0$ or $X_{4;1}^1,$ where $X_{4;1}^0$ and $X_{4;1}^1$ are one-tetrahedron triangulations of $S^3$ and $L(4,1)$ respectively that contain a (necessarily unique) edge of degree four, $e.$ In particular, $M=S^3$ or $L(4,1).$
\item[(4b)] If $k=2,$ then $X_{4;2}=X_{4;2}^0,$ $X_{4;2}^1$ or $X_{4;2}^2,$ where $X_{4;2}^0$ is a two-tetrahedron triangulation of $L(8,3)$ with $e$ the unique edge which has degree two with respect to each tetrahedron; $X_{4;2}^1$ is a two-tetrahedron triangulation of $S^3/Q_8$ with $e$ either of its three degree four edges; and $X_{4;2}^2 = \{5,2,3\}$ is the triangulated solid torus $\lst_1$ with another tetrahedron layered on the boundary edge of degree three with $e$ the unique edge of degree four;
\item[(4c)] If $k=3,$ then $X_{4;3}=X_{4;3}^0$ or $X_{4;3}^1,$ where $X_{4;3}^0$ is $\lst_1$ with two tetrahedra attached to its boundary faces such that the degree 2 boundary edge becomes an interior edge of degree four, $e;$ and $X_{4;3}^1$ is a solid torus obtained by first identifying a pair of opposite edges of a tetrahedron, and then layering two tetrahedra on the resulting edge making it an interior edge of degree four, $e.$
\item[(4d)] If $k=4,$ then $X_{4;4}$ is an octahedron triangulated with four tetrahedra with $e$ their common intersection.
\end{enumerate}
The maximal complexes are shown in Figure \ref{fig:degree four edges}: To obtain $X_{4;1}^0$ and $X_{4;1}^1,$ identify the two front faces and identify the two back faces as indicated by the arrows; to obtain $X_{4;2}^0$ identify the two front faces of the first 3--simplex with the front faces of the second, and then the back faces of each tetrahedron; for $X_{4;2}^1$ identify the two front faces of the first 3--simplex with the front faces of the second, and likewise with the back faces; to obtain $X_{4;3}^1,$ identify the front faces of the first 3--simplex with the front faces of the second, and the back faces of the second with the front faces of the third.
\end{proposition}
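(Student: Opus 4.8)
The plan is to re-run the strategy of the proof of Proposition~\ref{pro:degree three edges}, now organised by the partition of the degree. Write $\widetilde{\sigma}_1,\dots,\widetilde{\sigma}_k$ for the tetrahedra of $\widetilde{\Delta}$ containing a preimage of $\overline{e}$ and let $m_i\ge1$ be the number of preimages of $\overline{e}$ in $\widetilde{\sigma}_i$, so that $\sum_{i=1}^k m_i = d(\overline{e}) = 4$. Around $\overline{e}$ these preimages provide four corners (``wedges''), counted with the multiplicities $m_i$, and the link of $e$ in the model is a circle cut into four arcs by the four faces running through $\overline{e}$. First I would list, for each $k$, the admissible cyclic arrangements of the four wedges: this is the choice of a partition $4 = \sum_{i=1}^k m_i$ (namely $4$; $3+1$ and $2+2$; $2+1+1$; $1+1+1+1$, giving $k=1,2,3,4$) together with, for every tetrahedron contributing two preimages, the datum of whether the corresponding two edges are opposite or adjacent in that tetrahedron. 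Pairing consecutive wedges along their common face, and adding the further face and edge identifications forced by requiring each $\widetilde{\sigma}_i$ to be embedded and $M$ to be orientable, then determines the maximal model $X_{4;k}$ in each case.

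The next step is to recognise each resulting complex as one of those in Figure~\ref{fig:degree four edges}. When the forced pairings leave no free face, the complex is closed, so $\widetilde{\Delta}=\{\widetilde{\sigma}_1,\dots,\widetilde{\sigma}_k\}$ and $M$ equals it; a short inspection of the (few) orientable gluings then yields $M=S^3$ or $L(4,1)$ for $k=1$, and $M=L(8,3)$ or $S^3/Q_8$ for $k=2$ with partition $2+2$, the homeomorphism type being pinned down by computing $\pi_1$ from the induced edge relations as in Section~\ref{sec:minimal layered extension}. When instead free faces remain, the model has nonempty boundary and I would identify it by reading off its boundary triangulation: the partition $3+1$ gives $\lst_1$ with a tetrahedron layered on its degree--three boundary edge, that is $X_{4;2}^2=\{5,2,3\}$; the partition $2+1+1$ gives the two solid tori $X_{4;3}^0$ and $X_{4;3}^1$ according to whether the doubly--counted tetrahedron is the one--tetrahedron solid torus $\lst_1$ or a tetrahedron with a pair of opposite edges identified; and $1+1+1+1$ gives the octahedron $X_{4;4}$.

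It then remains to discard the arrangements that neither close up nor embed with boundary. These are eliminated exactly by the obstructions used for degree three. If a forced identification turns a face into a cone or a dunce hat, then \cite{JR}, Corollary~5.4, or Lemma~\ref{lem:dunce gives s3} forces $M=S^3$, contradicting $\abs{\widetilde{\Delta}}>1$. If identifying a pair of boundary edges of the partial model produces an embedded $2$--sphere bounding $p(\widetilde{\sigma}_1\cup\dots\cup\widetilde{\sigma}_k)$, then the barrier--surface argument of \cite{JR} used in case~(5) of Proposition~\ref{pro:degree three edges} reduces the number of tetrahedra, contradicting minimality; and Propositions~\ref{pro:degree one and two edges} and~\ref{pro:degree three edges} remove any configuration in which an edge would be forced to have degree less than four outside the already excluded manifolds. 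Finally, for $k=4$ the four wedges come from four distinct tetrahedra glued cyclically into the octahedron, no further identification is forced, and---in contrast to the degree--three case, where $\abs{\widetilde{\Delta}_e}=3$ allowed a reducing $3\to 2$ Pachner move---four tetrahedra about an edge admit no move lowering the tetrahedron count, so $X_{4;4}$ persists as a genuine model.

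The main obstacle I expect is the bookkeeping of the second step: enumerating the cyclic wedge arrangements for $k=2$ and $k=3$ without repetition, deciding in each case precisely which extra face and edge pairings are \emph{forced} rather than merely allowed, and correctly recognising the two closed quotients $L(8,3)$ and $S^3/Q_8$. It is the embeddedness of each $\widetilde{\sigma}_i$ and the orientability of $M$, rather than any deeper topology, that collapse the long a priori list of gluings to the finite list stated; keeping track of these two constraints throughout is where the care is needed.
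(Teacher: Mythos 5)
Your overall strategy---organising the analysis by the partitions $4=\sum_{i=1}^k m_i$, pairing consecutive wedges around the edge, and eliminating bad configurations by the cone/dunce-hat and barrier-surface arguments inherited from the degree-three case---is exactly the route the paper takes, and your assignment of models to partitions ($3+1$ giving $X_{4;2}^2$, $2+1+1$ giving $X_{4;3}^0$ or $X_{4;3}^1$ according to whether the doubly-counted tetrahedron is $\lst_1$ or has a pair of opposite edges identified, $1+1+1+1$ giving the octahedron) agrees with the paper's proof.

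The one step that would fail as written is identifying the closed quotients ``by computing $\pi_1$ from the induced edge relations.'' The fundamental group does not determine a lens space: $L(8,1)$ and $L(8,3)$ both have $\pi_1\cong\Z_8$ but are not homeomorphic, so in the $2+2$ configuration where two faces of $\widetilde{\sigma}_1$ are self-identified you must extract the second lens-space parameter by other means; the paper does this by observing that the two-tetrahedron complex is a union of two solid tori glued along their boundary, i.e.\ an explicit genus-one Heegaard splitting, from which one reads off $L(8,3)$. Similarly, concluding $S^3/Q_8$ from $\pi_1\cong Q_8$ alone presupposes elliptization; the paper instead exhibits, for each edge, an embedded normal Klein bottle made of two quadrilaterals (one per tetrahedron) which is a one-sided Heegaard surface, and identifies the manifold via \cite{Rubin1978}. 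Apart from this identification step, your outline matches the paper's argument.
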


\begin{proof}
We only have to show that the stated list of maximal complexes is correct. The main arguments are as in the proof for Proposition \ref{pro:degree three edges}, and we will therefore not give all details. As above, assume that $e$ is an edge of degree four in $M,$ and let $\widetilde{\Delta}_e \subset \widetilde{\Delta}$ be the set of all 3--simplices containing a pre-image of $e.$ Then $1 \le |\widetilde{\Delta}_e| \le 4.$

If $|\widetilde{\Delta}_e| = 1,$ then the preimage of a small loop around $e$ must meet all four faces of the single tetrahedron in $\widetilde{\Delta}_e.$ It follows that $\widetilde{\Delta}=\widetilde{\Delta}_e.$ Analysing the possibilities gives $M = S^3$ or $L(4,1)$ and $\tri$ is a one-tetrahedron triangulation containing a (necessarily unique) edge of degree four. We are thus in case (4a).

If $|\widetilde{\Delta}_e| = 2,$ denote the tetrahedra in $\widetilde{\Delta}_e$ by $\widetilde{\sigma}_1$ and $\widetilde{\sigma}_2.$ Without loss of generality, $\widetilde{\sigma}_1$ contains precisely one or two pre-images of $e.$ 

In the first case, analysing the possible face pairings yields the one tetrahedron solid torus with another tetrahedron layered on the degree 3 boundary edge, thus having a unique interior edge of degree four. This is $X_2^2,$ and there may be further identifications from other face pairings in $\Phi.$

In the second case, first assume that the two pre-images of $e$ are contained in a face of $\widetilde{\sigma}_1.$ This forces $\widetilde{\sigma}_2$ to also have a face containing two pre-images of $e,$ and these two faces are identified. Now there are two other faces of $\widetilde{\sigma}_1$ containing a pre-image of $e.$ If they are identified, then a face is a cone. Hence both are identified with faces of $\widetilde{\sigma}_2.$ There is a unique way to do this under the condition that $e$ has degree four. But then the remaining free face of each $\widetilde{\sigma}_i$ is a cone or a dunce hat. In either case, this forces $M=S^3,$ contradicting minimality.

Hence assume that the two pre-images of $e$ are not contained in a face of either $\widetilde{\sigma}_1$ nor  $\widetilde{\sigma}_2.$ Then the triangulation contains precisely two tetrahedra. First assume that two faces of $\widetilde{\sigma}_1$ are identified. Then the same is true for $\widetilde{\sigma}_2.$ Analysing all possible identifications gives (up to combinatorial equivalence) the triangulation $X_2^0$ with the specified marked edge of degree four. The triangulation can be viewed as two solid tori identified along their boundary, and the corresponding lens space identified as $L(8,3).$ Next, assume that no two faces of $\widetilde{\sigma}_1$ are identified. This yields the triangulation $X_2^1$ of with a marked edge. However, all three edges are combinatorially equivalent. To identify the manifold, notice that for each edge, there is a normal surface made up of two quadrilaterals, one in each tetrahedron, which don't meet that edge. This is an embedded Klein bottle and a one-sided Heegaard surface for the manifold. Using \cite{Rubin1978}, the manifold can now be identified as $S^3/Q_8.$

If $|\widetilde{\Delta}_e| = 3,$ denote the tetrahedra in $\widetilde{\Delta}_e$ by $\widetilde{\sigma}_1,$ $\widetilde{\sigma}_2$ and $\widetilde{\sigma}_3.$ Without loss of generality, $\widetilde{\sigma}_2$ contains precisely two pre-images of $e.$ First assume that there is no face pairing between faces of $\widetilde{\sigma}_1$ and $\widetilde{\sigma}_3.$ Then the two faces incident with the pre-image of $e$ in $\widetilde{\sigma}_i,$ $i=1,3,$ are identified with faces of $\widetilde{\sigma}_2.$ Analysing the possibilities gives $X_3^1,$ and there may be further identifications from other face pairings in $\Phi.$ Next assume that there is a pairing between faces of $\widetilde{\sigma}_1$ and $\widetilde{\sigma}_3$ containing a pre-image of $e.$ Then there is a unique such pairing, and the remainder of the argument is as in the proof of (3c), giving $X_3^0.$

If $|\widetilde{\Delta}_e| = 4,$ then the only possibility is the octahedron with possible identifications along its boundary.
\end{proof}

The simplicial maps from $X_{4;2}^2$ to a minimal, 0--efficient triangulation of a closed, orientable 3--manifold are $X_{4;2}^2 \to L(7,2)$ and $X_{4;2}^2 \to L(8,3)= X_{4;2}^0.$ The triangulation $X_{4;2}^0$ of $L(8,3)$ contains three edges of degree four; two of them have neighbourhoods modelled on $X_{4;2}^2.$ Thus, we have classified all minimal, 0--efficient triangulations with at most two tetrahedra containing an edge of degree four.

It is also true that $\lst_2 \to L(7,2),$ but $\lst_2$ has no interior edge of degree four, and hence does not appear in the above list.



\subsection{Edges of degree equal to five}

\begin{proposition}[Edges of degree five]\label{pro:degree five edges}
Assume the minimal and 0--efficient triangulation $\tri$ of the closed, orientable, connected and irreducible 3--manifold $M$ has an edge of degree five, denoted $\overline{e}.$ Then the model, $(X_{5;k}, e),$ for $\overline{e}$ is one of the following:
\begin{enumerate}
\item[(5a)] If $k=1,$ then $X_{5;1}$ is a one-tetrahedron triangulation of $S^3$ which contains a (necessarily unique) edge of degree five, $e.$ In particular, $M=S^3.$
\item[(5b)] If $k=2,$ then $X_{5;2}=X_{5;2}^0$ or $X_{5;2}^1,$ where $X_{5;2}^0$ is a two-tetrahedron triangulation of $L(3,1)$ with $e$ either of the two edges of degree five; and $X_{5;2}^1$ is a two-tetrahedron triangulation of $L(7,2)$ with $e$ the unique edge of degree five.
\item[(5c)] If $k=3,$ then $X_{5;3}=X_{5;3}^0,$ $X_{5;3}^1$ or $X_{5;3}^2,$ where $X_{5;3}^0$ is $\lst_1$ with two tetrahedra attached to its boundary faces such that the degree 3 boundary edge becomes an interior edge of degree five, $e;$ $X_{5;3}^1=\{5,3,8\}$ is $X^2_{4;2}=\{5,2,3\}$ with another tetrahedron layered on the degree 4 boundary edge, giving a unique interior edge of degree five, $e;$ and $X_{5;3}^2$ is a solid torus with a unique interior edge of degree five, $e,$ obtained from a 3-tetrahedron triangulated prism by identifying two boundary squares.
\item[(5d)] If $k=4,$ then $X_{5;4}=X_{5;4}^0$ or $X_{5;4}^1,$ where $X_{5;4}^0$ is $\lst_1$ with three tetrahedra attached to its boundary faces such that the degree 2 boundary edge becomes an interior edge of degree five, $e;$ and $X_{5;4}^1$ is a solid torus obtained by identifying a pair of opposite edges of a tetrahedron, and then layering one tetrahedron on the resulting edge, and attaching two tetrahedra to the remaining boundary faces to create a unique interior edge of degree five, $e.$
\item[(5e)] If $k=5,$ then $X_{5,5}$ is a ball triangulated with five tetrahedra such that their intersection is a unique interior edge of degree five, $e.$
\end{enumerate}
The maximal complexes are shown in Figure \ref{fig:degree five edges}.
\end{proposition}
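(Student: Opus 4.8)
The plan is to follow the exact same case analysis that structured the proofs of Propositions~\ref{pro:degree three edges} and~\ref{pro:degree four edges}, now with $n=5$, so the whole argument is organised by the value $k=|\widetilde{\Delta}_e| \in \{1,2,3,4,5\}$. As in those proofs, for a degree five edge $\overline{e}$ I set $\widetilde{\Delta}_e$ to be the set of 3--simplices carrying a pre-image of $e$, and observe $1\le k \le 5$. For each $k$ I will build up the model $X_{5;k}$ by deciding how the five pre-image edges are distributed among the $k$ tetrahedra and which incident faces must be identified in order that $d(e)=5$ and $M$ be orientable; the point is that the maximality condition lets me stop as soon as no further identification is \emph{forced} by the degree and closedness constraints, leaving possible extra identifications to be ruled out or absorbed exactly as in the earlier proofs.

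First I would dispose of the extreme cases $k=1$ and $k=5$, which are the easiest. For $k=1$, a small loop around $e$ must meet all four faces of the single tetrahedron, so $\widetilde{\Delta}=\widetilde{\Delta}_e$; enumerating the orientable face pairings that produce a degree five edge leaves only the one-tetrahedron $S^3$, giving (5a). For $k=5$ the only configuration is the triangulated ball (an ``octahedron-like'' cone) whose five tetrahedra meet exactly along $e$, with possible boundary identifications, giving (5e). The genuinely laborious cases are $k=2,3,4$, where I must partition the five edge-pre-images among the tetrahedra (for instance $2+3$, $2+2+1$, $1+1+3$, etc.), and for each partition track which faces are forced to be glued. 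Whenever two forced identifications would place two pre-images of $e$ in a common face, or make an incident face a cone or dunce hat, I invoke the same obstructions used before: a cone forces $M=S^3$ by \cite{JR}, Corollary~5.4, a dunce hat forces $M=S^3$ by Lemma~\ref{lem:dunce gives s3}, and either outcome contradicts minimality once $|\widetilde{\Delta}|>1$; a non-vertex-linking normal $2$--sphere (barrier surface) bounding a ball contradicts either $0$--efficiency or minimality by the argument reproduced in step~(5) of the proof of Proposition~\ref{pro:degree three edges}.

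The surviving configurations must then be matched against the listed complexes and the underlying manifolds identified. For $k=2$ the two orientable ways of gluing two tetrahedra around a shared degree five edge give $X_{5;2}^0$ and $X_{5;2}^1$, which I recognise as triangulations of $L(3,1)$ and $L(7,2)$; these identifications I would confirm by exhibiting the meridian data, noting (consistently with Proposition~\ref{pro:degree three edges}(3b)) that $L(7,2)$ already appears as a folding of $\lst_2$. For $k=3$ I expect exactly the three solid-torus models $X_{5;3}^0, X_{5;3}^1, X_{5;3}^2$: one obtained from $\lst_1$ by attaching tetrahedra to both boundary faces so the degree three boundary edge becomes interior of degree five, one which is the layered extension $\{5,3,8\}$ of $X_{4;2}^2=\{5,2,3\}$, and one coming from identifying two squares of a three-tetrahedron prism. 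For $k=4$ the two models $X_{5;4}^0, X_{5;4}^1$ arise analogously, one from $\lst_1$ with three extra tetrahedra and one from the pair-of-opposite-edges construction followed by layering and attaching. Here I would lean on the fact that these are precisely the maximal complexes built so that $e$ is forced to be interior of degree five, so maximality pins them down uniquely up to combinatorial equivalence.

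The main obstacle is the sheer bookkeeping of the intermediate cases $k=3$ and $k=4$: one must enumerate all edge-distributions and, for each, all orientable face-pairing patterns, then repeatedly discharge the ``cone/dunce hat/barrier-sphere'' obstructions to see which patterns either are forbidden, collapse the triangulation to $S^3$, or stabilise at one of the listed solid-torus (or ball) models. The subtle step is verifying \emph{maximality} — that each listed $X_{5;k}$ admits no further forced identification compatible with $d(e)=5$ and orientability — and conversely that every admissible configuration factors through one of them; this is where I would most carefully mirror the ``$p'$ is an embedding'' versus ``further identifications'' dichotomy of the degree three and degree four proofs, and where the identification of $X_{5;3}^2$ as a solid torus via its prism description requires the most care. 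Since the structural arguments are identical to the preceding propositions, I would, as the authors signal, omit routine details and present only the new configurations and their manifold identifications.
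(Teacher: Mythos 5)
Your proposal follows essentially the same route as the paper: the paper's own proof simply states that it ``follows the same line of argument as the previous two propositions' proofs'' and only highlights the points that differ, and your case analysis by $k=|\widetilde{\Delta}_e|$ with the cone/dunce-hat/barrier-sphere obstructions is exactly that template. The one substantive point you miss is the first of the two the paper singles out: in the $k=2$ case leading to $X_{5;2}^0\cong L(3,1)$, minimality does \emph{not} imply 0--efficiency (this is one of the two exceptional manifolds), so you cannot discharge this case by the usual implicit appeal to \cite{JR}; the paper instead verifies 0--efficiency of this two-tetrahedron triangulation directly by computing all connected normal surfaces of Euler characteristic two. Without that check you cannot legitimately decide whether $X_{5;2}^0$ belongs on the list (compare the $\R P^3$ case in Proposition~\ref{pro:degree three edges}, which is excluded precisely because the analogous triangulation fails to be 0--efficient). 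The rest of your outline, including the pinched-sphere dichotomy in the $2{+}2{+}1$ subcase of $k=3$ that produces $X^0_{5;3}$ versus $X^2_{5;3}$, matches the paper's argument.
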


\begin{figure}[t]
\psfrag{M}{{\small $X^2_{4;2}$}}
\psfrag{N}{{\small $\lst_1$}}
\psfrag{e}{{\small $e$}}
\begin{center}
    \subfigure[$X_{5;1} \cong S^3$]{\label{fig:X5_1}
      \includegraphics[height=2.1cm]{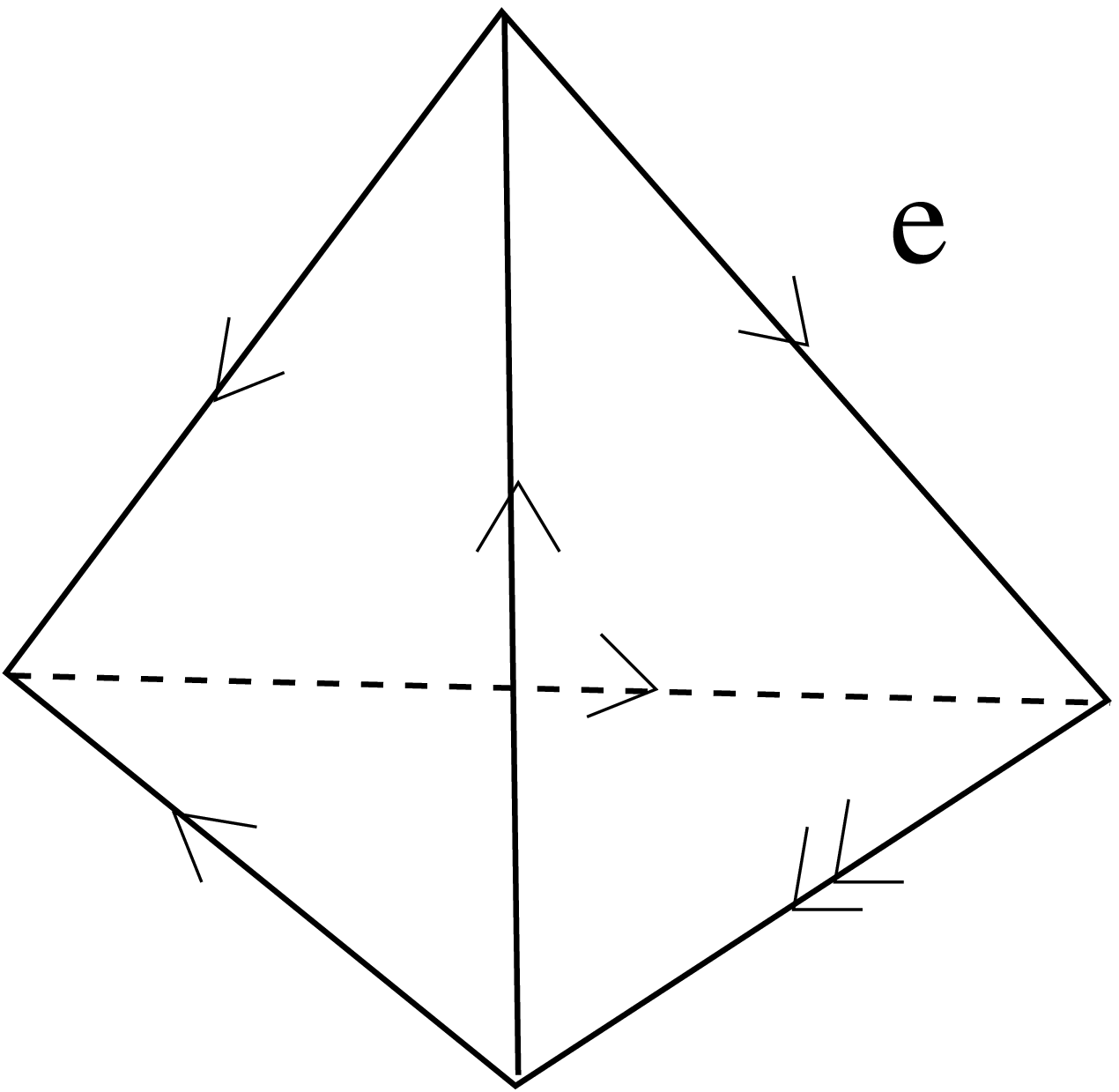}
    } 
    \quad
    \subfigure[$X^0_{5;2} \cong L(3,1)$]{\label{fig:X5_20}
      \includegraphics[height=2.1cm]{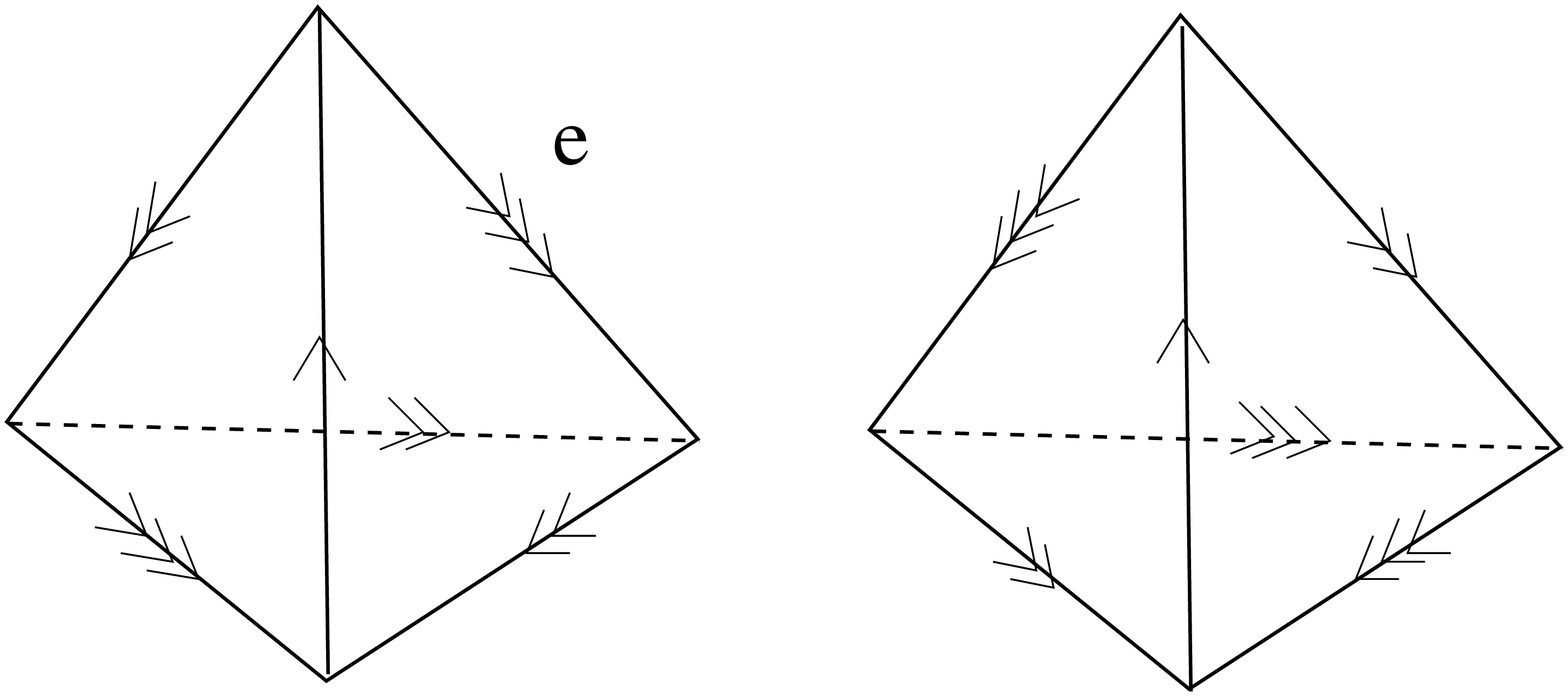}
    } 
   \quad
           \subfigure[$X^1_{5;2}\cong L(7,2)$]{\label{fig:X5_21}
      \includegraphics[height=2.1cm]{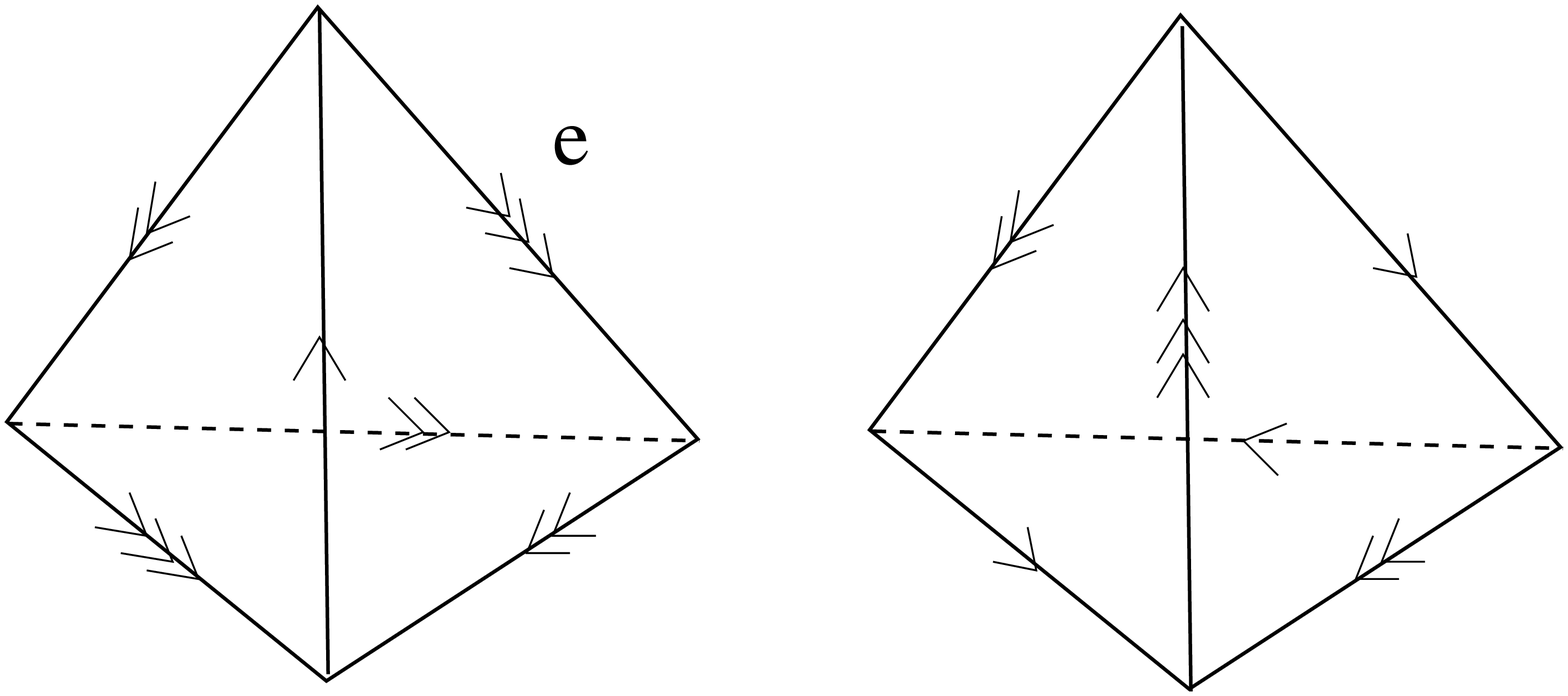}
    } 
   \\
           \subfigure[$X_{5;3}^0\cong$ solid torus]{\label{fig:X5_30}
      \includegraphics[height=2.3cm]{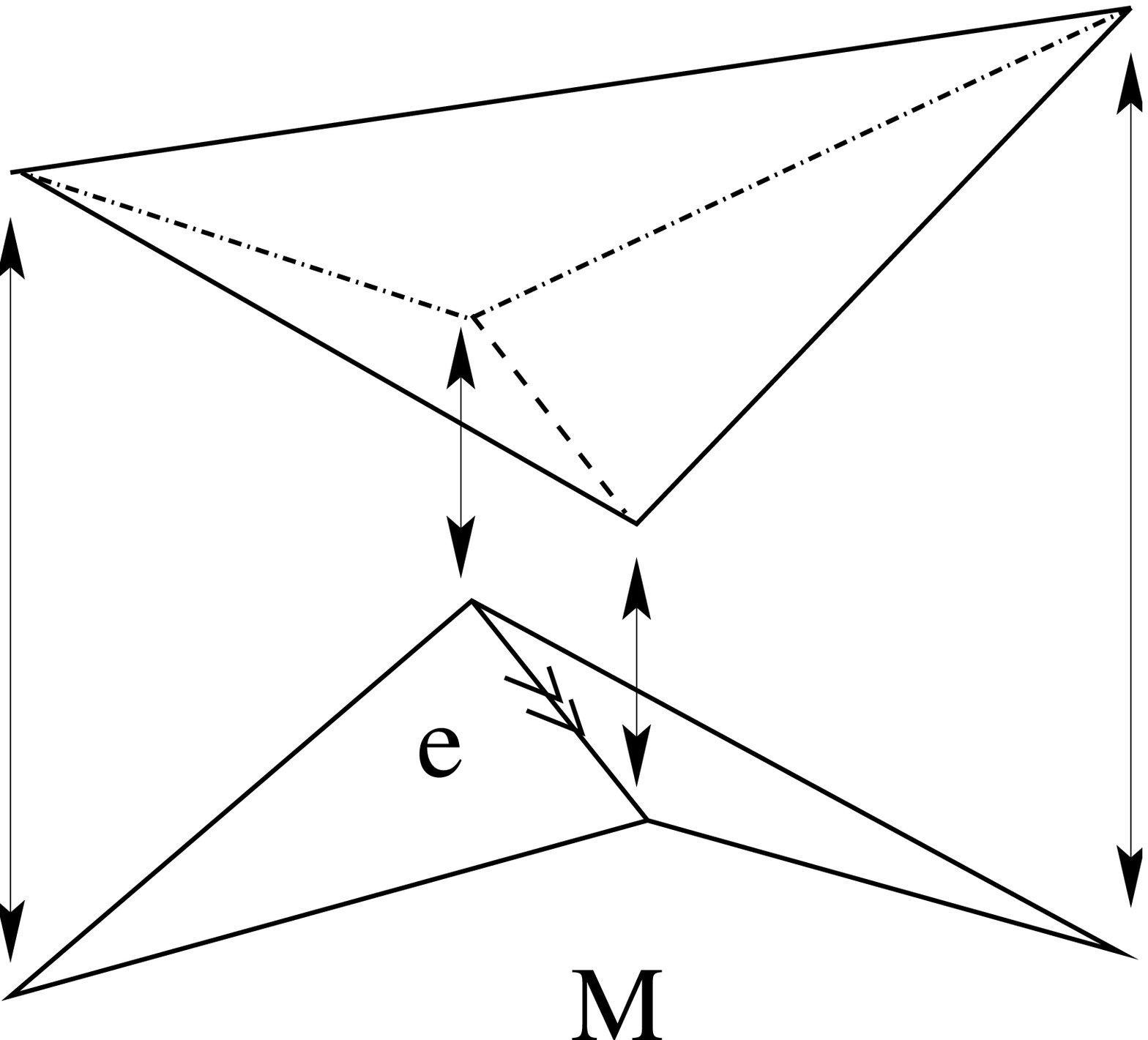}
    } 
    \quad
           \subfigure[$X_{5;3}^1\cong$ solid torus]{\label{fig:X5_31}
      \includegraphics[height=2.3cm]{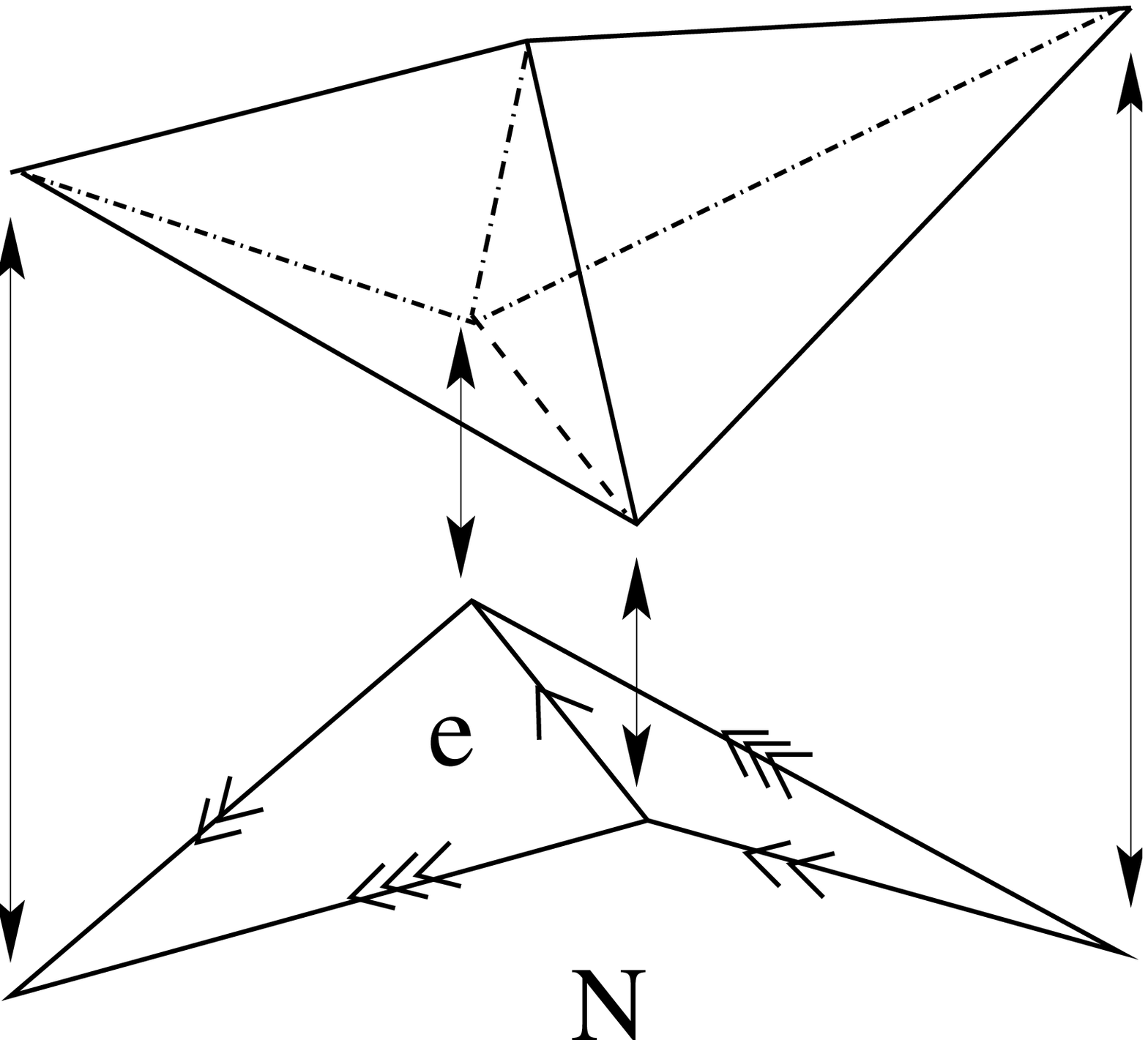}
    } 
    \quad
    \subfigure[$X^2_{5;3}\cong$ solid torus]{\label{fig:X5_32}
      \includegraphics[height=2.2cm]{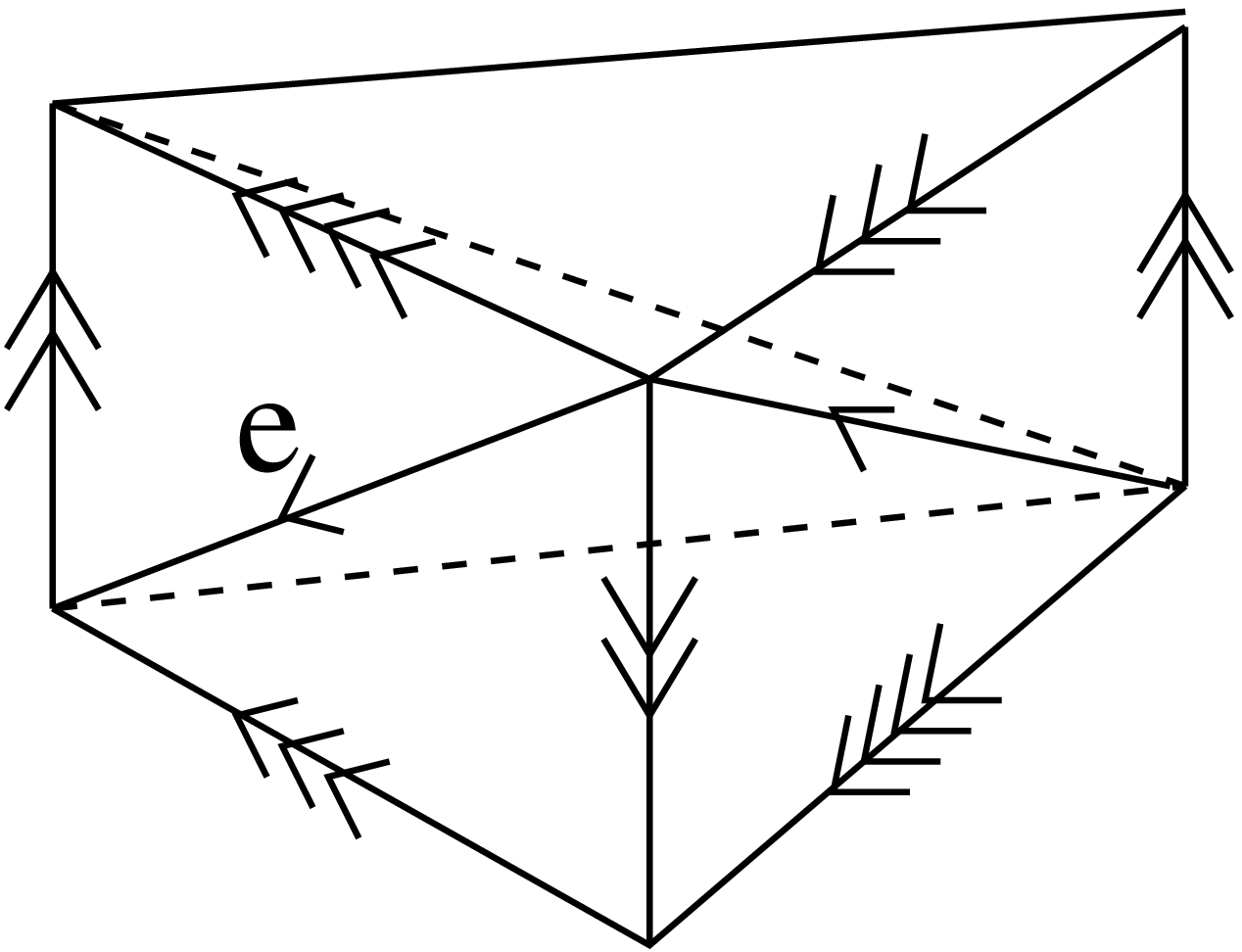}
    } 
    \quad
    \subfigure[$X^0_{5;4}\cong$ solid torus]{\label{fig:X5_40}
      \includegraphics[height=2.4cm]{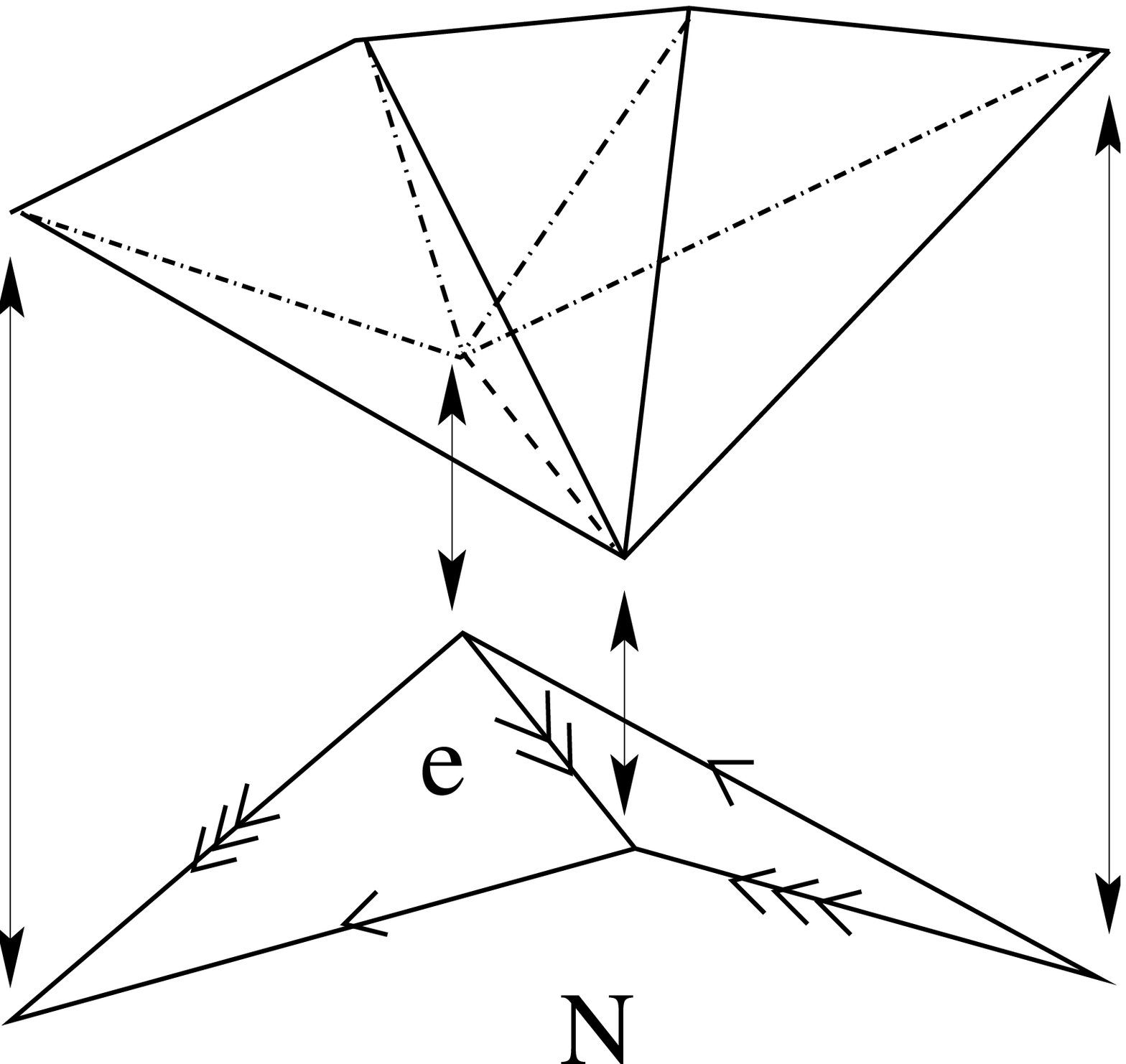}
    } 
     \\
    \subfigure[$X^1_{5;4}\cong$ solid torus]{\label{fig:X5_41}
      \includegraphics[height=2.3cm]{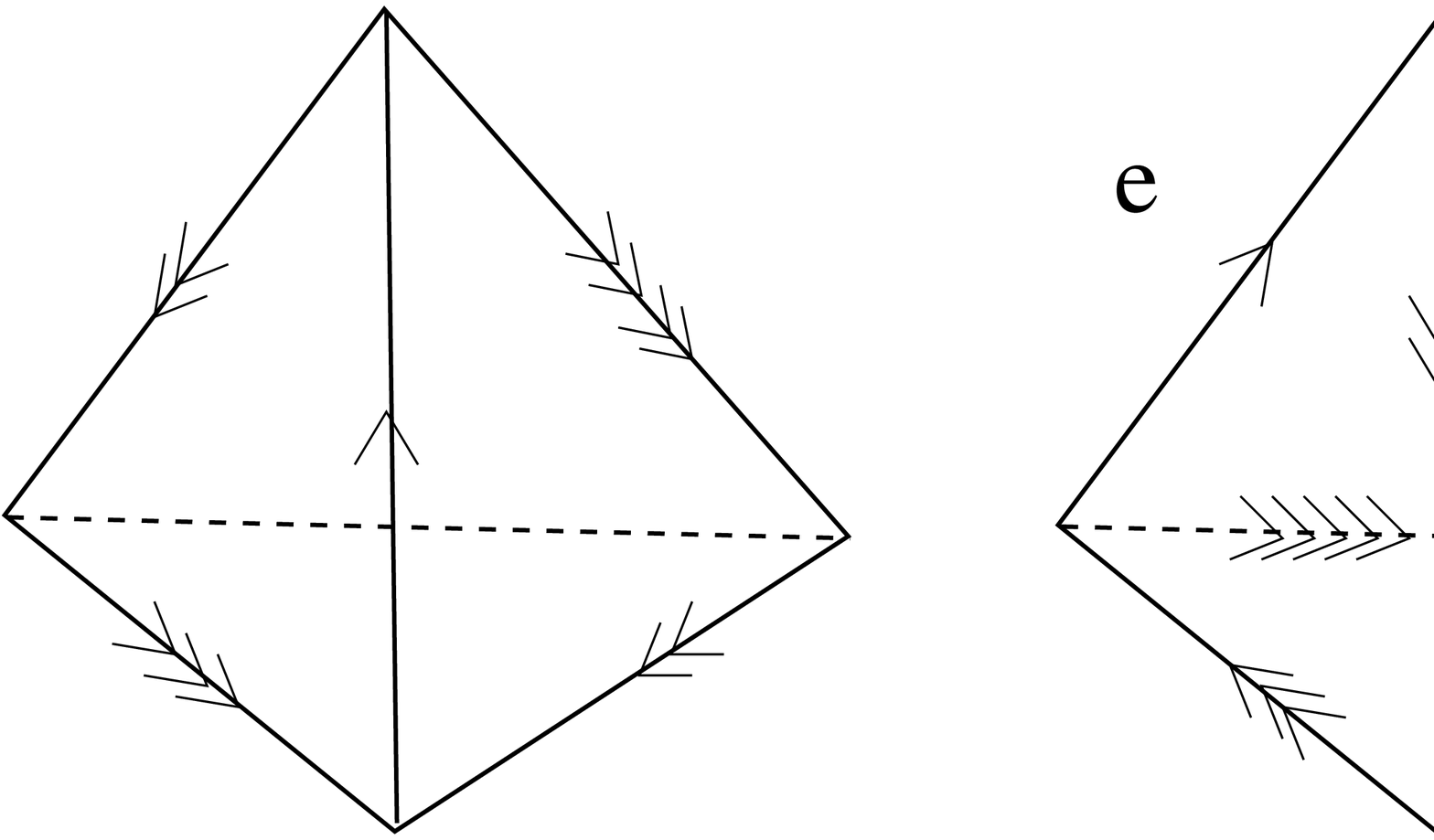}
    } 
     \qquad
         \subfigure[$X_{5;5}\cong$ 3--ball]{\label{fig:X5_5}
      \includegraphics[height=2.5cm, width=3cm]{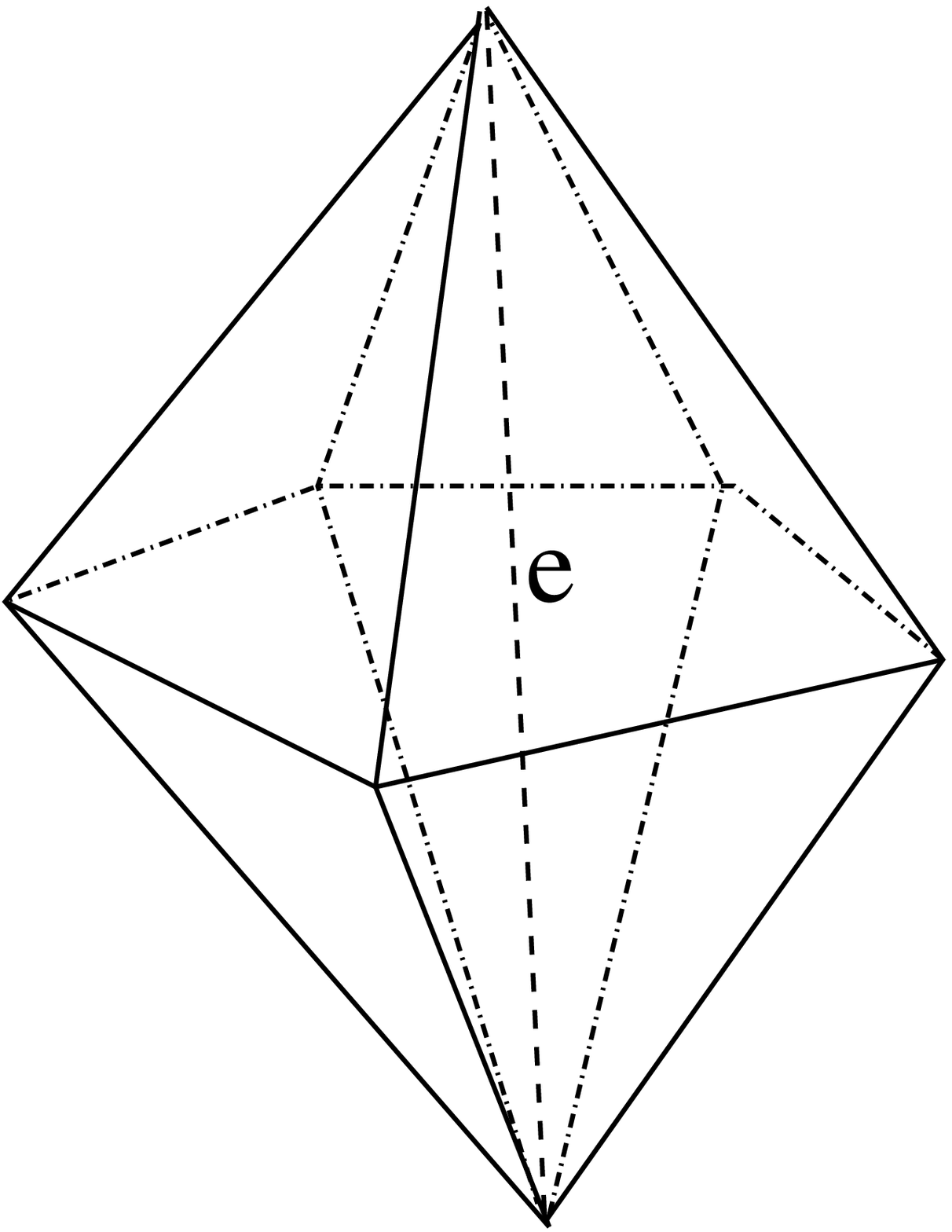}
    } 
\end{center}
    \caption{Degree five edges in minimal and 0--efficient triangulations}
     \label{fig:degree five edges}
\end{figure}

\begin{proof}
The proof follows the same line of argument as the previous two propositions' proofs. We only highlight the main points that are different. Firstly, in the case where $|\widetilde{\Delta}_e| = 2$ and one arrives at the triangulation of $L(3,1),$ minimality does not imply 0--efficiency. The latter property can be verified by computing the set of all connected normal surfaces of Euler characteristic equal to two.

In the case where $|\widetilde{\Delta}_e| = 2,$ there is a subcase where $\widetilde{\sigma}_1$ contains one pre-image of $\overline{e},$ and $\widetilde{\sigma}_2$ and $\widetilde{\sigma}_3$ contain two each. If two pre-images are contained on a common face, then one obtains $X^2_{5;3}.$ Otherwise there are two subsubcases. First, one assumes that each of $\widetilde{\sigma}_2$ and $\widetilde{\sigma}_3$ has two of its faces identified. Then each is equivalent to $\lst_1,$ and these two subcomplexes must meet in a face. One thus obtains a pinched 2--sphere made up of the two faces of $\widetilde{\sigma}_1$ which meet $\widetilde{\sigma}_2$ and $\widetilde{\sigma}_3$ respectively. This is not possible due to minimality. Hence assume that
precisely one of $\widetilde{\sigma}_2$ and $\widetilde{\sigma}_3$ is combinatorially equivalent to $\lst_1.$ Analysing the possibilities gives $X^0_{5;3}.$ Last, assume that none of $\widetilde{\sigma}_2$ and $\widetilde{\sigma}_3$ has two of its faces identified. Analysing all possible gluings of the remaining faces, one obtains a 3--tetrahedron complex whose boundary either consists of two faces which form a pinched 2--sphere, or one of whose boundary faces is a cone or a dunce hat. In either case, one obtains a contradiction.
\end{proof}


{

\section{Normal surfaces dual to $\mathbf{\Z_2}$--cohomology classes}
\label{sec:co-homology classes}

Throughout this section, let $\tri$ be an arbitrary 1--vertex triangulation of the closed 3--manifold $M,$ and $\varphi \co \pi_1(M) \to \Z_2$ be a non--trivial homomorphism. Additional hypotheses will be stated. A colouring of edges arising from $\varphi$ is introduced and a canonical normal surface dual to $\varphi$ is determined. This yields a combinatorial constraint on the triangulation, which is then specialised to a family of lens spaces.


\subsection{Colouring of edges and dual normal surface}
\label{subsec:colouring}

\begin{figure}[t]
\begin{center}
      \includegraphics[height=3cm]{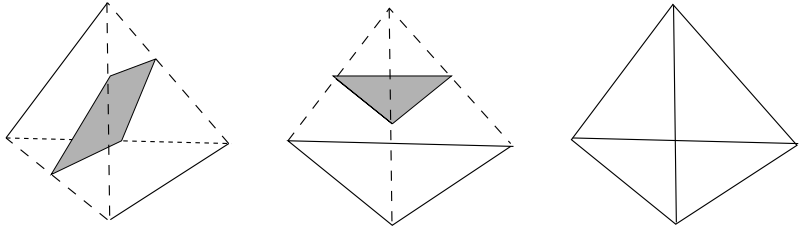}
\end{center}
    \caption{Types of tetrahedra and normal discs of the dual surface}
     \label{fig:Z2-homology class}
\end{figure}

Each edge, $e,$ is given a fixed orientation, and hence represents an element $[e]\in \pi_1(M).$ If $\varphi[e]=0,$ the edge is termed $\varphi$--even, otherwise it is termed $\varphi$--odd. This terminology is independent of the chosen orientation for $e.$ Faces in the triangulation give relations between loops represented by edges. It follows that a tetrahedron falls into one of the following categories, which are illustrated in Figure~\ref{fig:Z2-homology class}:
\begin{itemize}
\item[] Type 1: A pair of opposite edges are $\varphi$--even, all others are $\varphi$--odd.
\item[] Type 2: The three edges incident to a vertex are $\varphi$--odd, all others are $\varphi$--even.
\item[] Type 3: All edges are $\varphi$--even.
\end{itemize}

It follows from the classification of the tetrahedra in $\tri$ that, if $\varphi$ is non-trivial, then one obtains a unique normal surface, $S_\varphi(\tri),$ with respect to $\tri$ by introducing a single vertex on each $\varphi$--odd edge. This surface is disjoint from the tetrahedra of type 3; it meets each tetrahedron of type 2 in a single triangle meeting all $\varphi$--odd edges; and each tetrahedron of type 1 in a single quadrilateral dual to the $\varphi$--even edges. Moreover, $S_\varphi(\tri)$ is dual to the $\Z_2$--cohomology class represented by $\varphi.$


\subsection{Combinatorial bounds for triangulations}

The set-up and notation of the previous subsection is continued. Let
\begin{itemize}
\item[] $A(\tri)=$ number of tetrahedra of type 1,
\item[] $B(\tri)=$ number of tetrahedra of type 2,
\item[] $C(\tri)=$ number of tetrahedra of type 3,
\item[] $\odd(\tri)=$ number of \emph{$\varphi$--odd} edges,
\item[] $\even(\tri)=$ number of \emph{$\varphi$--even} edges,
\item[] $\tilde{\even}(\tri)=$ number of pre-images of $\varphi$--even edges in $\widetilde{\Delta}.$
\end{itemize}
Assume that $\tri$ contains $T(\tri)$ tetrahedra. Then $T(\tri)= A(\tri)+B(\tri)+C(\tri).$ For the remainder of this subsection, we will write $A = A(\tri),$ etc. The complex $K$ in $M$ spanned by all $\varphi$--even edges in $\tri$ is homotopy equivalent to the complement, $N,$ of a regular neighbourhood of $S_\varphi$ in $M.$ We therefore have:
$$
2 \chi (S_\varphi)  = \chi (\partial N) = 2 \chi (N) = 2 \chi (K).
$$
The Euler characteristic of $K$ can be computed from the combinatorial data. We have:
$$
\chi (K) = 1 - \even + \frac{B+4C}{2} - C,
$$
and hence:
\begin{equation}\label{eq:e}
2C + B = 2 \even - 2 + 2  \chi(S_\varphi).
\end{equation}
So:
\begin{align}\label{inequ for one vert tri gen}
\tilde{\even} 	&= 2A+3B+6C \nonumber \\
		&=  2A + B + 2C + 4 \even -4 + 4 \chi(S_\varphi)\nonumber\\
		&\le 2T - 4   + 4 \chi(S_\varphi) + 4 \even.
\end{align}
\begin{lemma}
Let $M$ be a closed, orientable, irreducible 3--manifold which is not homeomorphic to one of $\R P^3,$ $L(4,1),$ and $\varphi \co \pi_1(M) \to \Z_2$ be a non--trivial homomorphism. Suppose that $\tri$ is a minimal triangulation with $T$ tetrahedra, and let $S_\varphi$ be the canonical normal surface dual to $\varphi.$ Then
\begin{equation}\label{inequ for min tri general}
\even_3 \ge 4 - 2T - 4 \chi(S_\varphi) + \sum_{j=5}^{\infty} (j-4) \even_j,
\end{equation}
where $\even_i$ is the number of $\varphi$--even edges of degree $i.$
\end{lemma}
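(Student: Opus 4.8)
The plan is to read off the desired bound directly from the inequality \eqref{inequ for one vert tri gen} established above, once $\tilde{\even}$ is re-expressed through the degrees of the $\varphi$--even edges. The only substantive input is that, under the stated hypotheses, there are no $\varphi$--even edges of degree one or two, so that the relevant sums begin at degree three; everything else is a direct substitution.

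First I would verify that $\tri$ is $0$--efficient and has a single vertex, so that the entire set-up of Subsection~\ref{subsec:colouring}, and in particular \eqref{eq:e} and \eqref{inequ for one vert tri gen}, applies. A nontrivial homomorphism $\varphi\co\pi_1(M)\to\Z_2$ cannot exist when $\pi_1(M)$ admits no $\Z_2$--quotient; in particular $M$ is neither $S^3$ (with $\pi_1=1$) nor $L(3,1)$ (with $\pi_1=\Z_3$). Together with the hypothesis $M\neq\R P^3$, this removes $M$ from the exceptional list, so a minimal triangulation is $0$--efficient; and since $M\neq S^3$, a $0$--efficient triangulation has exactly one vertex.

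Next I would eliminate low--degree even edges. By Proposition~\ref{pro:degree one and two edges}, the minimal and $0$--efficient triangulation $\tri$ has no edge of degree one (as $M\neq S^3$) and no edge of degree two (as $M\neq L(3,1),L(4,1)$, the latter by hypothesis). Hence every edge, and in particular every $\varphi$--even edge, has degree at least three, so $\even_1=\even_2=0$. The counting is then immediate: each $\varphi$--even edge of degree $i$ contributes exactly $i$ pre-images in $\widetilde{\Delta}$, while $\even=\sum_i\even_i$, so $\tilde{\even}=\sum_{i\ge3} i\,\even_i$ and $\even=\sum_{i\ge3}\even_i$. Substituting these into \eqref{inequ for one vert tri gen} and moving the term $4\even$ to the left gives
\[
\sum_{i\ge3}(i-4)\,\even_i \;\le\; 2T-4+4\chi(S_\varphi).
\]
Splitting off the $i=3$ and $i=4$ terms, the left side equals $-\even_3+\sum_{j\ge5}(j-4)\even_j$, and solving for $\even_3$ yields exactly \eqref{inequ for min tri general}.

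The only delicate point is the bookkeeping that licenses applying the results of Section~\ref{sec:low degree edges}: one must confirm that the existence of $\varphi$ already excludes $S^3$ and $L(3,1)$, so that only $\R P^3$ and $L(4,1)$ must be excluded by hand, and that minimality then propagates to $0$--efficiency and to the one-vertex property underlying \eqref{eq:e} and \eqref{inequ for one vert tri gen}. Once these exclusions are in place the remainder is a direct substitution, and I do not expect any substantial obstacle beyond this verification.
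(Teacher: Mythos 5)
Your proposal is correct and follows essentially the same route as the paper's proof: both use the existence of the non-trivial $\varphi$ to exclude $S^3$ and $L(3,1)$ (hence obtaining a one-vertex, $0$--efficient triangulation), invoke the degree-one/two exclusion so that all edges have degree at least three, and then substitute $\tilde{\even}=\sum i\,\even_i$ and $\even=\sum\even_i$ into the inequality \eqref{inequ for one vert tri gen}. The algebraic rearrangement you carry out is exactly what the paper leaves implicit, so there is nothing to add.
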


\begin{proof}
First note that the existence of $\varphi$ implies that $M \neq S^3, L(3,1).$ It now follows from \cite{JR}, Theorem 6.1, that $\tri$ has a single vertex; hence $S_\varphi$ and $\even_i$ are defined. Moreover, \cite{JR}, Proposition 6.3 (see also Proposition \ref{pro:degree one and two edges}), implies that the smallest degree of an edge in $\tri$ is three. One has $\tilde{\even} = \sum i \even_i$ and $\even = \sum \even_i.$ Putting this into (\ref{inequ for one vert tri gen}) gives the desired inequality.
\end{proof}


\subsection{Combinatorial bounds for certain lens spaces}
\label{subsec:comb bds}

Let $M=L(2n,q)$ with a minimal triangulation, $\tri,$ and assume $M \neq \R P^3$ or $L(4,1).$ There is a unique non--trivial homomorphism $\varphi \co \pi_1 M \to \Z_2.$ It is known through work by Bredon and Wood \cite{BW} and the second author \cite{Rubin1978} that, up to isotopy, there is a unique incompressible non-orientable surface $S_0$ in $L(2n,q).$ The surface $S_\varphi(\tri)$ must be non-orientable since it is dual to a non-trivial $\Z_2$--cohomology class and there is no non-trivial $\Z$--cohomology class. It therefore compresses to the surface $S_0,$ so 
$$\chi(S_0) \ge \chi (S_\varphi(\tri)).$$
The smallest degree of an edge in $\tri$ is three. Denote by $\lay_{2n,q}$ the minimal layered triangulation of $M=L(2n,q).$ Then Theorem~8.2 of \cite{JR:LT} implies that 
$$\chi (S_\varphi(\tri)) \le \chi(S_0) = \chi (S_\varphi(\lay_{2n,q}))= 1 - \even(\lay_{2n,q}).$$
We also have that the number of tetrahedra in $\tri$ satisfies 
$$ T(\tri) \le \even(\lay_{2n,q}) + \odd(\lay_{2n,q}) - 1.$$
Hence
\begin{equation}\label{inequ:odd and even}
4 - 2T(\tri) - 4 \chi(S_\varphi(\tri)) \ge 2 + 2 \even(\lay_{2n,q}) - 2 \odd(\lay_{2n,q}).
\end{equation}
Now \emph{assuming} that $\even(\lay_{2n,q}) \ge \odd(\lay_{2n,q}),$ inequality (\ref{inequ for min tri general}) becomes:
\begin{equation}\label{inequ for min tri}
\even_3 \ge 2 + \sum_{j=5}^{\infty} (j-4) \even_j.
\end{equation}
We will show that this inequality forces $\tri=\lay_{2n,q}$ as well as $\even(\lay_{2n,q}) = \odd(\lay_{2n,q}).$

The set of all lens spaces $L(2n,q)$ having minimal layered triangulations satisfying $\even(\lay_{2n,q}) \ge \odd(\lay_{2n,q})$ is non-empty: an edge in the triangulation $\lay_{2n,1}=\lay_k$ of $L(2n,1)=L(k+3,1)$ is $\varphi$--even if and only if it is even; and hence odd if and only if it is $\varphi$--odd. Whence $\even(\lay_k) = \odd(\lay_k).$


\section{Intersections of maximal layered solid tori}
\label{sec:Intersections of maximal layered solid tori}

Throughout this section, assume that $M$ is an irreducible, orientable, connected 3--manifold with a fixed triangulation, $\tri.$ Further assumptions will be stated. We prove some general facts about layered solid tori and maximal layered solid tori in such a triangulation.


\subsection{Layered solid tori}

\begin{definition}(Layered solid torus)
A \emph{layered solid torus with respect to $\tri$} in $M$ is a subcomplex in $M$ which is combinatorially equivalent to a layered solid torus. Any reference to $\tri$ is suppressed when $\tri$ is fixed. The edges of the layered solid torus $T$ are termed as follows: If there is more than one tetrahedron, then there is a unique edge which has been layered on first, termed the \emph{base-edge (of $T$).} The edges in the boundary of $T$ are called \emph{boundary edges (of $T$)} and all other edges (including the base-edge) are termed  \emph{interior edges (of $T$).} 
\end{definition}

For every edge in $M,$ we will also refer to its degree as its $M$--degree, and its degree with respect to the layered solid torus $T$ in $M$ is called its $T$--degree. The $T$--degree of an edge not contained in $T$ is zero. The unique edge of $T$--degree one is termed a \emph{univalent edge (for $T$)}. Clearly, $T$--degree and $M$--degree agree for all interior edges of $T.$

\begin{lemma}\label{lem:meet in 2 faces}
If the intersection of two layered solid tori consists of two faces, then $M$ is a lens space with layered triangulation.
\end{lemma}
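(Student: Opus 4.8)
The claim is that if two layered solid tori $T_1$ and $T_2$ in the triangulation $\tri$ of $M$ meet exactly in two faces, then $M$ is a lens space carrying a layered triangulation. The plan is to analyze the union $T_1 \cup T_2$ and show that the two faces of intersection must be precisely the two boundary faces of each layered solid torus, so that $T_1 \cup T_2$ is the entire manifold $M$, built by gluing the boundary tori of $T_1$ and $T_2$ together along their common two-triangle boundary. Once this is established, $M$ is the union of two solid tori glued along a common torus boundary, which is by definition a lens space (or $S^3$, $S^1 \times S^2$), and the triangulation is layered since both pieces are layered solid tori.

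**First step: locating the intersection.** Recall from Section~\ref{sec:Layered triangulations of the solid torus and lens spaces} that the boundary of a layered solid torus is a one-vertex triangulation of the torus consisting of exactly two faces meeting along three edges. So the total number of faces in $\partial T_i$ is two. Since $T_1$ and $T_2$ are subcomplexes meeting in two faces, and any face of $\tri$ lies in at most two tetrahedra (as $M$ is closed or the face is on $\partial M$), the first thing I would verify is that the two common faces are boundary faces of \emph{both} $T_1$ and $T_2$: an interior face of a layered solid torus is glued to another face of the \emph{same} torus, so it cannot be shared with a second subcomplex without forcing extra identifications beyond the stated intersection. This pins the two common faces to be exactly the boundary faces of each $T_i$.

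**Second step: gluing the boundaries.** Having identified the two common faces as the boundary faces of both $T_1$ and $T_2$, the intersection $T_1 \cap T_2$ is precisely $\partial T_1 = \partial T_2$, a single torus $F$ (the two faces together with their three boundary edges and common vertex form a torus). Then $M = T_1 \cup_F T_2$ is obtained by gluing two solid tori along their entire boundary torus via a triangulation-respecting homeomorphism. This is exactly a genus-one Heegaard splitting of $M$, so $M$ is a lens space (allowing the degenerate cases $S^3$ and $S^1\times S^2$, which are lens spaces in the broad sense). The resulting triangulation is layered because it is assembled entirely from the layered pieces $T_1$ and $T_2$ identified along a torus.

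**Main obstacle.** The delicate point — and the step I expect to require the most care — is ruling out that the two shared faces could be an interior face of one torus matched against a boundary face of the other, or some configuration where the intersection is two faces but these do not assemble into the \emph{full} common boundary torus. I would handle this by a careful incidence count: each boundary face of $T_i$ has all its edges among the three boundary edges of $\partial T_i$, whereas an interior face meets at least one interior edge; tracking which edges the two common faces meet, together with the constraint that the intersection is \emph{exactly} two faces (no more identifications), should force the clean ``boundary-to-boundary'' gluing and exclude the pathological cases. Verifying that the edge and vertex identifications forced by matching the two faces are consistent with $T_1\cap T_2$ being no larger than two faces is where the real combinatorial bookkeeping lies.
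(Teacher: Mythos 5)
Your first step (the two shared faces must be the boundary faces of both tori) matches the paper, but there are two genuine gaps after that. First, you assume that identifying the two pairs of boundary faces automatically identifies $\partial T_1$ with $\partial T_2$ as tori, so that $M = T_1 \cup_F T_2$ is a genus-one Heegaard splitting. This is not automatic: the two face pairings each induce identifications of the three boundary edges of $T_1$ with the three boundary edges of $T_2$, and these two induced identifications need not be consistent with a single homeomorphism of the boundary tori. If they are not, the quotient is not two solid tori glued along a torus at all. The paper treats this as a separate case and disposes of it by checking that (using orientability of $M$) any such inconsistent pairing forces some edge to be folded back onto itself, which is impossible; your ``main obstacle'' paragraph worries about interior-versus-boundary faces but misses this case entirely.

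Second, and more seriously, your justification that the resulting triangulation is \emph{layered} --- ``because it is assembled entirely from the layered pieces $T_1$ and $T_2$'' --- is not the definition and does not prove the claim. A layered triangulation of a lens space is, by the paper's definition, obtained from a layered triangulation of a \emph{single} solid torus by folding along one boundary edge (identifying its two boundary faces with each other); it is not defined as a union of two layered solid tori along a torus. The paper closes this gap with an inductive argument: the tetrahedron $\sigma$ of $T_1$ adjacent to $T_2$ is layered on a boundary edge of $T_2$, so either its two remaining faces are identified with each other (forcing $\sigma = T_1$ and exhibiting the fold that makes the triangulation layered) or $T_2 \cup \sigma$ is again a layered solid torus and one transfers $\sigma$ from $T_1$ to $T_2$ and repeats. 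Without some version of this peeling argument your proof establishes only that $M$ is a lens space, not that its triangulation is layered, which is the substantive content of the lemma.
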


\begin{proof}
If two layered solid tori, $T_1$ and $T_2,$ meet in precisely two faces, then these must be their respective boundary faces. Assume that the identification of the faces extends to a homeomorphism of the boundary tori. Whence $M$ is a lens space. If tetrahedron $\sigma$ in $T_1$ meets $T_2,$ then $\sigma$ is layered on a boundary edge of $T_2$ and $T_2 \cup \sigma$ is a layered solid torus unless there are further identifications of the two faces of $\sigma$ not meeting $T_2.$ But this forces $\sigma = T_1$ and $M$ is a lens space with layered triangulation. Otherwise add $\sigma$ to $T_2$ and subtract it from $T_1;$ the result now follows inductively.

Hence assume that the identification of the faces does not extend to a homeomorphism of the boundary tori. Examining the resulting face pairings (using the fact that $M$ is orientable), one observes that an edge folds back onto itself, which is not possible.
\end{proof}

\begin{lemma} \label{lem:meet in tet}
If two layered solid tori share a tetrahedron, then either one contains the other or $M$ is a lens space with layered triangulation.
\end{lemma}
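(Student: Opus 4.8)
The plan is to exploit the combinatorial rigidity of layered solid tori to show that $T_1$ and $T_2$ must coincide on a common sub-stack of tetrahedra, and then to analyse how each of them extends beyond this common part. I would begin by recording two facts that hold inside any layered solid torus and, because the face-pairings of a subcomplex are inherited from $\tri,$ depend only on $M$: first, a tetrahedron is the base of the layered solid torus precisely when a pair of its faces is identified with each other; second, two tetrahedra of a layered solid torus share exactly two faces when they are consecutive in the layering and share no face otherwise. In particular, whether a given tetrahedron is a base, and which pairs of its faces meet a neighbour, is determined intrinsically in $M.$

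The heart of the argument is an \emph{alignment} statement. Suppose $\sigma$ and $\sigma'$ are consecutive in $T_1$ and both lie in $T_2.$ They share two faces of $\tri,$ and since both tetrahedra lie in $T_2$ these two faces are interior to $T_2;$ hence $\sigma$ and $\sigma'$ share two faces of $T_2$ as well and, by the second fact, are consecutive in $T_2.$ Likewise a shared tetrahedron carrying a self-identification is the base of both solid tori. Consequently the component $C$ of the given shared tetrahedron in $T_1\cap T_2$ is simultaneously a contiguous sub-stack of $T_1$ and of $T_2,$ the two layered structures restrict to the same structure on $C$ (up to reversing the direction of traversal), and each end of $C$ is either a pair of free faces or the common base. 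The one point needing care here is a middle tetrahedron, where one checks that its four $\tri$-neighbours are two distinct tetrahedra each occurring twice, so that the partition of its faces into the predecessor-pair and successor-pair is forced.

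I would then take $C$ maximal and examine its two ends. At a free end the two free faces meet along a boundary edge, and the face-pairings of $\tri$ determine a unique tetrahedron layered across them; so if both $T_1$ and $T_2$ extended past that end, their next tetrahedra would coincide and would lie in $T_1\cap T_2,$ contradicting maximality of $C.$ Hence at most one of $T_1,T_2$ extends at each free end, and neither extends past the base if $C$ contains it. If one of them, say $T_2,$ extends at neither end, then $T_2=C\subseteq T_1$ and we are done. Otherwise each of $T_1,T_2$ extends somewhere; since at most one extends per end, they must extend at opposite free ends $E_1,E_2,$ with $T_1$ capped (by its own boundary) at $E_2$ and $T_2$ capped at $E_1.$

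In this remaining case let $T_1'=T_1\setminus C$ be the part of the stack of $T_1$ lying beyond the end $E_1$ at which $T_1$ extends. It contains the base of $T_1$ and is therefore itself a layered solid torus, attached to $C\subseteq T_2$ along the two faces at $E_1.$ As the tetrahedra of $T_1'$ are disjoint from those of $T_2,$ the layered solid tori $T_1'$ and $T_2$ meet in exactly these two faces, so Lemma~\ref{lem:meet in 2 faces} shows that $M$ is a lens space with a layered triangulation. (Equivalently, reading $C$ with both extensions as a single stack exhibits $T_1\cup T_2$ as $T_1$ with the tetrahedra of $T_2\setminus C$ layered on and then folded at the base of $T_2,$ a closed layered triangulation of a lens space.) I expect the alignment step to be the main obstacle: one must rule out $\sigma$ playing genuinely different combinatorial roles in $T_1$ and in $T_2,$ and it is precisely the two rigidity facts above, together with the global nature of the face-pairings of $\tri,$ that force the two layered structures to match on $C,$ including the pairing up of the two possibly oppositely oriented traversals.
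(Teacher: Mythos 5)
Your argument is correct and follows essentially the same route as the paper: the paper's proof simply observes that the closure of $T_1\setminus(T_1\cap T_2)$ is a nonempty layered solid torus meeting $T_2$ in precisely two faces and then invokes Lemma~\ref{lem:meet in 2 faces}. Your alignment analysis supplies the combinatorial justification for that assertion which the paper leaves implicit, but the underlying reduction is identical.
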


\begin{proof}
Assume that one is not contained in the other, and denote the layered solid tori by $T_1$ and $T_2.$ Then the closure of $T_1 \setminus (T_1 \cap T_2)$ is non-empty and a layered solid torus. It meets $T_2$ in precisely two faces and hence Lemma \ref{lem:meet in 2 faces} yields the result.
\end{proof}

\begin{lemma}\label{lem:layered solid tori don't meet in single face}
If the triangulation is minimal and 0--efficient, then the intersection of two layered solid tori cannot consist of a single face.
\end{lemma}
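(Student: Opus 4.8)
The plan is to assume that $T_1\cap T_2$ is a single face $f$ and derive a contradiction by producing a barrier $2$--sphere, exactly in the spirit of case~(5) in the proof of Proposition~\ref{pro:degree three edges}. First I would pin down the local picture. The face $f$ cannot be interior to $T_1$ (nor to $T_2$): an interior face has both of its adjacent tetrahedra inside $T_1$, and since $f$ also lies in $T_2$ one of these tetrahedra would belong to $T_2$ as well, so $T_1$ and $T_2$ would share a tetrahedron and their intersection would properly contain $f$ (compare Lemma~\ref{lem:meet in tet}). Hence $f$ is a common boundary face, meeting a tetrahedron $\sigma_1\subset T_1$ on one side and a distinct tetrahedron $\sigma_2\subset T_2$ on the other. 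Writing $\partial T_i = f \cup g_i$ for the two boundary faces of $T_i$, the faces $g_1$ and $g_2$ are the only free faces of $T_1\cup T_2$, and $g_1\neq g_2$ in $M$, since otherwise that face would lie in $T_1\cap T_2$.

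The crux is an Euler characteristic computation for the pinched complex $T_1\cup T_2$. In a one--vertex triangulation every boundary face is a triangle whose three corners are all the single vertex $v$, so as a subspace of $M$ the face $f$ is a disc with three boundary points identified; thus $\chi(f)=1-3+1=-1$ rather than $+1$. Since $\chi(T_i)=0$, inclusion--exclusion gives $\chi(T_1\cup T_2)=0+0-(-1)=1$. Equivalently, a direct cell count with $t_1+t_2$ tetrahedra, $2t_1+2t_2+1$ faces, $t_1+t_2+1$ edges and one vertex yields $\chi(T_1\cup T_2)=1$. A regular neighbourhood $N(T_1\cup T_2)$ deformation retracts to $T_1\cup T_2$, so $\chi(N(T_1\cup T_2))=1$, and therefore its boundary $F$ satisfies $\chi(F)=2\chi(N(T_1\cup T_2))=2$. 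As $M$ is orientable, $F$ is an embedded $2$--sphere. I regard this observation---that the single--face gluing produces a sphere rather than a higher--genus surface, precisely because the shared face is pinched at the lone vertex---as the key point of the argument.

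With the sphere in hand I would run the barrier--surface argument. The single vertex $v$ lies in $T_1\cup T_2$, so the complement of $T_1\cup T_2$ contains no vertex of $\tri$ and hence no vertex--linking sphere. The sphere $F$ is a barrier surface (see \cite{JR}), so it shrinks to a normal surface in this complement; by $0$--efficiency every normal $2$--sphere is vertex--linking, which is impossible in a vertex--free region, so $F$ must bound a ball $B$ on the complementary side. Finally, as in case~(5) of Proposition~\ref{pro:degree three edges}, a homotopy takes $B$ to a disc and thereby identifies the two free faces $g_1$ and $g_2$. If $B$ contains at least one tetrahedron this produces a triangulation of $M$ with fewer tetrahedra, contradicting minimality; if $B$ contains no tetrahedron then $g_1$ and $g_2$ were already identified in $\tri$, so that $g_1=g_2$ would lie in $T_1\cap T_2$, contradicting $T_1\cap T_2=f$. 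Either way we reach a contradiction, and the single--face intersection is impossible.

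The step I expect to be the main obstacle is the last one: converting the ball $B$ bounded by $F$ into an honest reduction of the triangulation. Showing that the homotopy collapsing $B$ extends to a face identification yielding a \emph{valid} (pseudo--simplicial, one--vertex) triangulation with strictly fewer tetrahedra, and that orientability is respected throughout, is the delicate combinatorial content, and it is exactly the point where I would lean on the barrier--surface machinery of \cite{JR} already invoked in the degree--three analysis. The reduction to a common boundary face and the Euler characteristic computation are routine by comparison.
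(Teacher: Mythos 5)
Your proof is correct and follows essentially the same route as the paper's: the two free faces form a pinched $2$--sphere, the boundary of a regular neighbourhood of $T_1\cup T_2$ is an embedded $2$--sphere, and the barrier argument of Case (5) in the proof of Proposition~\ref{pro:degree three edges} contradicts minimality. The paper states this in three lines; your write-up merely supplies the details (locating $f$ as a common boundary face, the Euler characteristic count, and the final reduction) that the paper leaves implicit.
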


\begin{proof}
If $T_1$ and $T_2$ meet in a single face, then the free faces give rise to a pinched 2--sphere in $M.$ The boundary of a regular neighbourhood of $T_1 \cup T_2$ in $M$ is an embedded 2--sphere. As in the proof of Proposition~\ref{pro:degree three edges}, Case (5), a barrier argument gives a contradiction to minimality.
\end{proof}

\begin{lemma}
If the triangulation is minimal and 0--efficient, then the intersection of two layered solid tori cannot consists of three edges.
\end{lemma}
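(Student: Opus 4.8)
The plan is to reduce to the barrier-surface argument already used in Proposition~\ref{pro:degree three edges}, Case~(5), and in Lemma~\ref{lem:layered solid tori don't meet in single face}, the new input being an Euler characteristic computation that forces sphere components to appear.

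First I would pin down what ``intersection is three edges'' means combinatorially. If $e$ is an \emph{interior} edge of $T_1$, then its $M$--degree equals its $T_1$--degree, so every tetrahedron of $M$ meeting $e$ lies in $T_1$; were such an $e$ also an edge of $T_2$, then $T_2$ would share a tetrahedron with $T_1$, contradicting that $T_1\cap T_2$ is one--dimensional. Hence the three edges of $T_1\cap T_2$ are boundary edges of $T_1$, and by symmetry of $T_2$; as each boundary torus carries exactly three edges, $\partial T_1$ and $\partial T_2$ have identical $1$--skeleta but share no faces (and no tetrahedra). In particular each of the four boundary faces $f,f'$ of $T_1$ and $g,g'$ of $T_2$ is free in $Z:=T_1\cup T_2$, and each is glued in $M$ to a tetrahedron lying in neither $T_1$ nor $T_2$ (otherwise that face would lie in $T_1\cap T_2$); so the complement of $Z$ contains tetrahedra.

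Next I would compute the Euler characteristic of the boundary of a regular neighbourhood $N$ of $Z$ in $M.$ Since $T_1\cap T_2$ is a wedge of three loops at the single vertex, $\chi(T_1\cap T_2)=1-3=-2,$ and inclusion--exclusion gives $\chi(Z)=\chi(T_1)+\chi(T_2)-\chi(T_1\cap T_2)=0+0-(-2)=2.$ As $N$ is homotopy equivalent to $Z$ and is a compact $3$--manifold, $\chi(\partial N)=2\chi(N)=4.$ Because $M$ is orientable, $\partial N$ is a closed orientable surface of total Euler characteristic $4$; since no closed orientable surface has $\chi>2,$ the surface $\partial N$ is disconnected and must contain at least two $2$--sphere components. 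I would then run the barrier argument verbatim from Case~(5) of Proposition~\ref{pro:degree three edges}: the surface $\partial N$ is a barrier surface and shrinks to a stable normal surface in the complement of $Z;$ every normal $2$--sphere in a $0$--efficient triangulation is vertex linking, whereas the complement of $Z$ contains no vertex (the unique vertex lies in $Z$), so a sphere component of $\partial N$ must bound a $3$--ball $B$ in the complement of $Z.$ Since that complement carries tetrahedra, collapsing $B$ identifies a pair of free faces of $Z$ and yields a triangulation of $M$ with strictly fewer tetrahedra, contradicting minimality.

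The main obstacle is this final step. Unlike the two--faces case of Lemma~\ref{lem:meet in 2 faces}, the topology here is perfectly consistent (capping the sphere components of $\partial N$ with balls recovers $\chi(M)=0$), so the contradiction cannot be purely homotopy--theoretic and must be extracted from minimality through the barrier machinery. The one point requiring care is to guarantee that the ball $B$ genuinely contains tetrahedra, so that the collapse is an honest reduction; this is exactly what the observation of the first paragraph---that every free face of $Z$ abuts a complementary tetrahedron---supplies.
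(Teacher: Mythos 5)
Your proof is correct and takes essentially the same route as the paper's: the paper likewise observes that the boundary of a regular neighbourhood of $T_1\cup T_2$ consists of two embedded $2$--spheres and then ``evacuates'' exactly as in Lemma~\ref{lem:layered solid tori don't meet in single face} to contradict minimality. Your Euler characteristic computation merely makes explicit why sphere components must appear, a point the paper asserts without justification.
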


\begin{proof}
If $T_1$ and $T_2$ meet in precisely three edges, then the boundary of a regular neighbourhood of $T_1 \cup T_2$ in $M$ consists of two embedded 2--spheres. As in the above lemma, this can be evacuated, resulting in a smaller triangulation.
\end{proof}

\begin{lemma}
If the triangulation is minimal and 0--efficient, and the intersection of two layered solid tori consists of two edges, then $M$ is a lens space and the triangulation is a minimal layered triangulation.
\end{lemma}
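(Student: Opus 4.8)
The plan is to identify the complement of the two layered solid tori as a single tetrahedron, layer that tetrahedron onto one torus, and thereby reduce to the already-settled situation of Lemma~\ref{lem:meet in 2 faces}.

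First I would record the local picture. Write $T_1, T_2$ for the two layered solid tori and $X = T_1 \cup T_2$. Since the triangulation is minimal and $0$--efficient it has a single vertex $v$, so every edge is a loop based at $v$. The two edges of the intersection lie in $\partial T_1$ and in $\partial T_2$ and carry no common face or tetrahedron, hence they are boundary edges of both solid tori. Each $\partial T_i$ is a one--vertex torus with exactly three edges and two faces, every face being incident to all three edges; as meeting in three edges has just been excluded, the third boundary edges $g_1$ of $T_1$ and $g_2$ of $T_2$ are distinct. The free faces of $X$ are therefore precisely the two boundary faces $F_1,F_1'$ of $T_1$ and the two boundary faces $F_2,F_2'$ of $T_2$, and these bound the complement $Y=\overline{M\setminus X}$. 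Note that $Y$ is non--empty and contains tetrahedra, since gluing $F_1$ to $F_2$ would place a face in $T_1\cap T_2$.

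Next I would analyse the frontier $F=\partial N(X)$. Because $T_1\cap T_2$ is a wedge of two circles, $\chi(X)=\chi(T_1)+\chi(T_2)-\chi(T_1\cap T_2)=0+0-(-1)=1$, and as $N(X)$ retracts to $X$ we get $\chi(F)=2\chi(N(X))=2$. The surface $F$ is parallel to the frontier assembled from the four free faces $F_1,F_1',F_2,F_2'$; since each of these contains the edge $e$, this frontier is connected, so $F$ is a connected closed orientable surface of Euler characteristic two, i.e. a $2$--sphere. As in the proof of Proposition~\ref{pro:degree three edges}, Case~(5), $F$ is a barrier surface and shrinks to a stable normal surface in $Y$; since every normal $2$--sphere in a $0$--efficient triangulation is vertex linking while the only vertex lies on the frontier rather than in the interior of $Y$, this surface must bound a $3$--ball. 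Hence $Y$ is a ball whose boundary consists of four faces (combinatorially the boundary of a single $3$--simplex), and minimality forces $Y$ to be a single tetrahedron $\sigma$, for otherwise $Y$ could be retriangulated with fewer tetrahedra.

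Finally I would absorb $\sigma$. Two of its faces are glued to $F_1,F_1'$ and the remaining two to $F_2,F_2'$. Since any two faces of a tetrahedron meet in exactly one edge, gluing $\sigma$ to the two boundary faces of $T_1$ --- without a twist, by orientability --- is precisely a layering along a boundary edge of $T_1$. Thus $T_1'=T_1\cup\sigma$ is again a layered solid torus, whose two boundary faces are $F_2,F_2'$. Consequently $T_1'\cap T_2=F_2\cup F_2'$ consists of exactly two faces, and Lemma~\ref{lem:meet in 2 faces} shows that $M$ is a lens space with a layered triangulation. As a minimal layered triangulation is the layered triangulation with fewest tetrahedra and the given triangulation is both layered and minimal, it is the (unique) minimal layered triangulation. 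I expect the main obstacle to be the middle step: confirming that $F$ is a single $2$--sphere (ruling out spurious torus components) and that the barrier argument together with minimality leaves exactly one tetrahedron in the complement. The Euler characteristic count and the single--vertex face bookkeeping make the frontier combinatorially $\partial\Delta^3$, but the barrier argument must be run carefully with the vertex sitting on the frontier of $Y$.
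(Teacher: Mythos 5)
Your proof is correct and follows essentially the same route as the paper's: both arguments show that the frontier of a regular neighbourhood of $T_1\cup T_2$ is a $2$--sphere, run the barrier argument in the complement, and use minimality to force that complement to be a single tetrahedron, whence the triangulation is layered (the paper phrases the endgame as a homotopy identifying $\partial T_1$ with $\partial T_2$, while you layer the tetrahedron onto $T_1$ and quote Lemma~\ref{lem:meet in 2 faces}, which is a clean way to finish). The only spot where your stated justification is weaker than what is needed is the connectedness of the frontier: all four free faces containing a common edge does not by itself make their copies adjacent in the frontier surface, since the frontier is glued only along the gaps between $X$--tetrahedra in the edge links --- one should instead observe that the copies of $F_1,F_1'$ are joined along the copy of $g_1$ (because $g_1\notin T_2$, its link has a single gap) and that each gap in the link of a shared edge joins a $T_1$--face to a $T_2$--face --- but the conclusion is true and the paper asserts it without proof.
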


\begin{proof}
If $T_1$ and $T_2$ meet in two edges, they form a spine for the boundary of each solid torus. The boundary of a regular neighbourhood of $T_1 \cup T_2$ in $M$ is an embedded 2--sphere. As above, this 2--sphere shrinks to a point in the complement of $T_1 \cup T_2.$ Hence there is a homotopy in $M$ identifying the boundaries of $T_1$ and  $T_2.$ If this homotopy identifies the remaining edges, then one obtains a new triangulation having fewer tetrahedra. Hence assume that the homotopy does not identify the remaining edges. Then a triangulation is obtained by inserting a single tetrahedron. The complement of $T_1 \cup T_2$ consists therefore of exactly one tetrahedron and the triangulation is a minimal layered triangulation.
\end{proof}


\subsection{Maximal layered solid tori}

\begin{definition}(Maximal layered solid torus)
A layered solid torus is a \emph{maximal layered solid torus with respect to $\tri$} in $M$ if it is not strictly contained in any other layered solid torus in $M.$
\end{definition}

A layered triangulation of a lens space contains precisely two maximal layered solid tori. The lemmata of the previous section directly imply the following:

\begin{lemma}\label{lem:intersection of maximal layered solid tori}
Assume that the triangulation is minimal and 0--efficient. If $M$ is not a lens space with layered triangulation, then the intersection of two distinct maximal layered solid tori in $M$ consists of at most a single edge.
\end{lemma}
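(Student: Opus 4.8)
The plan is to run through the possible dimensions and sizes of the intersection $T_1 \cap T_2$ of two distinct maximal layered solid tori and to eliminate every case except a single edge by invoking the lemmata already established in this section. First I would record the structural observation that underlies everything: since $T_1$ and $T_2$ are maximal and distinct, neither is contained in the other, and for each $T_i$ every tetrahedron of $M$ meeting an interior edge or a non-boundary face of $T_i$ already lies in $T_i$ (immediate from the fact that $T$--degree and $M$--degree agree on interior edges, together with $M$ being closed). Consequently, if $T_1 \cap T_2$ contained a tetrahedron, a non-boundary face, or an interior edge of either torus, then the two tori would share a tetrahedron.

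Next I would dispose of the three-dimensional case. If $T_1$ and $T_2$ share a tetrahedron, then Lemma~\ref{lem:meet in tet} forces either one to contain the other (ruled out by maximality and distinctness) or $M$ to be a lens space with layered triangulation (ruled out by hypothesis). Hence no tetrahedron is shared, and by the observation above the intersection is confined to $\partial T_1 \cap \partial T_2$. Since each boundary torus is triangulated with exactly two faces and three edges, $T_1 \cap T_2$ can then contain at most two faces and at most three edges.

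I would then eliminate the two-dimensional possibilities: a two-face intersection is excluded because Lemma~\ref{lem:meet in 2 faces} would make $M$ a lens space with layered triangulation, and a single-face intersection is excluded directly by Lemma~\ref{lem:layered solid tori don't meet in single face}. So $T_1 \cap T_2$ contains no face and is at most one-dimensional. Finally, among the one-dimensional cases, a three-edge intersection is impossible by the lemma ruling out three-edge intersections of layered solid tori, and a two-edge intersection would force $M$ to be a lens space with minimal layered triangulation by the subsequent lemma; both contradict the standing hypothesis. What survives is precisely an intersection of at most one edge, as claimed.

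The argument is essentially bookkeeping once the preceding lemmata are in place, so I do not expect a serious obstacle. The one point that needs care --- and the step I would write out most carefully --- is the reduction to the boundaries: verifying that sharing \emph{any} interior cell (tetrahedron, non-boundary face, or interior edge) forces a common tetrahedron, so that two tori sharing no tetrahedron can meet only in boundary cells, capping the count at two faces and three edges. Everything else is a direct appeal to the lemmata of this section, which is what the phrase ``directly imply'' anticipates.
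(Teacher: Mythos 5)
Your proposal is correct and follows exactly the route the paper intends: the paper offers no written proof, asserting only that the lemmata of the section ``directly imply'' the statement, and your case analysis (rule out a shared tetrahedron via Lemma~\ref{lem:meet in tet}, confine the intersection to the boundary tori, then eliminate two faces, one face, three edges and two edges by the respective lemmata) is precisely that implication spelled out. The reduction you flag as needing care --- that sharing any interior cell forces a shared tetrahedron, so the intersection is capped at two faces and three edges --- is a sound and worthwhile addition.
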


\begin{lemma}\label{lem:layered lens char 2}
Assume that the triangulation is minimal and 0--efficient, and suppose that $M$ contains a layered solid torus, $T,$ made up of at least two tetrahedra and having a boundary edge, $e,$ which has degree four in $M.$ Then either
\begin{enumerate}
\item $T$ is not a maximal layered solid torus in $M;$ or
\item $e$ is the univalent edge for $T$ and it is contained in four distinct tetrahedra in $M;$ or 
\item $M$ is a lens space with minimal layered triangulation.
\end{enumerate}
\end{lemma}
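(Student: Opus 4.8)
The plan is to analyze the boundary edge $e$ of the layered solid torus $T$ by considering how the tetrahedra of $M$ abutting $e$ relate to $T.$ Since $e$ has $M$-degree four and $T$ is a genuine layered solid torus with at least two tetrahedra, $e$ is a boundary edge of $T,$ so its $T$-degree is either one (if $e$ is the univalent edge) or two (a non-univalent boundary edge has $T$-degree two). I would first dispose of the univalent case: if $e$ is univalent for $T,$ exactly one tetrahedron of $T$ meets $e,$ so the remaining three incidences at $e$ come from tetrahedra outside $T.$ If those three tetrahedra were fewer than three distinct simplices, one could layer on $e$ to enlarge $T,$ contradicting maximality in the relevant subcase; pushing this through should land us in conclusion (2), that $e$ lies in four distinct tetrahedra. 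The heart of the argument is therefore the non-univalent case, where $e$ has $T$-degree two.

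\textbf{The non-univalent case via Proposition~\ref{pro:degree four edges}.}
When $e$ has $T$-degree two, two of the four tetrahedra incident to $e$ lie in $T.$ I would invoke the model $(X_{4;k},e)$ for $\overline{e}=e$ furnished by Proposition~\ref{pro:degree four edges}, which enumerates every combinatorial neighbourhood of a degree-four edge in a minimal, $0$-efficient triangulation. The strategy is to match the two tetrahedra of $T$ incident to $e$ against the tetrahedra appearing in each model $X_{4;k},$ using the fact that inside $T$ the two faces adjacent to $e$ are glued in the standard layered (untwisted) fashion. Cases (4a) and (4d) are immediately incompatible with $e$ being a boundary edge of a layered solid torus built from at least two tetrahedra, so they can be ruled out by a direct inspection. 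The surviving models are $X_{4;2}^0=L(8,3),$ $X_{4;2}^1=S^3/Q_8,$ $X_{4;2}^2=\{5,2,3\},$ and the two three-tetrahedron models $X_{4;3}^0,X_{4;3}^1.$

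\textbf{Extracting the trichotomy.}
For each surviving model I would argue as follows. Whenever the model itself is (or extends to) a layered solid torus whose boundary edge $e$ can be further layered on, the tetrahedron realizing that layering either already belongs to $T$ (so $T$ properly contains the layered piece and is not maximal, giving conclusion (1)) or it shows $T$ can be enlarged, again violating maximality and forcing conclusion (1). The models $X_{4;2}^2$ and $X_{4;3}^0,X_{4;3}^1,$ which are themselves solid tori containing a copy of $\mathcal{S}_1$ or a layered structure, are exactly where this ``enlargeability'' dichotomy lives, and I expect them to yield conclusion (1). The closed-manifold models $X_{4;2}^0=L(8,3)$ and $X_{4;2}^1=S^3/Q_8$ correspond to the situation where the neighbourhood of $e$ closes up into all of $M$; here I would use Lemma~\ref{lem:meet in tet} together with the intersection lemmata of this section to conclude that $M$ is a lens space with minimal layered triangulation, i.e.\ conclusion (3). (The $S^3/Q_8$ model, not being a lens space, should be eliminated outright by checking that it cannot arise as the neighbourhood of a \emph{boundary} edge of a layered solid torus, since its three degree-four edges are all interior.)

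\textbf{Main obstacle.}
The principal difficulty I anticipate is the bookkeeping of face identifications in the non-univalent case: one must verify precisely which of the four tetrahedra around $e$ coincide, and then confirm that the standard layered gluing inside $T$ is compatible with exactly one placement inside each model $X_{4;k}.$ The delicate point is distinguishing ``$T$ is not maximal'' (conclusion (1)) from ``$M$ is a lens space with minimal layered triangulation'' (conclusion (3)): both arise when the neighbourhood of $e$ is a solid torus, and separating them requires tracking whether the extra layering tetrahedron forces a fold that closes $M$ up, or merely enlarges $T.$ I would handle this by appealing to Lemma~\ref{lem:meet in tet} and Lemma~\ref{lem:intersection of maximal layered solid tori} to control how a second maximal layered solid torus can intersect $T,$ reducing the fold-versus-enlarge ambiguity to the already-established intersection dichotomy.
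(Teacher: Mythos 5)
Your high-level strategy --- classify the neighbourhood of $e$ via Proposition~\ref{pro:degree four edges} and match $T$ against each model $X_{4;k}$ --- is exactly the paper's approach, but two of your concrete steps are wrong in ways that would derail the case analysis. First, a non-univalent boundary edge of a layered solid torus with at least two tetrahedra does \emph{not} have $T$-degree two: layering a tetrahedron on a boundary edge adds $2$ to the degree of each of the other two boundary edges (and $1$ to the edge layered on), so the three boundary edges of such a $T$ have $T$-degrees $1$, $\ge 3$, $\ge 3$ (compare $\lst_2$, whose boundary degrees are $1,3,5$). Hence the only possibilities for $e$ are $T$-degree $1$ (univalent) or $T$-degree $3$ or $4$, and in the degree-$3$ case the three preimages are distributed $1+2$ over the last two tetrahedra of $T$. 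Since the whole point of the matching is to count how many preimages of $e$ lie in tetrahedra of $T$ versus outside, starting from ``two of the four incidences lie in $T$'' sends every subsequent case to the wrong place.

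Second, your assignment of conclusions to models is largely inverted. The models $X_{4;2}^0\cong L(8,3)$ and $X_{4;2}^1\cong S^3/Q_8$ are not where conclusion (3) comes from: they are closed two-tetrahedron triangulations, so they cannot contain a layered solid torus with two tetrahedra as a subcomplex (its two boundary faces would have nowhere to go), and they are simply ruled out by the hypothesis on $T$ --- no appeal to Lemma~\ref{lem:meet in tet} is needed. Conclusion (3) in fact arises from $X_{4;2}^2$, which you expected to give conclusion (1): there the tetrahedron carrying three preimages of $e$ is a copy of $\lst_1$ with two self-identified faces, which cannot lie in $T$ (it would force $e$ to be $T$'s base edge or to have $T$-degree $\ge 5$), so $e$ is univalent for $T$ and $M$ decomposes as $T\cup\lst_1$, a lens space with (necessarily minimal) layered triangulation. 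Similarly $X_{4;3}^0$ is not an ``enlargeability'' case: it forces $T$ to meet a one-tetrahedron solid torus in a single face, contradicting Lemma~\ref{lem:layered solid tori don't meet in single face}; and $X_{4;3}^1$ genuinely bifurcates into conclusion (1) \emph{or} conclusion (3), the latter when the two free faces of the third tetrahedron are identified. Finally, your treatment of the univalent case (``if the three outside incidences come from fewer than three distinct tetrahedra one could layer on $e$'') is not justified as stated --- repeated outside incidences do not automatically produce an untwisted layering --- and is unnecessary: conclusion (2) falls out because $X_{4;4}$ is the only surviving model, and it consists of four distinct tetrahedra, forcing $e$ to be univalent for $T$.
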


\begin{proof}
We apply Proposition \ref{pro:degree four edges} to identify the complex $X_{4;k}$ on which the neighbourhood of $e$ is modelled. Since $T$ contains at least two tetrahedra, this rules out $X_{4;1}^0,$ $X_{4;1}^1,$ $X_{4;2}^0$ and $X_{4;2}^1.$

If the neighbourhood is modelled on $X_{4;2}^2,$ then $M$ is decomposed into $T$ and the one-tetrahedron solid torus, $\lst_1.$ Hence $M$ is a lens space with layered triangulation. By assumption, this must be a minimal layered triangulation.

If the neighbourhood is modelled on $X_{4;3}^0,$ then $T$ meets a one-tetrahedron solid torus, $\lst_1,$ in a face. This is not possible due to Lemma \ref{lem:layered solid tori don't meet in single face}.

If the neighbourhood is modelled on $X_{4;3}^1,$ then either one or two tetrahedra of $X_{4;3}^1$ are mapped to $T.$ If one is mapped to $T,$ then $T$ cannot be maximal. If two are mapped to $T,$ then either $T$ is not maximal, or, as in the proof of Proposition~\ref{pro:degree three edges}, one observes that since the triangulation is minimal with at least two tetrahedra and $M$ is irreducible, there is a face pairing between the remaining two free faces of the third tetrahedron which implies that $M$ is a lens space with layered triangulation.

The remaining possibility is $X_{4;4},$ which implies that $e$ is contained in four distinct tetrahedra. This forces $e$ to be the univalent edge for $T.$
\end{proof}

Recall the notion of colouring from Subsection \ref{subsec:colouring}.

\begin{lemma}
Assume that the triangulation contains a single vertex and that all edge loops are coloured using a homomorphism $\varphi \co \pi_1(M) \to \Z_2.$ Then all tetrahedra in a layered solid torus in $M$ are either of type one or type three, but not both.
\end{lemma}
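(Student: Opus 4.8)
The plan is to colour the edges of the layered solid torus $T$ by pulling $\varphi$ back along the inclusion $T \hookrightarrow M$, and to read off the type of each tetrahedron from the winding numbers of its edges about the core of $T$. Since $\tri$ has a single vertex, so does $T$, and every edge of $T$ is a loop based at that vertex; thus each edge $e$ represents a class $[e]_T \in \pi_1(T) \cong \Z$, namely its winding number about the core. Composing the inclusion-induced map with $\varphi$ gives a homomorphism $\rho \colon \pi_1(T) \cong \Z \to \Z_2$, and by definition an edge $e$ is $\varphi$--even precisely when $\rho([e]_T)=0$. As a homomorphism $\Z \to \Z_2$ is determined by the image of a generator, $\rho$ is either trivial or surjective, and this dichotomy will produce exactly the two alternatives in the statement.

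If $\rho$ is trivial, then every edge of $T$ is $\varphi$--even, so each tetrahedron has all six edges $\varphi$--even and is of type 3 in the sense of Subsection~\ref{subsec:colouring}. It remains to treat the surjective case; here an edge is $\varphi$--even if and only if its winding number is even, and I must show that every tetrahedron of $T$ is of type 1.

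The key step is to record the winding numbers using the layering construction of Subsection~\ref{sec:minimal layered extension}. With the standard labelling, the single tetrahedron of $\lst_1$ carries winding numbers $\{1,2,3\}$, with the two edges of winding number $2$ forming a pair of opposite edges and the remaining four (windings $1$ and $3$) being the other four edges; hence exactly one opposite pair is even and this tetrahedron is of type 1. For the tetrahedron layered on $e_k$, the hinge edge $e_k$ and the new edge $e_{k+2}$ form an opposite pair, and since $e_{k+2}=e_1+e_{k+1}$ and $e_{k+1}=e_1+e_k$ we have $[e_{k+2}]=[e_k]+2[e_1]$, so $e_k$ and $e_{k+2}$ have equal parity; the remaining four edges are two copies of $e_1$ and two copies of $e_{k+1}$, and each pair of copies is again a pair of opposite edges. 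Thus the three opposite pairs carry winding numbers of parities equal to that of $k$, odd, and that of $k+1$ respectively, and since $k$ and $k+1$ have opposite parities, exactly one opposite pair is $\varphi$--even while the other four edges are $\varphi$--odd. Equivalently, no face of the tetrahedron has all three edges $\varphi$--even, which excludes type 2; as some edge is $\varphi$--odd it is not type 3 either, so each tetrahedron is of type 1.

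Finally, the two cases are mutually exclusive and apply uniformly to all of $T$, since the colouring is governed by the single homomorphism $\rho$: if $\rho$ is trivial all tetrahedra are of type 3, and if $\rho$ is surjective all are of type 1, never both. The main obstacle is the winding-number bookkeeping of the previous paragraph, and in particular verifying that in every layered tetrahedron the $\varphi$--even edges form a single pair of opposite edges rather than a face; once this structural fact is established, the dichotomy for $\rho$ completes the proof.
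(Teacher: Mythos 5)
Your proof is correct and takes essentially the same route as the paper's (much terser) proof: the colouring of $T$ is determined by the image of its longitude under $\varphi$, and the description of layering shows that in each layered tetrahedron the hinge and new edge form an opposite pair of equal parity while the other two opposite pairs are doubled copies of the remaining boundary edges, so exactly one opposite pair is even in the surjective case. The only point to tidy is that your bookkeeping is written for the specific family $\lst_k$ rather than an arbitrary layered solid torus $\{p,q,p+q\}$; the identical computation applies in general (new edge $a+b$, hinge $\pm(a-b)$, hence equal parity, and coprimality of the boundary slopes rules out all three parities being even), so this is a matter of phrasing rather than a gap.
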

\begin{proof}
The colouring of the layered solid torus is uniquely determined by the image of the longitude under $\varphi;$ the result follows from the description of the layering procedure.
\end{proof}

\begin{definition}[(Types of layered solid tori)]
Assume that the triangulation contains a single vertex and that all edge loops are coloured using a homomorphism $\varphi \co \pi_1(M) \to \Z_2.$ A layered solid torus containing a tetrahedron of type one (respectively three) is accordingly termed of type one (respectively three). 
\end{definition}


\subsection{Maximal layered solid tori in atoroidal manifolds}

\begin{lemma}\label{lem:normal torus in 0-eff bounds solid torus}
Assume that $\tri$ is minimal and 0--efficient and that $M$ is atoroidal. Then every torus which is normal with respect to $\tri$ bounds a solid torus in $M$ on at least one side.
\end{lemma}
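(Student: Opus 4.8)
The plan is to exploit irreducibility together with atoroidality and $0$--efficiency to force a normal torus into a standard position. First I would observe that since $M$ is irreducible, any embedded torus in $M$ either bounds a solid torus on one side or is contained in a $3$--ball (and hence is compressible) or is incompressible. Since $M$ is atoroidal, no embedded torus can be incompressible and non-boundary-parallel in an essential way; so a normal torus $F$ is either compressible or (if $M$ had boundary-parallel tori) parallel into the boundary, but $M$ is closed, so $F$ must be compressible. Thus the first key step is: \emph{a normal torus $F$ in an atoroidal, irreducible $M$ must be compressible.}

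Next I would compress. Let $F$ be the normal torus and let $D$ be a compressing disc on one side. Compressing $F$ along $D$ yields a $2$--sphere $S.$ Because the triangulation is $0$--efficient, $M$ is irreducible, so $S$ bounds a $3$--ball $B$ in $M.$ The crucial step is then to reconstruct the region bounded by $F$ on the side containing $D$ by reversing the compression: undoing the compression glues the ball $B$ back to a neighbourhood of the compressing disc, and the union is a solid torus $V$ whose boundary is (isotopic to) $F.$ Here one must be careful about which side the disc $D$ lies on and whether $D$ is contained in the ball $B$ or in its complement; the cleanest route is to note that $F=\partial V$ where $V$ is the solid torus obtained as $B$ together with a $2$--handle $D\times I,$ and that $V$ lies on the side of $F$ from which the compression was performed. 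Hence $F$ bounds a solid torus on at least one side.

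I would handle the degenerate cases carefully: if the compressing disc is inessential (so $F$ is already compressible in a trivial way) or if compressing yields a sphere bounding a ball that lies on the ``wrong'' side, one should choose the compression so that the resulting solid torus is embedded and has $F$ as its boundary. The key background inputs are that $0$--efficiency forces irreducibility of $M$ and $M\neq\R P^3$ (so every embedded $2$--sphere bounds a ball), as recorded in the discussion of \cite{JR} following the definition of $0$--efficiency, together with the definition of atoroidal (no embedded incompressible torus that is not boundary-parallel, and here $M$ is closed so simply no embedded incompressible torus).

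The main obstacle I anticipate is the bookkeeping of \emph{sides}: after compressing $F$ along $D$ and capping off to get a ball $B,$ one must verify that gluing the $2$--handle back produces a solid torus lying on a definite side of $F$ and that this solid torus is genuinely embedded (the compressing disc must be chosen essential and the complementary region analysed so that no self-intersection is introduced). In other words, the real content is showing that at least \emph{one} of the two regions cobounded by $F$ is a solid torus, and ruling out the pathological possibility that compressing on each side only ever yields balls that reassemble to the wrong topological type; atoroidality together with irreducibility is exactly what prevents the other side from being, say, another nontrivial piece, so the argument reduces to the standard fact that a compressible torus in an irreducible $3$--manifold bounds a solid torus on the side of an essential compressing disc.
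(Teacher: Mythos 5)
Your argument runs into a genuine gap exactly at the point you flag as a ``bookkeeping'' issue: the case in which the $2$--sphere $S$ obtained by compressing the torus bounds a ball $B$ that \emph{contains} the torus. When $T\not\subset B$, attaching the $1$--handle dual to the compressing disc to $B$ does give a solid torus bounded by $T$, as you say. But when $T\subset B$, no amount of reassembling handles helps, and the ``standard fact'' you ultimately appeal to --- that a compressible torus in an irreducible $3$--manifold bounds a solid torus on the side of an essential compressing disc --- is false. The correct standard statement is that such a torus either bounds a solid torus or lies inside a ball, and the second alternative genuinely occurs without the first: take $M$ irreducible with $M\neq S^3$, embed the exterior $E$ of a nontrivial knot inside a ball $B_0\subset M$, and let $T=\partial E$. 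Then $T$ is compressible (a meridian disc lies on the non--$E$ side), one complementary region is the knot exterior $E$ (not a solid torus), and the other is a solid torus minus a ball glued to $M\setminus B_0$ along $\partial B_0$, which is reducible and hence not a solid torus. Atoroidality and irreducibility alone cannot rule this out, since this $T$ is compressible and $M$ can be atoroidal.

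This is precisely where the hypotheses you never actually use --- that $T$ is \emph{normal} and that $\tri$ is $0$--efficient --- must enter. The paper's proof handles the case $T\subset B$ by pushing $S$ off $T$ and observing that the normal torus $T$ is a barrier surface for $S$: since the triangulation is $0$--efficient, $S$ shrinks in the complement of $T$ either to a sphere inside a tetrahedron or to a vertex--linking sphere, hence bounds a ball on the side \emph{disjoint} from $T$ as well. Together with the ball $B\supset T$ on the other side this forces $M=S^3$, and Alexander's torus theorem then finishes the argument. Without some such use of normality and $0$--efficiency, the lemma as stated is not provable from irreducibility and atoroidality alone, so your proposal as written does not close.
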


\begin{proof}
Let $T$ be a normal torus in $M.$ Since $M$ is atoroidal, there is a compression disc for $T.$ Denote the 2--sphere resulting from the compression by $S.$ Since $M$ is irreducible, $S$ bounds a ball, $B.$ If $T$ is not contained in $B,$ then a solid torus with boundary $T$ is obtained by attaching a handle to $B$ in $M.$ 

Hence assume that $T$ is contained in $B.$ Then $S$ can be pushed off $T,$ and $T$ is a barrier surface for $S.$ Since $\tri$ is 0--efficient, $S$ either shrinks to a 2--sphere embedded in a tetrahedron or to a vertex linking 2--sphere. It follows that $M = S^3.$ Alexander's torus theorem now implies that $T$ bounds a solid torus on at least one side.
\end{proof}

\begin{lemma}\label{lem:two meet in edge gives solid torus}
Assume that $\tri$ is minimal and 0--efficient and that $M$ is atoroidal. Suppose two maximal layered solid tori meet in precisely one edge. Then the boundary of a small regular neighbourhood of their union bounds a solid torus in M.
\end{lemma}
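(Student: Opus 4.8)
The plan is to recognise the frontier torus $T'=\partial N$ and then run, with $N$ as a barrier, the same atoroidal-plus-$0$--efficient dichotomy that underlies Lemma~\ref{lem:normal torus in 0-eff bounds solid torus}.

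First I would pin down the topology of $T_1\cup T_2$ and of $N$. Because $\tri$ has a single vertex, the shared edge $e=T_1\cap T_2$ is a circle; moreover $e$ must be a boundary edge of each $T_i$, since otherwise all tetrahedra around $e$ would lie in one of the tori, the two tori would share a tetrahedron, and Lemma~\ref{lem:meet in tet} would force $M$ to be a lens space with layered triangulation --- but then the two maximal layered solid tori meet in their common boundary rather than in a single edge. Thus $N$ is obtained from two solid tori $N(T_1),N(T_2)$ by identifying a tubular neighbourhood of $e$; inclusion--exclusion gives $\chi(N)=0$, so by orientability every component of $T'$ is a torus. Writing $d_i\ge 1$ for the winding number of $e$ about the core $c_i$ of $T_i$, one reads off $\pi_1(N)=\langle c_1,c_2\mid c_1^{d_1}=c_2^{d_2}\rangle$, so $\operatorname{rank}H_1(N;\mathbb{Q})=1$; ``half lives, half dies'' then bounds the number of boundary tori by one, and hence $T'$ is a single torus.

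Next I would invoke the dichotomy for a compressible torus in an irreducible manifold. The torus $T'$ separates $M$ into $N$ and $W=\overline{M\setminus N}$, and since $M$ is closed and atoroidal, $T'$ is compressible in $M$. If $d_1=1$ or $d_2=1$ then $N$ is itself a solid torus and $T'$ bounds it, so assume $d_1,d_2\ge 2$. In this case $N$ is Seifert fibred over the disc with two exceptional fibres and therefore has incompressible boundary, so every compressing disc for $T'$ lies in $W$. Compressing $T'$ inside $W$ yields a $2$--sphere which, by irreducibility of $M$, bounds a ball; reattaching the compression handle then exhibits $W$ as a solid torus --- \emph{provided} $W$ is irreducible, which is exactly the conclusion that $T'$ bounds the solid torus $W$.

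The hard part, and the only place $0$--efficiency is really needed, is establishing that $W$ is irreducible; this is the analogue of the ``torus contained in a ball'' case of Lemma~\ref{lem:normal torus in 0-eff bounds solid torus}. Given a $2$--sphere $S\subset W$, irreducibility of $M$ produces a ball $B$ with $\partial B=S$, and the only difficulty is the possibility that $B\supset N$. In that event the complementary region $\overline{M\setminus B}\subset W$ is a compact $3$--manifold with sphere boundary containing \emph{no} vertex of $\tri$, since the unique vertex lies in $T_1\cup T_2\subset N\subset B$. Shrinking $S$ into this vertex-free region, using $N$ (equivalently $T_1\cup T_2$) as a barrier exactly as in Case~(5) of the proof of Proposition~\ref{pro:degree three edges}, $0$--efficiency forces $S$ to shrink to a sphere contained in a single tetrahedron rather than to a vertex-linking sphere; hence $\overline{M\setminus B}$ is a ball and $S$ bounds a ball in $W$. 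This yields irreducibility of $W$ and finishes the argument. I expect the verification that $e$ is a boundary edge and the irreducibility of the relevant complementary side to be the delicate points, while the surface identification and the compressible-torus dichotomy are routine.
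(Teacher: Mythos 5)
Your argument is correct, but it takes a genuinely different route from the paper's. The paper's proof is essentially two sentences: $\partial N$ is a torus and, as the frontier of a regular neighbourhood of a subcomplex, a barrier surface; hence it either normalises to a normal torus, in which case Lemma~\ref{lem:normal torus in 0-eff bounds solid torus} applies directly, or $\overline{M\setminus N}$ is a solid torus. You never normalise the torus itself. Instead you verify explicitly that $\partial N$ is a single torus (which the paper only asserts; note that $\chi(\partial N)=0$ alone does not force each component to be a torus --- it is the combination with the half-lives-half-dies bound, or the explicit decomposition of $\partial N$ into four annuli, that closes this step), you dispose separately of the case in which $N$ is already a solid torus, and otherwise you compress $\partial N$ into $W=\overline{M\setminus N}$ and prove $W$ irreducible by running the barrier argument on the compressing sphere rather than on the torus. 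Your version is more self-contained and identifies which side is the solid torus when $N$ is not one; the paper's is shorter precisely because it reuses Lemma~\ref{lem:normal torus in 0-eff bounds solid torus} as a black box. One tacit convention you share with the paper: the assumption $d_i\ge 1$ excludes the degenerate $\{0,1,1\}$ layered solid torus, in which a boundary edge bounds a disc and hence $M=S^3$ by Proposition~5.3 of \cite{JR}; this case is harmless but worth a word.
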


\begin{proof}
Denote the two maximal layered solid tori by $T_1, T_2,$ and the common edge by $e.$ Then the boundary of a small regular neighborhood $N$ of $T_1\cup T_2$ is a (topological) torus and a barrier surface. Hence, either $\partial N$ is isotopic to a normal surface or $\overline{M\setminus N}$ is a solid torus. In the second case, we are done, and in the first case Lemma \ref{lem:normal torus in 0-eff bounds solid torus} gives the conclusion.
\end{proof}

\begin{lemma}\label{lem:two meet in edge}
Assume that $\tri$ is minimal and 0--efficient and that $M$ is atoroidal. Suppose the edges are coloured by a homomorphism $\varphi \co \pi_1(M) \to \Z_2,$ and that two pairwise distinct maximal layered solid tori of type one meet in an $\varphi$--even edge. Then either $M$ is a lens space with layered triangulation, or $M$ admits a Seifert fibration with at least two and at most three exceptional fibres.
\end{lemma}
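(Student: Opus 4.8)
The plan is to assume that $M$ is not a lens space with layered triangulation and to build the Seifert fibration directly. Denote the two maximal layered solid tori by $T_1$ and $T_2$ and the common $\varphi$--even edge by $e.$ Since $M$ is not a lens space with layered triangulation, Lemma~\ref{lem:intersection of maximal layered solid tori} shows that $T_1\cap T_2$ consists of at most a single edge; hence $T_1\cap T_2=e.$ As no tetrahedron is shared, $e$ cannot be an interior edge of either $T_i$ (an interior edge has all its incident tetrahedra in the solid torus), so $e$ is a boundary edge of both $T_1$ and $T_2.$ Lemma~\ref{lem:two meet in edge gives solid torus} then provides a solid torus $W$ with $M=N\cup_{\partial N} W$ and $\partial N=\partial W,$ where $N$ is a small regular neighbourhood of $T_1\cup T_2.$ The whole argument reduces to showing that $N$ carries a Seifert fibration over the disc with exactly two exceptional fibres and that this fibration extends across $W.$

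First I would describe $N$ topologically. As $T_1$ and $T_2$ meet only along the circle $e,$ the regular neighbourhood $N$ is obtained from the solid tori $T_1,T_2$ by gluing them along an annulus $A$ that is a neighbourhood of $e$ with core $e.$ To Seifert fibre $N,$ fibre each $T_i$ by circles parallel to $e$ on $\partial T_i$; these fibrations agree on $A$ (whose core is the fibre $e$), so they assemble to a Seifert fibration of $N$ over the disc, the cores of $T_1$ and $T_2$ being the only possibly exceptional fibres. Their orders are the longitudinal winding numbers $\alpha_1,\alpha_2$ of $e$ in $T_1,T_2,$ that is, the absolute values of the classes $[e]\in H_1(T_i)\cong\Z,$ equivalently the geometric intersection numbers of $e$ with the meridian of $T_i.$ Here the colouring enters: since $T_i$ is of type one, its core (longitude) is $\varphi$--odd, and $[e]=\alpha_i\,[\text{core}]$ gives $\varphi[e]=\alpha_i\bmod 2.$ As $e$ is $\varphi$--even, each $\alpha_i$ is even. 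Because $e$ is a boundary edge of a layered solid torus it is non--meridional, so $\alpha_i\ge 1$ (the boundary slopes of $\lst_k$ are $\{1,k+1,k+2\},$ none equal to the meridian); combined with evenness this forces $\alpha_i\ge 2.$ Thus $N$ is a Seifert fibred space over the disc with exactly two exceptional fibres, of orders $\alpha_1,\alpha_2\ge 2.$

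It remains to fill $W.$ The torus $\partial N$ inherits a regular fibre $h.$ If the meridian $\mu_W$ of $W$ equals $h,$ then $M$ is obtained from a Seifert fibred space over the disc with two exceptional fibres by filling the fibre slope, hence is a connected sum $L(\alpha_1,\beta_1)\#L(\alpha_2,\beta_2)$ of two non--trivial lens spaces; this contradicts irreducibility of $M.$ Therefore $\mu_W\neq h,$ and the Seifert fibration of $N$ extends across $W$ with the core of $W$ as a (possibly regular) fibre. Since the base of $N$ is a disc and $W$ contributes a second disc, the base of $M$ is $S^2,$ and the exceptional fibres are the two of orders $\alpha_1,\alpha_2$ together with at most one further exceptional fibre coming from $W.$ Hence $M$ admits a Seifert fibration over $S^2$ with at least two and at most three exceptional fibres, as claimed.

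The step I expect to be the main obstacle is the precise identification of $N$ as a Seifert fibred space over the disc with two exceptional fibres of the stated orders: one must verify that the regular neighbourhood of $T_1\cup T_2$ is genuinely the annular union $T_1\cup_A T_2,$ that the two boundary fibrations match along $A,$ and that the fibre orders are exactly the winding numbers---this last point being where the even--winding computation coming from the type--one colouring and the $\varphi$--even hypothesis is essential. A secondary point requiring care is the reducibility statement in the fibre--filling case, for which one invokes the standard fact that capping a Seifert fibred space over the disc with at least two exceptional fibres along the fibre slope yields a connected sum of non--trivial lens spaces.
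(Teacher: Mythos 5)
Your proposal is correct and follows essentially the same route as the paper: reduce to $T_1\cap T_2=e$ via Lemma~\ref{lem:intersection of maximal layered solid tori}, take the regular neighbourhood $N$, invoke Lemma~\ref{lem:two meet in edge gives solid torus}, use the type-one/$\varphi$--even hypothesis to see that $e$ has even (hence $\ge 2$) winding number in each $T_i$ so that $N$ is not a solid torus, and conclude that $\overline{M\setminus N}$ is a solid torus whose filling yields two or three exceptional fibres. You merely spell out more of the details the paper leaves implicit (the annular decomposition of $N$, the fibre orders, and the exclusion via irreducibility of the case where the meridian of the complementary solid torus is the fibre slope).
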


\begin{proof}
Denote the two maximal layered solid tori by $T_1, T_2,$ and the common $\varphi$--even edge by $e.$ If $T_1 \cap T_2$ properly contains $e,$ then $M$ is a lens space with layered triangulation according to Lemma \ref{lem:intersection of maximal layered solid tori}.

Hence assume $T_1 \cap T_2 = \{ e\}.$ Let $N$ be a small regular neighbourhood of $T_1 \cup T_2.$ Then $\partial N$ is an embedded torus in $M,$ and either $N$ or $\overline{M\setminus N}$ is a solid torus according to Lemma \ref{lem:two meet in edge gives solid torus}. Since $T_1$ and $T_2$ are of type one, $e$ is not a longitude of either of them. It follows that $N$ cannot be a solid torus. Whence $\overline{M\setminus N}$ is a solid torus. If $e$ is not homotopic to a longitude of $\overline{M\setminus N},$ then we have a Seifert fibration of $M$ with three exceptional fibres; otherwise we have a Seifert fibration with two exceptional fibres.
\end{proof}

\begin{lemma}\label{lem:three meet in edge}
Assume that $\tri$ is minimal and 0--efficient and that $M$ is atoroidal. Suppose the edges are coloured by a homomorphism $\varphi \co \pi_1(M) \to \Z_2,$ and that three pairwise distinct maximal layered solid tori meet in an $\varphi$--even edge. Then at least one of them is of type three unless $M$ admits a Seifert fibration with three exceptional fibres.
\end{lemma}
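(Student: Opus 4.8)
The plan is to split on whether some $T_i$ is of type three: if at least one is, the first alternative of the conclusion holds and there is nothing to prove, so I would assume that all three of $T_1, T_2, T_3$ are of type one and then produce a Seifert fibration of $M$ with exactly three exceptional fibres. First I would dispose of the lens space case. If any two of the tori met in more than the edge $e,$ then Lemma \ref{lem:intersection of maximal layered solid tori} would force $M$ to be a lens space with layered triangulation; but such a triangulation contains precisely two maximal layered solid tori, contradicting the existence of the three distinct tori $T_1, T_2, T_3.$ Hence every pairwise intersection is exactly $\{e\},$ so in particular no two of the tori share a tetrahedron, and $M$ is not a lens space with layered triangulation.

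Next I would run the construction from the proof of Lemma \ref{lem:two meet in edge} on the pair $T_1, T_2.$ Writing $N=N(T_1\cup T_2)$ for a small regular neighbourhood, Lemma \ref{lem:two meet in edge gives solid torus} together with atoroidality gives that $\partial N$ is a torus bounding a solid torus $W=\overline{M\setminus N};$ since $T_1, T_2$ are of type one, $e$ is not a longitude of either and $N$ itself is not a solid torus. This equips $N$ with a Seifert fibration over the disc whose regular fibre on $\partial N$ is $e$ and whose two exceptional fibres are the cores of $T_1$ and $T_2.$ The fibration of $M$ furnished by Lemma \ref{lem:two meet in edge} is obtained by filling $W$ so that the fibre extends, and it acquires a third exceptional fibre, namely the core of $W,$ precisely when $e$ is not a longitude of $W.$ Thus the whole problem reduces to showing that $e$ is not a longitude of $W$ and that the fibration genuinely extends over $W.$

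This is where $T_3$ enters, and it is the step I expect to be the main obstacle. Since $T_3\cap T_1=T_3\cap T_2=\{e\},$ the tetrahedra of $T_3$ avoid $T_1\cup T_2,$ so after isotoping its boundary edge onto $\partial W$ one may regard $T_3$ as a layered solid torus lying in $W$ with $e\subset\partial W.$ Because $T_3$ is of type one, $e$ is not a longitude of $T_3,$ so $[e]$ is not a generator of $\pi_1(T_3)\cong\Z;$ writing $[e]=n_3 g$ with $\abs{n_3}\neq 1,$ its image under the inclusion-induced map $\pi_1(T_3)\to\pi_1(W)\cong\Z$ is again a multiple of $n_3$ and so cannot be a generator. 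Hence $e$ is not a longitude of $W,$ producing the third exceptional fibre. It then remains only to rule out the degenerate possibility that $[e]$ is trivial in $\pi_1(W)$ (that is, $e$ a meridian of $W$), for in that case the fibration would not extend: filling the Seifert piece $N$ over the disc along its fibre slope yields a connected sum of two lens spaces, contradicting the irreducibility of $M.$ Thus $e$ is neither a meridian nor a longitude of $W,$ the fibration of $N$ extends across $W,$ and $M$ carries a Seifert fibration whose three exceptional fibres are the cores of $T_1, T_2$ and $W.$ The one genuinely delicate point is making the containment $T_3\subseteq W$ precise: since $e$ lies in the interior of the small neighbourhood $N,$ the identification of $[e]$ inside $\pi_1(W)$ must be argued at the level of fundamental groups rather than by naive set-theoretic containment.
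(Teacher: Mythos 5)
Your strategy is sound and reaches the correct conclusion, but at the decisive step you take a genuinely different route from the paper, and that is exactly where your one acknowledged gap sits. The paper likewise reduces to $T_1,T_2$ of type one, forms $N=N(T_1\cup T_2)$ with $W=\overline{M\setminus N}$ a solid torus, and must rule out $e$ being a longitude of $W$; but instead of computing the class of $e$ inside $\pi_1(W)$ it argues contrapositively with the globally defined homomorphism $\varphi$: if $e$ were a longitude of $W,$ the generator of $\pi_1(W)$ would be freely homotopic in $M$ to the $\varphi$--even loop $e,$ so $\varphi$ restricted to $W$ is trivial; since the core of $T_3$ is isotopic off $e$ and hence into $W,$ its $\varphi$--value is $0,$ forcing $T_3$ to be of type three --- the other alternative of the conclusion. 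Because $\varphi$ is defined on all of $\pi_1(M)$ and is constant on free homotopy classes, this sidesteps entirely the point you flag as delicate, namely identifying the class on $\partial W$ of the regular fibre of the fibration of $N$ with the image of $[e]\in\pi_1(T_3)$ under inclusion into $\pi_1(W).$ Your divisibility argument ($[e]=n_3g$ with $n_3$ even since $e$ is $\varphi$--even and the longitude of $T_3$ is $\varphi$--odd, hence the image in $\pi_1(W)\cong\Z$ is not a generator) does work, but only after you justify that the fibre on $\partial N$ and the push-off of $e$ into $\partial T_3\cap W$ represent the same class in $\pi_1(W)$; this can be done (all push-offs of $e$ to $\partial N$ are disjoint essential simple closed curves on the torus $\partial N,$ hence parallel), but as written it is the missing step, and you should either supply it or switch to the $\varphi$--based argument, which needs no such identification. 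Two smaller points in your favour: you explicitly dispose of the meridian case via irreducibility, which the paper's appeal to Lemma \ref{lem:two meet in edge} leaves implicit, and noting that $n_3$ is even (not merely different from $\pm1$) is what guarantees the third fibre genuinely has order at least two.
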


\begin{proof}
Denote the three maximal layered solid tori by $T_1, T_2, T_3,$ and the common $\varphi$--even edge by $e.$ If $T_i \cap T_j$ strictly contains $e$ for $i\neq j,$ then $M$ is a lens space with layered triangulation according to Lemma \ref{lem:intersection of maximal layered solid tori}. But then there are no three pairwise distinct maximal layered solid tori. Hence $M$ is not a lens space with layered triangulation.

Assume that $T_1$ and $T_2$ are of type one. As in the proof of Lemma~\ref{lem:two meet in edge}, let $N$ be a small regular neighbourhood of $T_1 \cup T_2.$ Then $\overline{M\setminus N}$ is a solid torus, but $N$ is not a solid torus as $e$ is not a longitude of $T_1$ or $T_2.$ Assume that $M$ does not admit a Seifert fibration with three exceptional fibres. Then $e$ is homotopic to a longitude of $\overline{M\setminus N}$ and it follows that $\varphi$ restricted to $\overline{M\setminus N}$ is trivial. If $T_3$ has $e$ as its longitude, then it is of type three. Otherwise, the longitude of $T_3$ is homotopic into $\overline{M\setminus N}.$ But this implies that $\varphi$ restricted to $T_3$ is trivial which again implies that $T_3$ is of type three.
\end{proof}

\begin{lemma}\label{lem:type one meet in even}
Assume that $\tri$ is minimal and 0--efficient and that $M$ is a lens space. Suppose the edges are coloured by a homomorphism $\varphi \co \pi_1(M) \to \Z_2.$

If two distinct type one maximal layered solid tori meet in an $\varphi$--even edge, then these are the only maximal layered solid tori of type one in the triangulation.
\end{lemma}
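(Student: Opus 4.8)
The plan is to suppose there is a third type one maximal layered solid torus $T_3$, distinct from the given pair $T_1,T_2$, and to derive a contradiction by locating $T_3$ inside a solid torus on which $\varphi$ is forced to be trivial. Since $M$ is a lens space it is atoroidal, so Lemma~\ref{lem:two meet in edge} applies to $T_1,T_2$ meeting in the $\varphi$--even edge $e$. One of its two alternatives is that $M$ is a lens space with layered triangulation; but then $M$ contains precisely two maximal layered solid tori, so the distinct tori $T_1,T_2$ are already all of them and the statement holds. In the remaining alternative $M$ admits a Seifert fibration over $S^2$ with two or three exceptional fibres. A lens space has finite cyclic fundamental group, whereas a Seifert fibred space over $S^2$ with three genuine exceptional fibres has fundamental group surjecting onto the orbifold fundamental group of a three--cone--point sphere, which is never finite cyclic; hence the three--fibre case cannot occur and $M$ fibres with \emph{exactly} two exceptional fibres.

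I would then read off the relevant data from the proof of Lemma~\ref{lem:two meet in edge}: in the two--fibre case the complement $W=\overline{M\setminus N}$ of a small regular neighbourhood $N$ of $T_1\cup T_2$ is a solid torus having $e$ as a longitude. Thus the core of $W$ is homotopic to $e$, so $\varphi$ vanishes on $\pi_1(W)$ and every loop carried by $W$ is $\varphi$--even. The key step is now to show that $T_3$ is contained in $W$. Because $T_3$ is a maximal layered solid torus distinct from both $T_1$ and $T_2$, Lemma~\ref{lem:intersection of maximal layered solid tori} bounds each of the intersections $T_3\cap T_1$ and $T_3\cap T_2$ by a single edge; in particular $T_3$ shares no tetrahedron with $T_1$ or $T_2$. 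Consequently every tetrahedron of $T_3$ lies in the complement of $T_1\cup T_2$, which is carried by $W$, and therefore the subcomplex $T_3$ sits inside $W$.

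To finish I would invoke the dictionary between the type of a layered solid torus and the image of its longitude under $\varphi$: a layered solid torus is of type one exactly when its core (longitude) is $\varphi$--odd. Since $T_3\subset W$ and $\varphi$ is trivial on $\pi_1(W)$, the core of $T_3$ is $\varphi$--even, so $T_3$ is of type three---contradicting the assumption that $T_3$ is of type one. Hence no such $T_3$ exists, and $T_1,T_2$ are the only type one maximal layered solid tori. I expect the main obstacle to be the two structural inputs rather than the final colouring computation: first, correctly extracting from the proof of Lemma~\ref{lem:two meet in edge} that the two--fibre case produces the solid torus $W$ with $e$ as a longitude, and second, the topological fact that a lens space admits no Seifert fibration over $S^2$ with three exceptional fibres, which is what eliminates the other alternative.
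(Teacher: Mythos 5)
Your proof is correct and follows essentially the same route as the paper: both pass to the regular neighbourhood $N$ of $T_1\cup T_2$, identify $\overline{M\setminus N}$ as a solid torus with $e$ as a longitude (the paper simply asserts this ``since $M$ is a lens space,'' which is exactly your exclusion of the three--exceptional--fibre case), deduce that $\varphi$ is trivial on $\overline{M\setminus N}$, and obtain a contradiction from the $\varphi$--odd longitude of a putative third type one torus. The only cosmetic point is that $T_3$ need not literally sit inside $\overline{M\setminus N}$ --- it may share boundary edges with $T_1\cup T_2$ --- but since it shares no tetrahedron with them its core is carried by $\overline{M\setminus N}$, which is all the colouring argument requires.
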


\begin{proof}
Denote the maximal layered solid tori by $T_1, T_2,$ and the common $\varphi$--even edge by $e.$ If $T_1 \cap T_2$ strictly contains $e,$ then $M$ is a lens space with layered triangulation, and hence $T_1$ and $T_2$ are the only maximal layered solid tori of type one in the triangulation. Hence assume $T_1 \cap T_2 = \{ e\}.$

Let $N$ be a small regular neighborhood of $T_1\cup T_2.$ As in the argument above, the closure of $\overline{M\setminus N}$ is a solid torus but $N$ is not a solid torus. However, since $M$ is assumed to be a lens space, $e$ must be a longitude of $\overline{M\setminus N}.$ Since $e$ is $\varphi$--even, $\varphi$ restricted to $e$ is trivial and thus $\varphi$ restricted to $\overline{M\setminus N}$ is trivial. Now assume that $T_3$ is a maximal layered solid torus of type one which is distinct from $T_1, T_2.$ Since $T_3$ is of type one, its longitude is $\varphi$--odd. But it is clearly homotopic into $\overline{M\setminus N},$ contradicting the fact that $\varphi$ restricted to $\overline{M\setminus N}$ is trivial. Whence $T_1$ and $T_2$ are the only maximal layered solid tori of type one in the triangulation.
\end{proof}


\section{Main result, consequences and examples}
\label{sec:minimal of L(2n,1)}


\subsection{The main result and some consequences}

\bf{Theorem \ref{thm:really main}}\rm\ \emph{
A lens space with even fundamental group satisfies Conjecture \ref{conj:layered lens is minimal} if it has a minimal layered triangulation such that there are no more $\varphi$--odd edges than there are $\varphi$--even edges. 
}

\begin{proof}
Let $M$ be a lens space with even fundamental group and $\varphi\co \pi_1(M)\to \Z_2$ be the unique non-trivial homomorphism. Suppose that the minimal layered triangulation has no more $\varphi$--odd edges than $\varphi$--even edges.

If $M= L(4,1),$ then $\lay_1$ is a one-tetrahedron triangulation satisfying the hypothesis and we only need to prove the uniqueness statement. This follows by enumerating all single tetrahedron triangulations of closed, orientable 3--manifolds.

If $M= \R P^3,$ then the minimal layered triangulation has two tetrahedra, one $\varphi$--odd edge and one $\varphi$--even edge. It is well-known that this is a minimal triangulation (as follows, for instance, from the above method of enumeration), but that there is another minimal triangulation with two vertices coming from the standard lens space representation. 

Hence we may assume that $M\neq \R P^3$ or $L(4,1).$ In this case, it is shown in Subsection~\ref{subsec:comb bds} that we have the following inequality for $\varphi$--even edges:
\begin{equation}\label{eqn:degree three proof}
\even_3 \ge 2 + \sum_{j=5}^{\infty} (j-4) \even_j.
\end{equation}
By way of contradiction, suppose that the given minimal triangulation is not the minimal layered triangulation. Each edge of degree three is $\varphi$--even, so (\ref{eqn:degree three proof}) implies that there are at least two edges of degree three. Since the triangulation contains at least three tetrahedra, each edge of degree three, $e,$ is the base edge of a layered solid torus subcomplex isomorphic to $\lst_2.$ This subcomplex is contained in a unique maximal layered solid torus, $\T(e).$ Conversely, if a maximal layered solid torus, $T,$ contains an edge, $e,$ of degree three, then $e$ is unique and we write $e=\e(T).$

Since we assume that the given minimal triangulation of $M$ is not the minimal layered triangulation, it follows from Lemma~\ref{lem:intersection of maximal layered solid tori} that any two distinct maximal layered solid tori share at most an edge. We seek a contradiction guided by inequality (\ref{eqn:degree three proof}).

The proof, a basic counting argument, is organised as follows. The set of all edges of degree three, $Y,$ is divided into a number of pairwise disjoint subsets. These subsets are bijectively pared with a set of pairwise distinct $\varphi$--even edges. If a subset of $Y$ contains $h$ pairwise distinct edges of degree three and the associated $\varphi$--even edge has degree $d,$ then we say that there is an associated \emph{deficit} of $h-d+4$ if $h \le d-4,$ and a \emph{gain} of $h-d+4$ if $h > d-4.$ Then (\ref{eqn:degree three proof}) implies that the total gain is at least two.

Let $e \in Y$ such that $\T(e)$ is of type three. Then $\lst_2 \cong T_0 \subseteq \T(e).$ Denote $e_0$ the longitude of $T_0.$ This is a $\varphi$--even boundary edge with $T_0$--degree $5$ and $M$--degree $5+m$ for some $m\ge1.$ The total number of maximal layered solid tori in $M$ meeting in $e_0$ is bounded above by $\frac{m+1}{2},$ since no two meet in a face. Hence the maximal layered solid tori containing a degree three edge and meeting in the $\varphi$--even edge $e_0$ contribute at most $\frac{m+1}{2}$ to the left hand side of (\ref{eqn:degree three proof}). The contribution of $e_0$ to the right hand side is $d(e_0)-4=1+m.$ One therefore obtains a \emph{deficit} since $\frac{m+1}{2}\le m+1.$ To $e_0$ associate the set, $Y_0,$ of all degree three edges $e'$ such that $\T(e')$ contains $e_0.$

We now proceed inductively. Let $e$ be an edge of degree three such that $\T(e)$ is of type three and $e$ is not contained in the collection of subsets $Y_0, ..., Y_{i-1}$ of $Y.$ Then $\T(e)$ contains a subcomplex isomorphic to $\lst_{2},$ whose longitude, $e_i,$ cannot be any of the edges $e_0,...,e_{i-1}.$ Consider the set $Y_i$ of all degree three edges $e'$ such that $\T(e')$ contains $e_i$ and $e'$ is not contained in any of $Y_0, ..., Y_{i-1}.$ Then the above calculation shows that there is a deficit associated to $e_i$ and $Y_i.$

It follows that there must also be a maximal layered solid torus of type one, $T,$ which contains an edge of degree three not in $\cup Y_i.$ Let $e$ be the unique $\varphi$--even boundary edge of $T.$ We observe:
\begin{enumerate}
\item Suppose $T_1$ is a maximal layered solid torus of type one which meets $T$ in $e.$ Lemma \ref{lem:type one meet in even} implies that $T, T_1$ are the only maximal layered solid tori of type one. If $d(e)\ge 5,$ then $e$ can be associated with $\{\e(T), \e(T_1)\}$ and either gives a deficit or a gain of $+1.$ In either case, (\ref{eqn:degree three proof}) cannot be satisfied since $Y = \cup Y_i \bigcup \{\e(T), \e(T_1)\}.$ Hence $d(e)= 4.$
\item If $T$ contains a $\varphi$--even interior edge, $e_T,$ of degree at least five, then $e_T$ can be associated with $\{\e(T)\}$ and gives a deficit.
\item If each $\varphi$--even interior edge of $T$ has degree less than five, but $d(e)\ge5,$ then it can be associated with $\{\e(T)\}$ and gives a deficit.
\end{enumerate}

Let $X$ be the set of all degree three edges such that there is an associated maximal layered solid torus of type one which satisfies (2) or (3) above. To the collection $\cup_j Y_j \bigcup X$ we have an associated deficit, and that its complement, $Z = Y \setminus (\cup_j Y_j \bigcup X),$ contains precisely the degree three edges with the property that the associated maximal layered solid torus has all $\varphi$--even interior edges of degree at most four and has a $\varphi$--even boundary edge of degree four. If $Z$ is empty, then (\ref{eqn:degree three proof}) is not satisfied.

\begin{figure}[t]
\psfrag{a}{{\small $-1$}}
\psfrag{b}{{\small $0$}}
\psfrag{c}{{\small $+1$}}
\begin{center}
      \includegraphics[height=4cm]{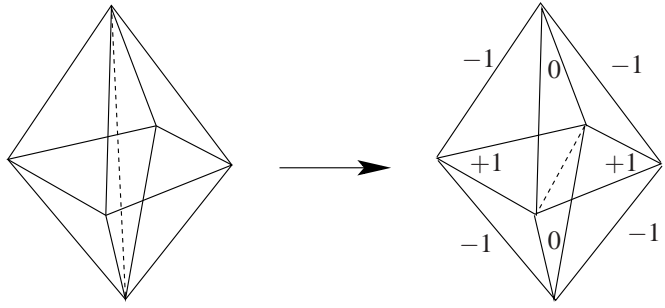}
\end{center}
    \caption{Edge flip: Labels indicate the changes in edge degree}
     \label{fig:edge flip}
\end{figure}
Let $T$ be the maximal layered solid torus associated to an element of $Z.$ It follows from Lemma~\ref{lem:layered lens char 2} that its $\varphi$--even boundary edge of degree four, $e,$ is its univalent edge, and that it is contained in four distinct tetrahedra in the triangulation. The argument proceeds by replacing the four tetrahedra around $e$ by a different constellation of four tetrahedra using an appropriate \emph{edge flip}, see Figure \ref{fig:edge flip}. The resulting triangulation $\tri'$ is also minimal and not a layered triangulation since it contains an edge of degree four contained in four distinct tetrahedra; there are associated sets $Y',$ $Y'_i,$ $X'$ and $Z'.$ We will show that either $Z'$ is a proper subset of $Z,$ or that $\tri'$ contains fewer tetrahedra of type three than $\tri.$ It then follows inductively that there is a minimal triangulation $\tri''$ which is not the minimal layered triangulation and with $Z'' = \emptyset;$ thus giving a contradiction.

It remains to describe the re-triangulation process. There are three different cases to consider; they are listed below using the types of tetrahedra ordered cyclically around the $e,$ starting with $T.$ The complex formed by the four tetrahedra around $e$ is also termed an octahedron (even though it may not be embedded), and the set of all four edges ``linking" $e$ its equator.

(1,1,1,1): Denote the maximal layered solid tori containing $e$ by $T$ and $T_1.$ (We may have $T_1=T.$) Doing any edge flip replaces the octahedron with four tetrahedra of type two meeting in an $\varphi$--even edge of degree four. The $\varphi$--even edges along the equator have their degrees increased by one. Since no maximal layered solid torus contains a tetrahedron of type two, it follows that each maximal layered solid torus with respect to $\tri$ is an maximal layered solid torus with respect to $\tri'$ except for $T$ and $T_1;$ whence $Y \setminus \{ \e(T), \e(T_1) \} \subset Y'.$ If $T$ is isomorphic to $\lst_2,$ then after the flip, the degree three edge becomes a degree four edge; so $\e(T) \notin Y',$ which implies $\e(T) \notin Z'.$ Otherwise $\e(T) \in Y',$ but $\T(\e(T))$ with respect to $\tri'$ has an $\varphi$--even boundary edge of degree five. Whence $\e(T) \in X',$ and so $\e(T) \notin Z'.$ Similarly for $T_1.$ Since no $\varphi$--even boundary edge has its degree decreased by the flip, and no other maximal layered solid tori are affected, we have $Y'_j = Y_j$ for each $j$ and $X \subseteq X'.$ This proves that $Z'$ is a proper subset of $Z$ in this case.

(1,2,2,1) or (1,1,2,2): We have that $T$ is the unique maximal layered solid torus containing $e.$ Do the unique edge flip such that the two $\varphi$--even edges not contained on the equator stay of the same degree. This replaces the octahedron by four tetrahedra of type one meeting in an odd edge of degree four. The main line of the argument is as above, showing that $\e(T) \notin Z'.$ If every other maximal layered solid torus with respect to $\tri$ is a maximal layered solid torus with respect to $\tri',$ then we are done. Hence assume that some maximal layered solid torus with respect to $\tri',$ $T'_0,$ is obtained from a maximal layered solid torus with respect to $\tri,$ $T_0,$ by layering onto it one of the (new) four tetrahedra of type one. Note that two of these tetrahedra cannot be contained in $T'_0$ for otherwise $T'_0$ meets another layered solid torus in a single face. Hence there is at most one such tetrahedron, and at most one such maximal layered solid torus which has been extended. Now also note that the $\varphi$--even boundary edge of $T'_0$ has degree one more than the $\varphi$--even boundary edge of $T_0.$ Thus, if $\e(T_0) \in X,$ then $\e(T'_0) \in X',$ and if $\e(T_0) \in Z,$ then also $\e(T'_0) \in X'.$ This proves that we again have $Y'_j = Y_j$ for each $j,$ $X \subseteq X',$ and $Z'$ is a proper subset of $Z.$

(1,2,3,2): In this case, doing any edge flip replaces this with a (1,1,2,2)--octahedron around an odd edge of degree four, thus reducing the number of tetrahedra of type three.
\end{proof}

\begin{corollary}
Let $L(2n,q)$ be a lens space with even fundamental group, and minimal layered triangulation $\lay_{2n,q}.$ Suppose that the edges have been coloured using the non-trivial homomorphism $\varphi \co \pi_1 (L(2n,q)) \to \Z_2.$ Then $\even(\lay_{2n,q}) \le \odd(\lay_{2n,q}).$
\end{corollary}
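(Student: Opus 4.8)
The plan is to turn the one-sided estimates of Subsection~\ref{subsec:comb bds}, which drove the proof of Theorem~\ref{thm:really main}, into an \emph{exact} identity for the minimal layered triangulation itself, and then to read off $\even\le\odd$ from a bound on the number of edges of degree three. First I dispose of the exceptional cases: for $\R P^3$ the minimal layered triangulation has one $\varphi$--even and one $\varphi$--odd edge, and for $L(4,1)$ the triangulation $\lay_1$ consists of a single tetrahedron whose two edges are checked directly; in both cases $\even(\lay)\le\odd(\lay)$. I may therefore assume $M=L(2n,q)$ with $M\neq\R P^3,L(4,1)$, so that $\lay=\lay_{2n,q}$ has at least three tetrahedra and, by Proposition~\ref{pro:degree one and two edges}, no edges of degree one or two.

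The engine is the observation that $\lay$ contains only tetrahedra of type one. Indeed, Theorem~8.2 of \cite{JR:LT} gives $\chi(S_\varphi(\lay))=1-\even(\lay)$, and substituting this into the combinatorial identity~(\ref{eq:e}) yields $2C+B=0$, whence $B=C=0$. Consequently the single inequality in~(\ref{inequ for one vert tri gen}), whose slack is exactly $B$, becomes an equality; and the relations $T(\lay)=\even(\lay)+\odd(\lay)-1$ together with $\chi(S_\varphi(\lay))=1-\even(\lay)$ turn~(\ref{inequ:odd and even}) into an equality as well. Tracking these equalities through the derivation of~(\ref{inequ for min tri general}) produces the exact relation
\[
2\bigl(\odd(\lay)-\even(\lay)\bigr)=2-\even_3+\sum_{j=5}^{\infty}(j-4)\,\even_j,
\]
which holds for $\lay$ \emph{without} any hypothesis relating $\even$ and $\odd$ (unlike~(\ref{inequ for min tri})). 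Since each term $(j-4)\even_j$ with $j\ge5$ is non-negative, it now suffices to prove the single bound $\even_3\le 2$; the desired inequality $\even(\lay)\le\odd(\lay)$ follows immediately.

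To bound $\even_3$ I exploit the structure of the two maximal layered solid tori of $\lay$. Being a layered triangulation of a lens space, $\lay$ contains precisely two maximal layered solid tori, and by the previous paragraph both are of type one. Every edge of degree three is $\varphi$--even and, the triangulation having at least three tetrahedra, is the base edge of a $\lst_2$--subcomplex lying in a unique maximal layered solid torus $\T(e)$. I then invoke the uniqueness assertion ``$e=\e(T)$'' isolated in the proof of Theorem~\ref{thm:really main}: each maximal layered solid torus carries at most one edge of degree three. With only two such solid tori this gives $\even_3\le 2$, and the exact relation above then forces $2(\odd(\lay)-\even(\lay))\ge 2-\even_3\ge 0$, completing the proof; equality $\even=\odd$ occurs exactly when $\even_3=2$, as happens for the balanced family $\lay_{2n,1}$.

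The hard part is this last bound. The Euler-characteristic computation of the second paragraph is formal once $B=C=0$ is known, and the reduction to $\even_3\le2$ is immediate; all of the genuine content lies in controlling the edges of degree three inside a single minimal layered solid torus, equivalently in the continued-fraction description of $\lay_{2n,q}$. A maximal layered solid torus carrying two base edges of degree three would either force a low-degree interior edge incompatible with the minimal layering or produce two $\lst_2$--subcomplexes whose union meets a further layered solid torus in a single face, contradicting 0--efficiency of $\lay$. If instead one wishes to avoid the sharp count $\even_3\le2$, the robust alternative is to run the association-and-deficit bookkeeping of Theorem~\ref{thm:really main} as an \emph{upper} bound: each of the two type-one maximal layered solid tori contributes at most $+1$ to $\even_3-\sum_{j\ge5}(j-4)\even_j$, the value $+1$ occurring precisely when the solid torus is a $\lst_2$ with a $\varphi$--even univalent boundary edge of degree four, so that the net total is at most $2$, which is exactly what the exact relation needs.
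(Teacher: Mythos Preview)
Your central idea---using $\chi(S_\varphi(\lay))=1-\even(\lay)$ in~(\ref{eq:e}) to force $B=C=0$ and thereby upgrade the inequality chain of Subsection~\ref{subsec:comb bds} to the \emph{exact} identity $2(\odd-\even)=2-\even_3+\sum_{j\ge5}(j-4)\even_j$---is correct and is genuinely different from the paper's route. The paper instead argues by contradiction: assuming $\even(\lay)>\odd(\lay)$, Theorem~\ref{thm:really main} applies and certifies that $\lay$ \emph{is} the minimal triangulation; only then may one feed $\lay$ into inequalities~(\ref{inequ:odd and even}) and~(\ref{inequ for min tri general}), obtaining $\even_3>2$, which is impossible in a layered triangulation. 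Your approach sidesteps Theorem~\ref{thm:really main} entirely, which is attractive.

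There is, however, a circularity in your auxiliary steps. You invoke Proposition~\ref{pro:degree one and two edges} to exclude edges of degree one or two, and the uniqueness assertion $e=\e(T)$ from the proof of Theorem~\ref{thm:really main} (which rests on Proposition~\ref{pro:degree three edges}) to bound $\even_3$. Both propositions are stated for \emph{minimal} $0$--efficient triangulations, and you do not know $\lay$ is minimal---that is exactly what the paper's invocation of Theorem~\ref{thm:really main} buys, and what you are trying to avoid. (Incidentally, your claim that $\lay$ always has at least three tetrahedra is false: the minimal layered triangulation of $L(8,3)$ has two.) Your final paragraph senses the issue but still appeals to $0$--efficiency and the deficit bookkeeping, both of which sit inside the minimal-triangulation framework.

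The fix is to argue directly from the explicit combinatorics of $\lay$, which is what the paper's one-line ``not possible in a layered triangulation'' really means. The minimal layered solid torus is built without backtracking: after the first layer the boundary degrees are always of the form $(d,3,1)$ with $d\ge4$, so every interior edge has degree $\ge3$ with at most one of degree exactly $3$ (the base edge), and the fold contributes at most one further degree-$3$ edge and no edge of degree $\le2$ once $\R P^3$ and $L(4,1)$ are excluded. This yields $\even_1=\even_2=0$ and $\even_3\le2$ by inspection, and then your identity finishes the proof. So your strategy stands; only the citations need to be replaced by this direct structural check.
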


\begin{proof}
Assume that $\even(\lay_{2n,q}) > \odd(\lay_{2n,q}).$ Then Theorem \ref{thm:really main} implies that $\lay_{2n,q}$ is the unique minimal triangulation. It follows from the proof that $L(2n,q) \neq \R P^3$ or $L(4,1),$ since then $\even(\lay_{2n,q}) = \odd(\lay_{2n,q}).$ Hence (\ref{inequ:odd and even}) implies
\begin{equation*}
4 - 2T(\lay_{2n,q}) - 4 \chi(S_\varphi(\lay_{2n,q})) \ge 2 + 2 \even(\lay_{2n,q}) - 2 \odd(\lay_{2n,q}) > 2.
\end{equation*}
Using (\ref{inequ for min tri general}), this gives $\even_3 >2,$ which is not possible in a layered triangulation.
\end{proof}

\begin{corollary}\label{cor:example construction}
Let $L(2n,q)$ be a lens space with even fundamental group, and not homeomorphic to $\R P^3$ or $L(4,1).$ Denote the minimal layered triangulation $\lay_{2n,q},$ and suppose that the edges have been coloured using the non-trivial homomorphism $\varphi \co \pi_1 (L(2n,q)) \to \Z_2.$ The following are equivalent:
\begin{enumerate}
\item $\even(\lay_{2n,q}) \ge \odd(\lay_{2n,q});$
\item $\even(\lay_{2n,q}) = \odd(\lay_{2n,q});$
\item there are precisely two $\varphi$--even edges of degree three, and all other $\varphi$--even edges are of degree four.
\end{enumerate}
Moreover, the only layered triangulation of $\lay_{2n,q}$ satisfying (3) is the minimal layered triangulation.
\end{corollary}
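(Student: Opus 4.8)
The plan is to prove the cycle of implications $(1)\Rightarrow(2)\Rightarrow(3)\Rightarrow(1)$ and then establish the uniqueness statement. The implication $(2)\Rightarrow(1)$ is trivial, so the content lies in $(1)\Rightarrow(2)$ and in relating these counting conditions to the explicit degree condition $(3)$. Throughout I would set $T=T(\lay_{2n,q})$, write $\even=\even(\lay_{2n,q})$, $\odd=\odd(\lay_{2n,q})$, and use the relation $\even+\odd = E = T+1$ together with the inequalities assembled in Subsection~\ref{subsec:comb bds}.

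For $(1)\Rightarrow(2)$ I would argue exactly as in the preceding Corollary: assuming $\even\ge\odd$, Theorem~\ref{thm:really main} forces $\lay_{2n,q}$ to be the unique minimal triangulation, and inequality~(\ref{inequ:odd and even}) combined with~(\ref{inequ for min tri general}) yields
\begin{equation*}
\even_3 \ge 2 + 2\even - 2\odd + \sum_{j=5}^{\infty}(j-4)\even_j.
\end{equation*}
In a layered triangulation of $L(2n,q)$ with $q=1$ we have the explicit degree sequence computed in Subsection~\ref{sec:minimal layered extension}: all interior edges have degree three or four, so $\even_3\le 2$ and $\even_j=0$ for $j\ge5$ among the relevant edges; the only high-degree edge is $e_1$, whose parity I would track via $[e_k]=k$. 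This pins $\even_3=2$, kills all contributions from $j\ge5$, and forces $2\even-2\odd\le 0$, i.e.\ $\even\le\odd$, hence equality. The same computation simultaneously shows that the two degree-three edges are $\varphi$--even and that every other $\varphi$--even edge has degree four, which is precisely $(3)$; thus $(2)\Rightarrow(3)$ falls out of the inequality being forced to an equality. For $(3)\Rightarrow(1)$ I would plug condition~(3) directly into the Euler-characteristic identity~(\ref{eq:e}) and the edge count $\even+\odd=T+1$: knowing $\even_3=2$ and $\even_j=0$ for $j\ne3,4$ determines $\even$ in terms of $T$, and comparing with the layered degree table gives $\even\ge\odd$.

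The main obstacle I anticipate is the uniqueness clause: showing that among \emph{all} layered triangulations of $L(2n,q)$, only the minimal layered one satisfies $(3)$. Here I would use the structure of a layered triangulation as two maximal layered solid tori glued along their boundary (Section~\ref{sec:Intersections of maximal layered solid tori}), noting that the minimal layered triangulation is the unique one with a single interior edge of degree three and all others of degree four, as recorded in Subsection~\ref{sec:minimal layered extension}. Any non-minimal layered triangulation arises by additional layerings, each of which raises the degree of some boundary edge past four while creating a new degree-three edge; I would show this produces either a $\varphi$--even edge of degree at least five or more than two $\varphi$--even edges of degree three, violating~(3). The delicate point is controlling the $\varphi$--colouring under repeated layering, for which I would invoke the fact (proved just before the Definition of types of layered solid tori) that the colouring of a layered solid torus is determined by the image of its longitude under $\varphi$, so that the parity of each newly layered edge is forced and can be read off from the relation $e_{k+2}=e_1+e_{k+1}$ established in Subsection~\ref{sec:minimal layered extension}.
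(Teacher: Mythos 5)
Your outline has the right overall shape, but the key steps are argued only for $q=1$, while the corollary concerns arbitrary $L(2n,q)$ with even fundamental group. Both your $(1)\Rightarrow(2)$ and your $(2)\Rightarrow(3)$ invoke ``the explicit degree sequence computed in Subsection~\ref{sec:minimal layered extension}'' (all interior edges of degree three or four, one high-degree edge $e_1$), and that table describes only the triangulations $\lay_k$ of $L(2n,1)$. For general $q$ the assertion is false: the whole point of condition (3) is that it singles out a proper subfamily of lens spaces (the subtree in Figure~\ref{fig:L-graph}), and for most $L(2n,q)$ the minimal layered triangulation has $\varphi$--even edges of degree five or more, or the wrong number of degree-three edges, so (3) fails. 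What is actually needed is (a) the preceding corollary, which gives $\even(\lay_{2n,q})\le\odd(\lay_{2n,q})$ unconditionally and hence $(1)\Leftrightarrow(2)$ immediately, and (b) the fact that \emph{any} layered triangulation has at most two edges of degree three, so that under (1) inequality (\ref{eqn:degree three proof}) forces $\even_3=2$ and $\even_j=0$ for $j\ge5$, which is $(1)\Rightarrow(3)$. Restricting to the $q=1$ degree table proves neither implication in the stated generality.

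Your Euler-characteristic route to $(3)\Rightarrow(1)$ can be made to work, but only after supplying an input you do not name: one needs $\chi(S_\varphi(\lay_{2n,q}))=1-\even(\lay_{2n,q})$ (equivalently, that every tetrahedron of $\lay_{2n,q}$ is of type one), which comes from Theorem~8.2 of \cite{JR:LT}; ``comparing with the layered degree table'' again only covers $q=1$. The paper instead proves $(3)\Rightarrow(2)$ by reconstructing any layered triangulation satisfying (3): the degree condition forces the construction to alternate between layering on a $\varphi$--even and a $\varphi$--odd edge and to end with a fold along a $\varphi$--even degree-three edge, whence $\even=\odd$; since this procedure traces a unique path without backtracking in a subtree of the $L$--graph, the same argument delivers the uniqueness clause. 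Your separate uniqueness argument is not correct as stated: a non-minimal layered triangulation is not obtained from the minimal one by ``additional layerings'' (it corresponds to a different, possibly backtracking, path in the $L$--graph), and a single layering creates a new \emph{boundary} edge of degree one, not a degree-three edge; the violations of (3) you want must be extracted from the forced alternating structure of the layering sequence, as in the paper's proof.
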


\begin{proof}
We have $(1) \Leftrightarrow (2)$ due to the preceding corollary. We have $(1) \Rightarrow (3)$ since (\ref{eqn:degree three proof}) holds with $\even_3=2.$ 

Hence assume that we are given an arbitrary layered triangulation $\tri$ of $L(2n,q)$ with precisely two $\varphi$--even edges of degree three, and all other $\varphi$--even edges are of degree four. We will show that this implies that there are as many $\varphi$--even as $\varphi$--odd edges.

Each tetrahedron in $\tri$ is of type one and we may describe $\tri$ starting from $T_1 \cong \lst_1=\{1,2,3\}$ with $e_1$ and $e_3$ $\varphi$--odd, and $e_2$ $\varphi$--even. Since $\tri$ contains two edges of degree three, there is a tetrahedron layered on $e_2;$ the resulting layered solid torus $T_2\cong \lst_2=\{1,3,4\}$ has univalent edge the $\varphi$--even edge, $e_4.$ Hence in $\tri,$ there is a tetrahedron layered on either of the $\varphi$--odd boundary edges $e_1$ or $e_3$ of $T_2,$ giving a layered solid torus $T_3$ which is either isomorphic to $\{3,4,7\}$ or $\{1,4,5\}$ with $\varphi$--odd univalent edge. The even boundary edge $e_4$ has $T_3$--degree three; hence either $\tri$ is obtained from folding along $e_4$ or by layering on $e_4.$ The construction continues inductively. Since we alternate between layering on $\varphi$--even and $\varphi$--odd edges, and in the end fold along an $\varphi$--even edge of degree three, it follows that the resulting triangulation has as many $\varphi$--odd edges as $\varphi$--even edges. This, in particular, proves $(3) \Rightarrow (2)$ since the minimal layered triangulation must be constructed in this way.

To prove the last statement, notice that the above procedure describes the sub-tree of the $L$--graph of \cite{JR:LT} shown in Figure \ref{fig:L-graph}, and that any layered triangulation satisfying (3) corresponds to a unique path without looping or backtracking in this sub-tree. Hence it is a minimal layered triangulation.
\end{proof}

\begin{remark}
It is shown in \cite{JR:LT} that the minimal layered extension of $\{p,q,p+q\}$ contains $E(q,p)-1$ tetrahedra, where $q>p>0.$ This implies that upon folding along edge $q,$ the minimal layered triangulation of $L(2p+q, p)$ contains $E(q,p)-1$ tetrahedra. The formula for the complexity given in the introduction follows from $E(2p+q, p) = E(q,p) +2.$
\end{remark}

\begin{figure}[t]
\psfrag{aa}{{\scriptsize $0/1$}}
\psfrag{bb}{{\scriptsize $1/1$}}
\psfrag{cc}{{\scriptsize $1/2$}}
\psfrag{dd}{{\scriptsize $1/3$}}
\psfrag{ee}{{\scriptsize $3/4$}}
\psfrag{ff}{{\scriptsize $1/4$}}
\psfrag{gg}{{\scriptsize $4/7$}}
\psfrag{hh}{{\scriptsize $3/7$}}
\psfrag{ii}{{\scriptsize $4/5$}}
\psfrag{jj}{{\scriptsize $1/5$}}
\psfrag{kk}{{\scriptsize $7/10$}}
\psfrag{ll}{{\scriptsize $3/10$}}
\psfrag{mm}{{\scriptsize $5/6$}}
\psfrag{nn}{{\scriptsize $1/6$}}
\psfrag{oo}{{\scriptsize $2/3$}}
\psfrag{pp}{{\scriptsize $3/5$}}
\psfrag{qq}{{\scriptsize $2/5$}}
\psfrag{rr}{{\scriptsize $7/17$}}
\psfrag{ss}{{\scriptsize $3/13$}}
\psfrag{tt}{{\scriptsize $5/11$}}
\psfrag{uu}{{\scriptsize $1/7$}}
\psfrag{vv}{{\scriptsize $17/24$}}
\psfrag{ww}{{\scriptsize $7/24$}}
\psfrag{xx}{{\scriptsize $13/16$}}
\psfrag{yy}{{\scriptsize $3/16$}}
\psfrag{zz}{{\scriptsize $11/16$}}
\psfrag{za}{{\scriptsize $5/16$}}
\psfrag{zb}{{\scriptsize $7/8$}}
\psfrag{zc}{{\scriptsize $1/8$}}
\psfrag{ab}{{\scriptsize $p/q$}}
\psfrag{bc}{{\scriptsize $q/p+q$}}
\psfrag{cd}{{\scriptsize $p/p+q$}}
\begin{center}
      \includegraphics[width=12cm]{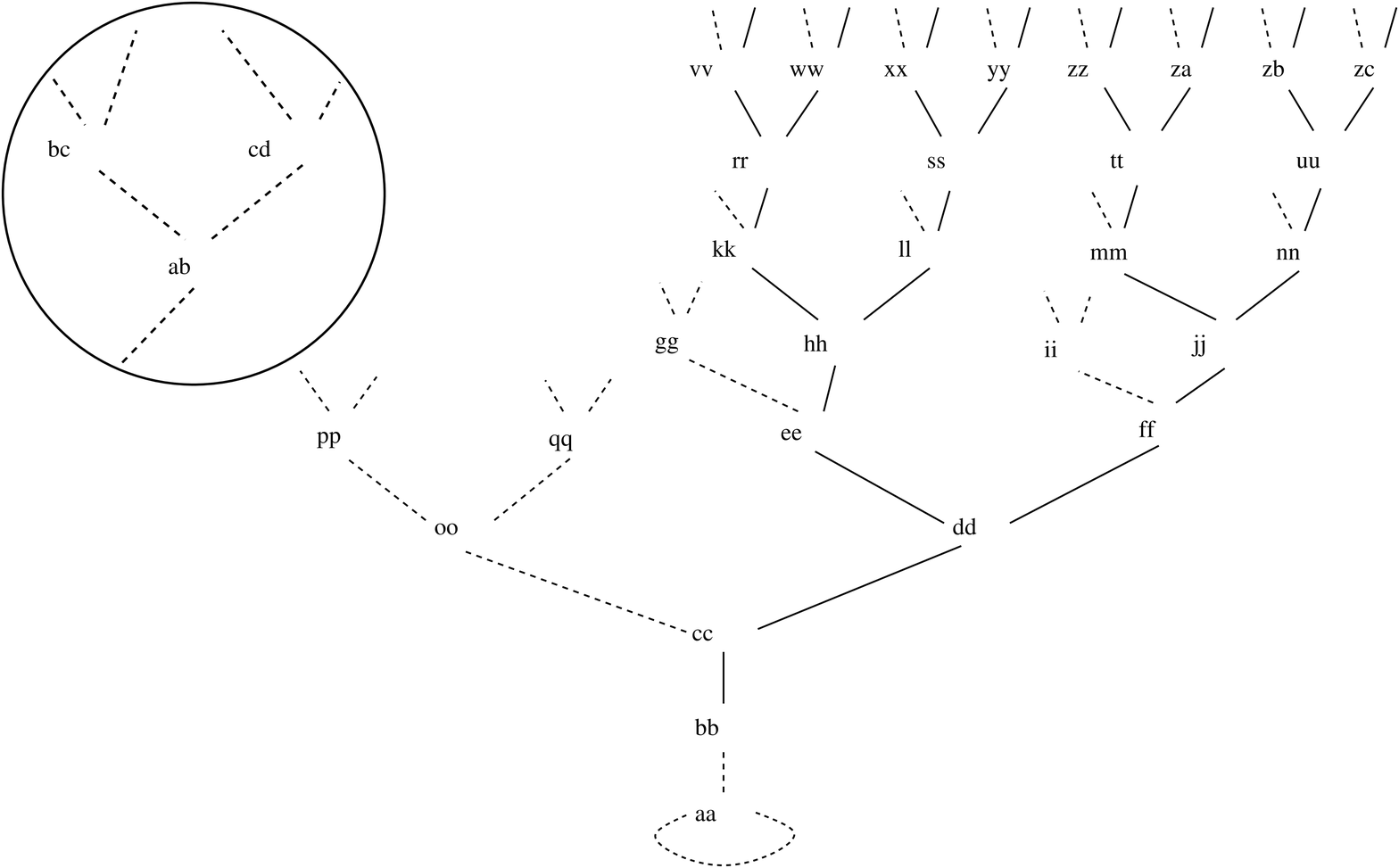}
\end{center}
\caption{A complete description of the set of all lens spaces to which Theorem \ref{thm:really main} applies: Shown is part of the $L$--graph of \cite{JR:LT}. The 0--cells are in bijective correspondence with one--vertex triangulations on the boundary of the solid torus. The unique path from $p/q$ to $1/1$ without backtracking describes the minimal layered triangulation of the solid torus extending $\{p,q,p+q\}$ when $q>p>0.$ The minimal layered triangulation of a lens space satisfies the hypothesis of Theorem \ref{thm:really main} if and only if it is obtained by folding the minimal layered extension of $\{p,q,p+q\}$ along $q,$ where $p/q\neq 1/2$ is a vertex of valence two in the infinite subtree made up of the solid segments. The associated lens space is then $L(2p+q,p).$}
\label{fig:L-graph}
\end{figure}


\subsection{Families of examples}

As noted earlier, each of the lens spaces $L(2n,1)$ contains as many $\varphi$--odd edges as $\varphi$--even edges, and hence satisfies Conjecture \ref{conj:layered lens is minimal}. To recognise more families that satisfy this condition, we study layered solid tori that are isomorphic to $\lst_m$ for some $m\ge 2.$

\begin{definition}($\lst$--maximal layered solid torus)
A layered solid torus is an \emph{$\lst$--maximal layered solid torus with respect to $\tri$} in $M$ if it is isomorphic to $\lst_m$ for some $m\ge 2$ and it is not strictly contained in any other layered solid torus isomorphic to $\lst_h$ for some $h>m.$
\end{definition}

If $\tri$ has more than two tetrahedra, then any edge of degree three is contained in an $\lst$--maximal layered solid torus. Moreover, this $\lst$--maximal layered solid torus has the edge of degree three as its base-edge and all other interior edges have degree four.

To begin with, we analyse the result of identifying two layered solid tori, $T_1\cong \lst_t$ and $T_2\cong \lst_s,$ $t \ge s \ge 1,$ along their boundaries. The identification of the solid tori is uniquely determined by a pairing of the boundary edges, and hence there are six possibilities:

$(s+2,s+1,1) \leftrightarrow (t+1,t+2,1):$ This gives $L(s+t+3,1)$ with minimal layered triangulation (as can be seem from the continued fraction expansion), and if $s+t > 2,$ there are precisely two $\lst$--maximal layered solid tori which meet in $s+t-2$ tetrahedra. 

$(s+2,s+1,1) \leftrightarrow (t+1,1,t+2):$ This gives $L((s+1)(t+2)+1,t+2)$ with minimal layered triangulation. If $s \ge 2,$ there are precisely two $\lst$--maximal layered solid tori which meet in one tetrahedron; one is isomorphic to $\lst_s,$ the other to $\lst_{t+1}.$ If $s=1,$ then there is at most one $\lst$--maximal layered solid torus.

$(s+2,s+1,1) \leftrightarrow (1,t+2,t+1):$ This gives $L((s+2)(t+1)+1,t+1)$ with minimal layered triangulation. If $s \ge 2,$ there are precisely two $\lst$--maximal layered solid tori which meet in one tetrahedron; one is isomorphic to $\lst_{s+1},$ the other to $\lst_{t}.$ If $s=1,$ then there is at most one $\lst$--maximal layered solid torus.

$(s+2,s+1,1)  \leftrightarrow (t+2,t+1,1):$ This gives $L(t-s,1),$ and there is an edge of degree two. The triangulation is not a minimal layered triangulation (since neither $t-s=3$ and $t+s=2$ nor $t-s=4$ and $t+s=1$ has integral solutions).

$(s+2,s+1,1) \leftrightarrow (t+2,1,t+1):$ This gives $L(s(t+1)+t,t+1)$ and there is an edge of degree two. An algebraic argument shows again that the triangulation is not a minimal layered triangulation unless $s=t=1$ and $M=L(3,1).$ In particular, the triangulation does not contain an $\lst$--maximal layered solid torus.

$(s+2,s+1,1) \leftrightarrow (1,t+1,t+2):$ This gives $L((s+1)(t+2)+t+1,t+2)$ with a minimal layered triangulation. If $s \ge 2,$ there are precisely two $\lst$--maximal layered solid tori which meet in two faces.

\begin{lemma}\label{lem:min lens char 1}
If two distinct $\lst$--maximal layered solid tori meet in precisely two faces then $M$ is a lens space with layered triangulation. If it is the minimal layered triangulation, then $M$ is homeomorphic to $L((s+1)(t+2)+t+1,t+2),$ where $t \ge s \ge 2.$
\end{lemma}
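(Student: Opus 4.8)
The first assertion is immediate. The two $\lst$--maximal layered solid tori are in particular layered solid tori, so if they meet in precisely two faces, Lemma~\ref{lem:meet in 2 faces} applies and shows that $M$ is a lens space with layered triangulation.

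For the second assertion, I would first check that the two given $\lst$--maximal layered solid tori decompose $M.$ Denote them $T_1$ and $T_2.$ As in the proof of Lemma~\ref{lem:meet in 2 faces}, the two faces in which they meet must be their boundary faces, so $T_1\cup T_2$ is closed; since $M$ is connected, $M=T_1\cup T_2.$ Being $\lst$--maximal, each $T_i$ is isomorphic to $\lst_m$ for some $m\ge 2,$ and after relabelling so that $T_1\cong\lst_t$ and $T_2\cong\lst_s$ with $t\ge s,$ we have $t\ge s\ge 2.$ Thus $M$ is obtained by identifying the boundaries of $\lst_t$ and $\lst_s,$ which is exactly the situation enumerated in the six cases preceding the statement.

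The plan is then to match the hypotheses against that enumeration. The minimality hypothesis rules out the two gluings $(s+2,s+1,1)\leftrightarrow(t+2,t+1,1)$ and $(s+2,s+1,1)\leftrightarrow(t+2,1,t+1),$ since these produce a triangulation with an edge of degree two that is not a minimal layered triangulation; the latter moreover contains no $\lst$--maximal layered solid torus at all. Of the four remaining gluings, three --- giving $L(s+t+3,1),$ $L((s+1)(t+2)+1,t+2)$ and $L((s+2)(t+1)+1,t+1)$ --- have their two $\lst$--maximal layered solid tori meeting in at least one tetrahedron (in $s+t-2,$ respectively one, tetrahedra) rather than in precisely two faces, and so are excluded by the hypothesis that $T_1$ and $T_2$ meet only in two faces. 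The sole remaining possibility is the gluing $(s+2,s+1,1)\leftrightarrow(1,t+1,t+2),$ for which, when $s\ge 2,$ the two $\lst$--maximal layered solid tori meet in precisely two faces; this yields $M\cong L((s+1)(t+2)+t+1,t+2)$ with $t\ge s\ge 2,$ as claimed.

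The main obstacle, and the only place where genuine work is hidden, is the verification --- internal to the case analysis --- that in the three excluded minimal cases the $\lst$--maximal layered solid tori genuinely overlap in a tetrahedron, so that the original $T_1$ and $T_2$ are \emph{not} themselves $\lst$--maximal there, whereas in the surviving case they meet only along their boundary. This is precisely the content of the enumeration already carried out, so the argument reduces to invoking those computations, and the uniqueness of the minimal layered triangulation from \cite{JR:LT} ensures there is no further ambiguity once $M$ has been identified.
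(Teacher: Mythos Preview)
Your proof is correct and follows essentially the same route as the paper: invoke Lemma~\ref{lem:meet in 2 faces} for the first assertion, observe that the two $\lst$--maximal solid tori (each with at least two tetrahedra, whence $t\ge s\ge 2$) decompose $M$, and then read off from the six-case enumeration that only the gluing $(s+2,s+1,1)\leftrightarrow(1,t+1,t+2)$ is compatible with minimality and with the two $\lst$--maximal solid tori meeting in precisely two faces. The paper's proof is simply terser, leaving the case-by-case elimination implicit in the phrase ``follows directly from the stated possibilities.''
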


\begin{proof}
It follows from Lemma \ref{lem:meet in 2 faces} that $M$ is a lens space with layered triangulation. An $\lst$--maximal layered solid torus contains at least two tetrahedra. The lemma now follows directly from the stated possibilities.
\end{proof}

Note that if $M$ as in the above lemma has even fundamental group, then $s+2 \leftrightarrow 1$ and $1 \leftrightarrow t+2$ imply that $s$ and $t$ are odd; hence the number of tetrahedra is even. In particular, the number of edges in the triangulation is odd, so the number of $\varphi$--odd edges is greater than the number of $\varphi$--even edges.

Recall that $L(p_1, q_1) = L(p_2, q_2)$ if and only if $p_1=p_2$ and $q_1q_2^{\pm 1} \equiv \pm 1(p_1).$ The following two results give Theorems~\ref{thm:main}, \ref{thm:main2} and \ref{thm:main3}.

\begin{proposition}\label{pro:layered lens char 1}
If two distinct $\lst$--maximal layered solid tori share a tetrahedron, then $M$ is a lens space with minimal layered triangulation. Moreover, $M$ is contained in one of the following, mutually exclusive families:
\begin{enumerate}
\item $L(s+t+3,1),$ $t\ge s\ge 1;$
\item $L( (s+2)(t+1)+1 , t+1),$ where $t >  s >1,$
\item $L( (s+1)(t+2)+1 , t+2),$ where $t > s > 1,$
\item $L( (t+1)(t+2)+1 , t+2)=L( (t+1)(t+2)+1 , t+1)$ where $t \ge 2.$
\end{enumerate}
\end{proposition}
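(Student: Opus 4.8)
The plan is to combine the topological dichotomy from Lemma~\ref{lem:meet in tet} with the explicit enumeration of the six boundary pairings carried out just before Lemma~\ref{lem:min lens char 1}. First I would observe that if two distinct $\lst$--maximal layered solid tori share a tetrahedron, then since neither contains the other (they are $\lst$--maximal, and if one contained the other they would not be distinct maximal objects of the same family), Lemma~\ref{lem:meet in tet} forces $M$ to be a lens space with layered triangulation. By the uniqueness of the minimal layered triangulation established in \cite{JR:LT} together with the hypotheses of this section, this layered triangulation must in fact be the minimal layered triangulation; this gives the first sentence of the statement.

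Next I would identify exactly how the two $\lst$--maximal solid tori sit inside $M$. Writing $T_1 \cong \lst_t$ and $T_2 \cong \lst_s$ with $t \ge s$, the manifold is obtained by gluing them along their common boundary torus, and the gluing is determined by a pairing of the three boundary edges $\{s+2,s+1,1\}$ of $T_1$ with the three boundary edges $\{t+2,t+1,1\}$ of $T_2$. This is precisely the six--case analysis already displayed: each of the six edge pairings yields a candidate lens space, written there in the form $(s+2,s+1,1)\leftrightarrow(\cdots)$. The task is therefore to go through those six cases and keep only those compatible with the present hypothesis that the two solid tori \emph{share a tetrahedron} (equivalently, meet in more than just their boundary faces or a single edge). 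Two of the six pairings give triangulations that are not minimal layered (the $(t-s,1)$ and $L(s(t+1)+t,t+1)$ cases, where an edge of degree two appears), so they are excluded. The pairing giving a meeting in two faces yields Lemma~\ref{lem:min lens char 1}'s family, which again is excluded here because the solid tori must share a full tetrahedron rather than merely two faces. This leaves exactly the pairings producing families (1)--(4).

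I would then match the three surviving edge pairings to the listed families: $(s+2,s+1,1)\leftrightarrow(t+1,t+2,1)$ gives $L(s+t+3,1)$, family (1); $(s+2,s+1,1)\leftrightarrow(1,t+2,t+1)$ gives $L((s+2)(t+1)+1,t+1)$, family (2); and $(s+2,s+1,1)\leftrightarrow(t+1,1,t+2)$ gives $L((s+1)(t+2)+1,t+2)$, family (3). The diagonal coincidence $s=t$ in families (2) and (3) collapses to the single family (4), where the homeomorphism $L((t+1)(t+2)+1,t+2)=L((t+1)(t+2)+1,t+1)$ follows from the standard classification $L(p,q_1)=L(p,q_2)$ iff $q_1 q_2^{\pm1}\equiv\pm1\ (p)$, recalled just above the proposition. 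The inequalities $t>s>1$ in (2) and (3) versus $t\ge s\ge 1$ in (1) come directly from the ``if $s\ge 2$'' and ``if $s=1$'' branching already recorded in the enumeration: when $s=1$ there is at most one $\lst$--maximal solid torus, so two distinct ones sharing a tetrahedron cannot occur, forcing $s>1$ in (2) and (3).

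The main obstacle I anticipate is verifying that the families are genuinely \emph{mutually exclusive} and that the stated index ranges are exactly right, rather than the soft topological step. Concretely, I must check that no lens space appears in two different families, which requires comparing the values of $p$ and $q$ across families using the classification relation $q_1q_2^{\pm1}\equiv\pm1\ (p)$, and that the boundary of each case (e.g.\ $s=1$, or $s=t$) is assigned to the correct single family without overlap or omission. This is a finite but delicate arithmetic bookkeeping: I would organise it by first separating the $q=1$ family (1) from the rest, then showing families (2) and (3) have distinct second parameters $t+1$ versus $t+2$ and hence, given fixed $p$, cannot coincide except precisely along the diagonal that defines family (4). The risk is an off--by--one error in the ranges or a missed coincidence among the $q\neq 1$ families, so I would cross--check each claimed homeomorphism directly against the modular condition before asserting mutual exclusivity.
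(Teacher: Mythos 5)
Your overall skeleton matches the paper's: Lemma~\ref{lem:meet in tet} gives that $M$ is a lens space with layered triangulation, and the six boundary pairings listed before Lemma~\ref{lem:min lens char 1} are then sifted against the hypothesis. But there are two genuine problems. First, you describe the two $\lst$--maximal layered solid tori themselves as being ``glued along their common boundary torus,'' with the gluing given by a pairing of their boundary edge slopes. That cannot be right: by hypothesis they \emph{share a tetrahedron}, so they do not meet only along their boundaries. What the six--case enumeration actually classifies is a decomposition $M=\lst_s\cup_\partial\lst_t$ of the layered triangulation into two layered solid tori meeting exactly in their boundary faces; the $\lst$--maximal layered solid tori are then read off \emph{inside} each such picture (for instance, in the case giving family (2) they are isomorphic to $\lst_{s+1}$ and $\lst_t$ and overlap in one tetrahedron, straddling the gluing locus). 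Conflating the two pairs of solid tori also muddies your exclusion of the two degree--two--edge cases: they are ruled out not because they fail to be minimal layered (which, as a reason, would be circular), but because, as recorded in the enumeration, those triangulations do not contain two distinct $\lst$--maximal layered solid tori at all.

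Second, the mutual exclusivity of families (2) and (3) is left as a plan, and the one concrete reason you offer --- that the second parameters $t+1$ and $t+2$ are ``distinct'' --- is not a proof, since $L(p,q_1)=L(p,q_2)$ can hold with $q_1\ne q_2$; indeed $(s+2)(t+1)\equiv -1$ modulo $p$ here, so $q\equiv\pm(s+2)$ is exactly the dangerous congruence your plan must eliminate. Your purely arithmetic route via the classification relation can be pushed through, but it requires checking several congruence branches against equality of the $p$'s, and you have not done this. The paper avoids the bookkeeping: since the minimal layered triangulation is unique, a lens space lying in both (2) and (3) would have to realise the same unordered pair of $\lst$--maximal layered solid torus sizes in both descriptions; equating these pairs leaves only one branch (the other being the diagonal that defines family (4)), and that branch dies in one line by comparing the orders of the fundamental groups. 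If you keep the arithmetic approach you must actually execute the modular case analysis; otherwise the combinatorial rigidity argument is the shorter road.
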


\begin{proof}
Since the $\lst$--maximal layered solid tori are distinct, it follows from Lemma~\ref{lem:meet in tet} that $M$ is a lens space. Moreover, $M$ can be described as the union of two layered solid tori, $T_1$ and $T_2,$ along their boundary, where $T_1 \cong \lst_s,$ and $T_2\cong \lst_t,$ with $s,t \ge 1.$ Without loss of generality, assume $t \ge s \ge 1.$ The above possibilities show that, under the assumption that there are two distinct $\lst$--maximal layered solid tori, $M$ is one of the cases listed in (1)--(4). The equality in (4) follows since $(t+1)(t+2) \equiv -1$ modulo $(t+1)(t+2)+1.$ Also note that a lens space as in (4) with $t=1$ has a minimal layered triangulation with two tetrahedra and does not contain an $\lst$--maximal layered solid torus. It remains to show that the cases are mutually exclusive.

Since the minimal layered triangulation is unique, it follows from the description of intersections of $\lst$--maximal layered solid tori that the lens spaces in the first list do not appear in the second or third unless $s+t-2=1,$ whence $M=L(6,1).$ This does not appear in (2) or (3). It remains to show that the possibilities in (2) and (3) are complete and exclusive.

Let $t_1\ge s_1 \ge 1$ and $t_2\ge s_2 \ge 1.$ The minimal layered triangulations of $L( (s_1+2)(t_1+1)+1 , t_1+1)$ and $L( (s_2+1)(t_2+2)+1 , t_2+2)$ are characterised by the fact that there are two $\lst$--ma-la-so-tos meeting in one tetrahedron. It follows that ($s_1=s_2+1$ and $t_1+1=t_2$) or ($s_1=t_2$ and $t_1+1=s_2+1$). In the second case $s_1 = t_2 \ge s_2 = t_1 \ge s_1,$ and hence these are the cases not contained in (2) or (3), but in (4). It follows that there are elements in (2) and (3) such that
$$L( (s_1+2)(t_1+1)+1 , t_1+1)= L( (s_2+1)(t_2+2)+1 , t_2+2)$$
only if $s_1=s_2+1$ and $t_1+1=t_2.$ Since the fundamental groups must have the same order, we have $t_2=s_2+1,$ giving $t_1+1=t_2=s_2+1=s_1\le t_1;$ a contradiction.
\end{proof}

\begin{corollary}
If $M$ is one of the lens spaces listed in (1)--(4) of Proposition~\ref{pro:layered lens char 1} and has even fundamental group, then there are as many $\varphi$--even edges as there are $\varphi$--odd edges, where $\varphi\co \pi_1(M) \to Z_2$ is the unique non-trivial homomorphism. Moreover, $M$ is either in 
\begin{enumerate}
\item[] (1) and $s+t$ is odd;
\item[] (2) and $s$ is odd and $t$ is even;
\item[] (3) and $s$ is even and $t$ is odd.
\end{enumerate}
\end{corollary}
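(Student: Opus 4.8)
The plan is to prove the two assertions by a parity computation followed by a direct edge count. First I would settle the ``Moreover'' clause (including the exclusion of family (4)) by computing the parity of $|\pi_1(M)|$ in each case: in (1) one has $|\pi_1|=s+t+3$, which is even exactly when $s+t$ is odd; in (2) one has $|\pi_1|=(s+2)(t+1)+1$, which is even exactly when $s+2$ and $t+1$ are both odd, i.e. $s$ odd and $t$ even; in (3) one has $|\pi_1|=(s+1)(t+2)+1$, even exactly when $s$ even and $t$ odd; and in (4) one has $|\pi_1|=(t+1)(t+2)+1$, which is always odd since $(t+1)(t+2)$ is a product of consecutive integers. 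Hence even fundamental group occurs only in the three recorded cases, and in each of them $s+t$ is odd.

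For the equality $\even=\odd$ I would compute both numbers from the decomposition $M=T_1\cup_\partial T_2$ with $T_1\cong\lst_s$ and $T_2\cong\lst_t$ glued along their boundary tori (one of the six identifications analysed above). Since $\lst_m$ has $m+2$ edges, three of which lie in the boundary, the glued triangulation has $T=s+t$ tetrahedra and $E=(s+2)+(t+2)-3=s+t+1$ edges. The first key point is that, because $|\pi_1(M)|=2n$ is even, the core of each Heegaard solid torus generates $H_1(M)=\Z_{2n}$, hence is $\varphi$--odd; by the lemma classifying the types of a layered solid torus, both $T_1$ and $T_2$ are therefore of type one. Using $[e_j]=j$ from Section~\ref{sec:triangulations}, the edge $e_j$ of a type one $\lst_m$ satisfies $\varphi[e_j]\equiv j \pmod 2$, so its $\varphi$--even edges are exactly those of even index, giving $\lfloor (m+2)/2\rfloor$ of them.

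The second key point concerns the shared boundary. The three boundary edges of $\lst_m$ are $e_1,e_{m+1},e_{m+2}$, of classes $1,\,m+1,\,m+2$; in a type one torus $e_1$ is $\varphi$--odd while exactly one of $e_{m+1},e_{m+2}$ is $\varphi$--even (according to the parity of $m$), so each of $T_1,T_2$ carries exactly one $\varphi$--even boundary edge. Since $\varphi$ is globally defined on $M$, the gluing must identify the $\varphi$--even boundary edge of $T_1$ with that of $T_2$. Inclusion--exclusion then yields $\even=\lfloor(s+2)/2\rfloor+\lfloor(t+2)/2\rfloor-1$. When $s+t$ is odd the integers $s,t$ have opposite parity and this evaluates to $(s+t+1)/2=E/2$, whence $\even=\odd$. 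This refines the inequality $\even\le\odd$ of the preceding corollary: equal parities of $s,t$ would force $\even\neq\odd$, but those cases give odd $|\pi_1|$ and so do not occur here.

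The main obstacle is the combinatorial bookkeeping of the $\varphi$--colouring: verifying that both $T_1$ and $T_2$ are of type one and correctly identifying which of the three boundary edges of each $\lst_m$ is $\varphi$--even. Once the boundary colouring is pinned down, the inclusion--exclusion count is routine, and comparing it with the per--family parities of $|\pi_1(M)|$ simultaneously delivers $\even=\odd$ and the ``Moreover'' classification.
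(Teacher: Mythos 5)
Your proof is correct and follows essentially the same route as the paper: both rule out family (4) and pin down the parities of $s,t$ (you via the parity of $|\pi_1(M)|$, the paper equivalently via consistency of $\varphi$ across the boundary pairing), and both then count $\varphi$--even edges torus-by-torus using $[e_j]=j$ and subtract the one identified $\varphi$--even boundary pair. Your inclusion--exclusion with floor functions just packages uniformly the case-by-case count $\tfrac12(s+2)+\tfrac12(t+1)-1=\tfrac12(s+t+1)$ that the paper carries out explicitly for case (3).
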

\begin{proof}
First note that every lens space in (4) has odd fundamental group. We give the argument for the third case; the others are analogous. Since the gluing is $(s+2,s+1,1) \leftrightarrow (t+1,1,t+2)$ to give $L((s+1)(t+2)+1,t+2),$ and we want to have even fundamental group, the paring $1 \leftrightarrow t+2$ forces $t$ odd, and $s+1\leftrightarrow 1$ forces $s$ even. Even edges in the $\lst$--layered solid tori correspond to $\varphi$--even edges in the lens space; similarly for odd edges. Since the gluing identifies one pair of even edges and two pairs of odd edges, we have
$$
\frac{1}{2}(s+2) + \frac{1}{2}(t+1) -1 = \frac{1}{2}(s+t+1)
$$
$\varphi$--even edges in $L((s+1)(t+2)+1,t+2),$ and
$$
\frac{1}{2}(s+2) + \frac{1}{2}(t+3) -2 = \frac{1}{2}(s+t+1)
$$
$\varphi$--odd edges in $L((s+1)(t+2)+1,t+2).$
\end{proof}

Any other example to which Theorem \ref{thm:really main} applies contains two $\lst$--maximal layered solid tori which meet in precisely two edges, one edge or not at all. Every example can be constructed as in the proof of Corollary \ref{cor:example construction}. For instance, take $s\ge 2$ even, start with $\lst_s = \{1, s+1, s+2\},$ then in turn layer on $1,$ $s+2,$ $s+1,$ $3s+4,$ $2s+3,$ $7s+10,$ $5s+7$ and fold along $17s+24.$ This yields $L(41s+58, 12s+17)$ with minimal layered triangulation having $s+7$ tetrahedra and as many even edges as odd edges. The continued fractions expansion verifies the number of tetrahedra:
$$\frac{41s+58}{12s+17} = [3,2,2,2,s+1],$$
and hence $E(41s+58, 12s+17) - 3 = (s + 10)-3 = s+7.$




\address{Department of Mathematics, Oklahoma State University, Stillwater, OK 74078-1058, USA}
\email{jaco@math.okstate.edu}

\address{Department of Mathematics and Statistics, The University of Melbourne, VIC 3010, Australia} 
\email{rubin@ms.unimelb.edu.au} 

\address{Department of Mathematics and Statistics, The University of Melbourne, VIC 3010, Australia} 
\email{tillmann@ms.unimelb.edu.au} 
\Addresses
                                                      
\end{document}